\newtheorem{lemma}{Lemma}
\newtheorem{corollary}{Corollary}
\newtheorem{theorem}{Theorem}
\newtheorem{remark}{Remark}
\newtheorem{example}{Example}
\begin{document}



\title{Policy Evaluation in Distributional LQR}

\author{Zifan Wang, Yulong Gao, Siyi Wang, Michael M. Zavlanos, Alessandro Abate, and Karl H. Johansson
\thanks{* This work was supported in part by Swedish Research Council Distinguished Professor Grant 2017-01078, Swedish Research Council International Postdoc Grant 2021-06727, Knut and Alice Wallenberg Foundation Wallenberg Scholar Grant, Swedish Strategic Research Foundation SUCCESS Grant FUS21-0026, AFOSR under award \#FA9550-19-1-0169, and  NSF under award CNS-1932011.} 
\thanks{Zifan Wang and Karl H. Johansson are with Division of Decision and Control Systems, School of Electrical Enginnering and Computer Science, KTH Royal Institute of Technology, and also with Digital Futures, SE-10044 Stockholm, Sweden. Email: \{zifanw,kallej\}@kth.se.}
\thanks{Yulong Gao and Alessandro Abate are with Department of Computer Science, University of Oxford, OX13QD Oxford, U.K.  Email: \{ yulong.gao, alessandro.abate\}@cs.ox.ac.uk} 
\thanks{Siyi Wang is with the School of Computation, Information and Technology, Technical University of Munich, 80333 Munich, Germany. Email: siyi.wang@tum.de}
\thanks{Michael M. Zavlanos is with the Department of Mechanical Engineering and Materials Science, Duke University, Durham, NC, USA. Email: michael.zavlanos@duke.edu}
}

\maketitle
\begin{abstract}%
Distributional reinforcement learning (DRL) enhances the understanding of the effects of the randomness  in the environment by letting agents learn the distribution of a random return, rather than its expected value as in standard reinforcement learning.  
Meanwhile, a challenge in DRL is that the policy evaluation typically relies on the representation of the return distribution, which needs to be carefully designed.
In this paper, we address this challenge for the special class of DRL problems that rely on a discounted linear quadratic regulator (LQR), which we call \emph{distributional LQR}. 
Specifically, we provide  a closed-form expression for the distribution of the random return, which is applicable for all types of exogenous disturbance as long as it is independent and identically distributed (i.i.d.).
We show that the variance of the random return is bounded if the fourth moment of the exogenous disturbance is bounded. Furthermore, we investigate the sensitivity of the return distribution to model perturbations.
While the proposed exact return distribution consists of infinitely many random variables, we show that this distribution can be well approximated by a finite number of random variables. The associated approximation error can be analytically bounded under mild assumptions. 
When the model is unknown, we propose a model-free approach for estimating the return distribution, supported by sample complexity guarantees.
Finally, we extend our approach to partially observable linear systems.
Numerical experiments are provided to illustrate the theoretical results.
\end{abstract}

\begin{IEEEkeywords}
  Distributional LQR, distributional RL, distribution sensitivity, policy evaluation, partially observable system
\end{IEEEkeywords}
\IEEEpeerreviewmaketitle

\section{Introduction}

In reinforcement learning (RL), the value of implementing a policy at a given state is captured by a value function, which models the expected sum of returns following this prescribed policy. 
Recently, \cite{bellemare2017distributional}  proposed the notion of distributional reinforcement learning (DRL), which learns the return distribution of a policy from a given state, instead of only its expected return. 
Compared to the scalar expected value function, the return distribution is infinite-dimensional and contains far more information. 
It is, therefore, not surprising that a few DRL algorithms, including C51 \cite{bellemare2017distributional}, D4PG \cite{barth2018distributed}, QR-DQN \cite{dabney2018distributional} and SDPG \cite{singh2022sample}, dramatically improve the empirical performance in practical applications over their non-distributional counterpart. 
By encompassing the entire distribution, DRL is able to provide a comprehensive framework, for instance, for risk-averse learning, facilitating a deeper understanding and more effective management of uncertainties \cite{kamran2021minimizing,zhang2022safe,liang2022bridging,min2019deep}.

In parallel with the celebrated Bellman equation in the traditional RL, an alternative random variable (or distributional) Bellman equation acts as the theoretical foundation of DRL.  It has been shown in \cite{bellemare2017distributional}
that the return distribution satisfies the distributional Bellman equation and the distributional Bellman operator is a contraction in (the maximum form of) the Wasserstein metric between probability distributions. A natural yet fundamental question in DRL is:
\begin{center}
   \emph{Given a policy, how to (exactly) characterise the random return that fulfils the random variable Bellman equation?}
\end{center}
The answer to this question provides the structural information of the return distribution, which enables a better understanding of the value of implementing a policy in the DRL setting. 

To the best of our knowledge, this problem has received limited attention.  One of the challenges is the computational intractability arising from the fact that the return distribution is in an infinite-dimensional space.  Approximations thus become necessary for practical implementation - cf. categorical \cite{bellemare2017distributional}, quantile function 
\cite{dabney2018distributional}, and sample-based \cite{singh2022sample} methods. Furthermore, although some recent efforts have been devoted to  applying DRL to partially observable systems \cite{xu2023decision}, no theoretical foundations, including the characterisation of the random return, have been built for these partially observable models.

In this paper, we solve the above problem for discrete-time linear systems with stochastic additive disturbances. More specifically, we characterise the random cost for the classical discounted linear quadratic regulator (LQR) problem, which we term \emph{distributional LQR}. We investigate the fundamental properties of the characterized random cost.  Furthermore, we explore the extension to  partially observable systems and derive fundamental properties of the characterized random cost.

\subsection{Related Work} 
The problem under investigation falls within the domain of policy evaluation in DRL, specifically focusing on predicting the full probability distribution.
This task poses a unique challenge because the full probability distribution is infinitely dimensional, necessitating the use of distribution parametrization techniques to render it computationally feasible.
Bellemare \textit{et al}. \cite{bellemare2017distributional} propose a categorical method that discretizes the return distribution by partitioning the return distribution into a finite number of uniformly spaced atoms in a fixed region. Subsequent work \cite{rowland2018analysis} delves into the convergence analysis of categorical policy evaluation and shows that the distributional projected Bellman operator with categorical representation is a contraction with respect to the Cram\'{e}r distance metric.
One drawback of the categorical representation is that it relies on prior knowledge of the range of the returned values. 
To address this limitation, \cite{dabney2018distributional} proposes a quantile temporal-difference learning algorithm that learns the quantiles of a probability distribution, and its convergence property is established in \cite{rowland2023analysis} using the Wasserstein-$\infty$ metric.
However, most of the existing algorithms and analysis of DRL are tailored to address problems with discrete state spaces, which cannot be applied to the linear quadratic control problem with continuous state space.
It is worth mentioning that the works \cite{singh2020improving,singh2022sample} investigate DRL with continuous state space and use a reparameterization method to represent the distribution of random variables through a neural network. 
Despite these significant advancements, there is no theoretical guarantee regarding the quality of the learned distributions in \cite{singh2020improving,singh2022sample}.
It still remains an open problem to derive an analytical expression for the return distribution with continuous state space.  
A challenge that further complicates the problem is represented by the infinitely many decision choices of states.

A related research line is the recent study of RL in the LQR context, which focuses on learning the expected return through interaction with the environment, see \cite{dean2020sample,tu2018least,fazel2018global,malik2019derivative,li2021distributed,yaghmaie2022linear,zheng2021sample}.
For example, \cite{fazel2018global} proposes a model-free policy gradient algorithm for LQR and shows its global convergence with finite polynomial computational and sample complexity.
Moreover, \cite{zheng2021sample} studies model-based RL for the linear quadratic Gaussian (LQG) problem, in which a model is first learnt from data and then used to design a policy. 
In this setup, evaluating the expected return for a policy is easily computed from the Riccati equation, but these methods are not capable of characterising other aspects of distributional information.
Works exploring the distributional information include risk-averse control \cite{van2015distributionally,tsiamis2021linear,kim2021distributional,chapman2021risk,chapman2022optimizing,chapman2021toward,chapman2022risk,kishida2022risk} or distributional robust control \cite{kim2023distributional,hakobyan2022wasserstein,yang2020wasserstein,tacskesen2023distributionally}. However, these methods cannot analyse the return distribution.

\subsection{ Contributions}
This paper aims at studying the return distribution for linear quadratic control problems.
Our contributions are summarised as follows:

\begin{enumerate}
    \item   We provide an analytical expression of the random return for distributional LQR and prove that this return function is a fixed-point solution to the random variable Bellman equation (Theorem~\ref{Thm:exact:dist}). Specifically, we show that the proposed analytical expression consists of infinitely many random variables and holds for arbitrary i.i.d. exogenous disturbances, e.g., non-Gaussian noise or noise with non-zero mean.  
     \item We analyse the variance of the random return and show that it is bounded if the fourth moment of the disturbances is bounded (Theorems~\ref{thm:variance}). Furthermore, we investigate the distributional sensitivity with respect to model perturbations. Under mild assumptions, we show that the maximal difference between the exact and perturbed return distributions can be bounded by the extent of model perturbations (Theorem~\ref{thm:perturb}).
    \item We develop an approximation of the distribution of the random return using a finite number of random variables when the model is known. We show that the maximal difference between the exact and  approximated return distributions decreases linearly with the number of random variables (Theorem~\ref{The:ranretromodelapprox}). In the model-free case, we approximate the return distribution using state trajectories. We show that, with high confidence, the distribution approximation error deceases linearly with respect to the trajectory length and sub-linearly with respect to the number of trajectories (Theorem~\ref{The:ranretromodelfreeapprox}). 
    \item Finally, we derive analytical evidence that most results for distributional LQR have corresponding counterparts for partially observable systems, including exact characterisation of the random return, variance bound, distributional sensitivity under perturbations, and distributional approximation using a finite number of random variables (Corollaries~\ref{The:LQGchara}--\ref{The:LQGapprox}). This provides insight into extending DRL to partially observable systems. 
\end{enumerate}

The work that comes closest to addressing the problems above is our prior work \cite{wang2023policy}:
the current contribution additionally analyses the variance of the random return and the distributional sensitivity with respect to model perturbations. Moreover, this work constructs a confidence bound on the distribution approximation error for the model-free case when the system matrices are unknown. Additionally, we newly derive corresponding counterparts for partially observable models. 

\subsection{Organisation and Notations}

The paper is organized as follows. In Section~\ref{sec:problem}, we provide background on LQR and define our problem. In Section~\ref{sec:LQR}, we provide the main results for distributional LQR, including the analytical expression of the random return, variance bound, distributional sensitivity under perturbations and model-based and model-free distribution approximations. Section~\ref{sec:LQG} provides the main results for partially observable linear systems. In Section~\ref{Sec:simulation}, we experimentally verify our theoretical results.
Finally, we conclude the paper in Section~\ref{sec:conclusion}.

\smallskip

We denote by $\mathbb{R}$ the set of real numbers and $\mathbb{N}$ the set of natural numbers. For a symmetric matrix $P$, the notation $P>0$ means that $P$ is positive definite. 
For a matrix $Q\in \mathbb{R}^{n\times n}$, we denote by $\left\| Q\right\|$ and $ \left\| Q\right\|_F$ the spectral norm and Frobenius norm, respectively. 
To indicate that two random variables $Z_1$ and $Z_2$ are equal in distribution, we use the notation $Z_1 \mathop{=}\limits^{D} Z_2$.
For a random variable $Z$, $\mathbb{E}[Z]$ denotes its expectation.

\vspace{-0.2cm}
\section{Problem Statement}\label{sec:problem}

Consider a discrete-time linear control system:
\begin{align*}
    x_{t+1} = A x_t + B u_t +v_t ,
\end{align*}
where $x_t \in \mathbb{R}^n$, $u_t\in \mathbb{R}^p$, and $v_t \in \mathbb{R}^n$ are the system state, control input, and the exogenous disturbance, respectively. 
We assume that the exogenous disturbances $v_t$ with bounded moments, $t\in \mathbb{N}$, are i.i.d. sampled from a distribution $ \mathcal{D}$ of arbitrary form.

\vspace{-0.2cm}
\subsection{Classical Discounted LQR}
The canonical  LQR problem aims to find a control policy $\pi: \mathbb{R}^n \rightarrow \mathbb{R}^p$ to minimise the objective
\begin{align*}
    J(u) 
    = \mathbb{E}\left[ \sum_{t=0}^{\infty} \gamma^t (x_t^{\rm{T}} Q x_t + u_t^{\rm{T}} R u_t)  \right],
\end{align*}
where $Q,R$ are positive-definite constant matrices and $\gamma \in (0,1)$ is a discount parameter. Given a control policy $\pi$, let $V^{\pi}(x) = \mathbb{E}\left[ \sum_{t=0}^{\infty} \gamma^k (x_t^{\rm{T}} Q x_t + u_t^{\rm{T}} R u_t)  \right]$ denote the expected return from an initial state $x_0 =x$ with $ u_t = \pi( x_t)$.
For the static linear policy $\pi(x_t)=K x_t$, the value function $V^{\pi}(x)$ satisfies the Bellman equation
\begin{align}\label{eq:Bellman expectation}
    V^{\pi}(x) 
    &= x^{\rm{T}} (Q + K^{\rm{T}} R K) x + \gamma \mathop{\mathbb{E}}_{x' = (A+BK)x+v_0 } [V^{\pi}(x')],  
\end{align} 
where the capital letter $x'$ denotes a random variable over which we take the expectation.

When the exogenous disturbance $v_t$ is normally distributed with zero mean, the value function is known to take the quadratic form  $V^{\pi}(x) = x^{\rm{T}} P x +q$, where $P>0$ is the solution of the Lyapunov equation $P = Q+ K^{\rm{T}} R K + \gamma A_K^{\rm{T}} P A_K$ and $q$ is a scalar related to the variance of $v_t$. In particular, the optimal control feedback gain is $K^*=-\gamma(R+\gamma B^{\rm{T}}PB)^{-1}PA$ and $P$ is the solution to the Riccati equation $P = \gamma A^{\rm{T}} P A - \gamma^2 A^{\rm{T}} P B (R+\gamma B^{\rm{T}} P B)^{-1} B^{\rm{T}} P A+Q$.

\subsection{Distributional LQR}
Motivated by the advantages of DRL in better understanding the effects of the randomness in the environment and in considering more general optimality criteria, in this paper we propose a distributional approach to the LQR problem.
Unlike classical RL, which relies on expected returns, DRL \cite{bdr2022} relies on the distribution of random returns, which is referred to \emph{return distribution}.
The return distribution characterises the probability distribution of different returns generated by a given policy and, as such, it contains much richer information on the performance of a given policy compared to the expected return.
In the context of LQR, we denote by $G^{\pi}(x)$ the random return using the static control strategy $u_t = \pi( x_t)$ from the initial state $x_0=x$, which is defined as
\begin{align}
    G^{\pi}(x) 
    = \sum_{t=0}^{\infty} \gamma^t (x_t^{\rm{T}} Q x_t + u_t^{\rm{T}} R u_t) , \nonumber \\
    u_t = \pi( x_t),x_0 = x.
\end{align}
It is straightforward to see that the expectation of $G^{\pi}(x)$ is equal to the value function $V^{\pi}(x)$. 
The standard Bellman equation in \eqref{eq:Bellman expectation} decomposes the long-term expected return into an immediate stage cost plus the expected return of future actions starting at the next step. 
Similarly, we can define the random variable Bellman equation for the random return as 
\begin{align}\label{eq:rv:Bellman}
    G^{\pi}(x) & \mathop{=}^{D} x^{\rm{T}}Qx+\pi(x)^{\rm{T}}R\pi(x) + \gamma G^{\pi}(x'), \nonumber \\
    & x' = Ax+B\pi(x)+v_0.
\end{align}
Here we use the notation $Z_1\mathop{=}\limits^{D} Z_2$ to denote that two random variables $Z_1,Z_2$ are equal in distribution. 
Compared to the expected return in LQR, which is a scalar, here the return distribution is infinite-dimensional. 
%

The following example is used to highlight the need of considering the random return.
\begin{example}
Consider the discrete-time scalar linear system $x_{t+1} = x_t + u_t +v_t$ and three different types of disturbance $v_t$: normal distribution $\mathcal{N}(0,1)$, uniform distribution $U[-\sqrt{3},\sqrt{3}]$, and multimodal distribution which is characterised by the probability density function (PDF) $(p_1(z)+p_2(z))/2$, where  $p_i(z)= \frac{1}{\sqrt{2\pi} \sigma_i} exp\big({-\frac{(z-\mu_i)^2}{2\sigma_i^2}}\big)$, $i=1,2$, with $\mu_1 = -0.99$, $\mu_2 = 0.99$, $\sigma_1=\sigma_2 =\sqrt{1-0.99^2} $. 
Their PDFs are shown in Fig.~\ref{fig:motivation}(a).
It can be verified that the mean of $v_t$ is zero and the variance is $1$ for all three types of disturbance. 
We set the initial state $x=1$, $Q=R=1$, and $\gamma=0.6$.
Then, the optimal controller for the three disturbances is the same and given by $u_t=-0.4684x_t$. The value function $V^{\pi}(x)$  in \eqref{eq:Bellman expectation} of implementing the optimal controller for the three disturbances is the same as well since the variance of $v_t$ is the same.

However, the distribution of the random return $G^{\pi}(x)$ varies significantly for the three disturbances, as shown in Fig.~\ref{fig:motivation}(b). 
We observe that the distributions of the random cost for the Gaussian and uniform disturbances are close to chi-square distributions, but the distribution for the multimodal disturbance exhibits multiple peaks.
Hence, the distribution of the random return contains more information than the LQR problem. From $G^{\pi}(x)$, it is possible, for instance, to evaluate the risk of incurring high costs, which is not possible to do from the value function $V^{\pi}(x)$.
\end{example}

\begin{figure}[t]
    \centering
    \begin{subfigure}[b]{0.24\textwidth}
        \centering
        \includegraphics[width=\textwidth]{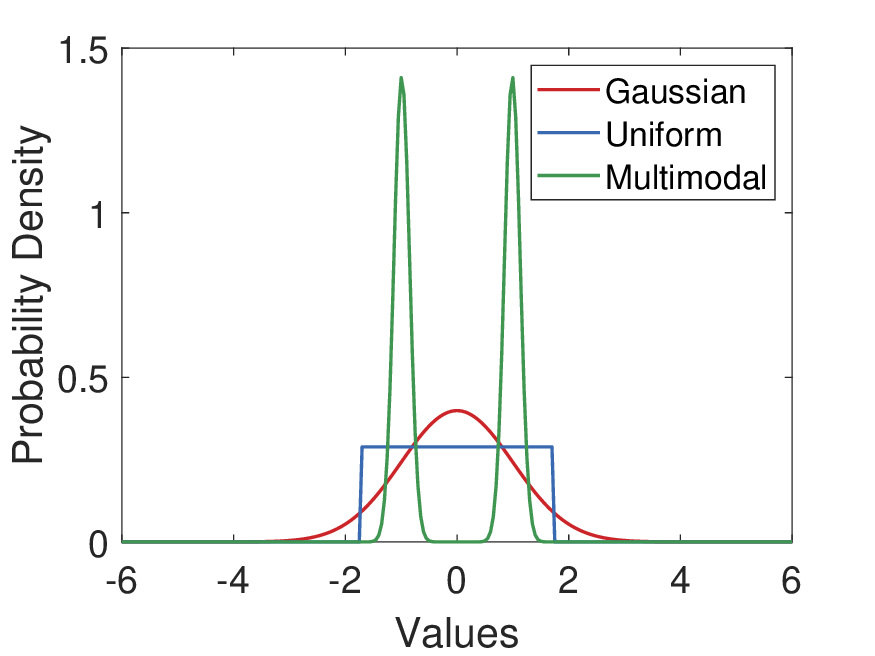}
        \caption{Probability density values of three types of disturbance $v_t$. \\}
        \label{fig:m:f1}
    \end{subfigure}
    \hfill
    \begin{subfigure}[b]{0.24\textwidth}
        \centering
        \includegraphics[width=\textwidth]{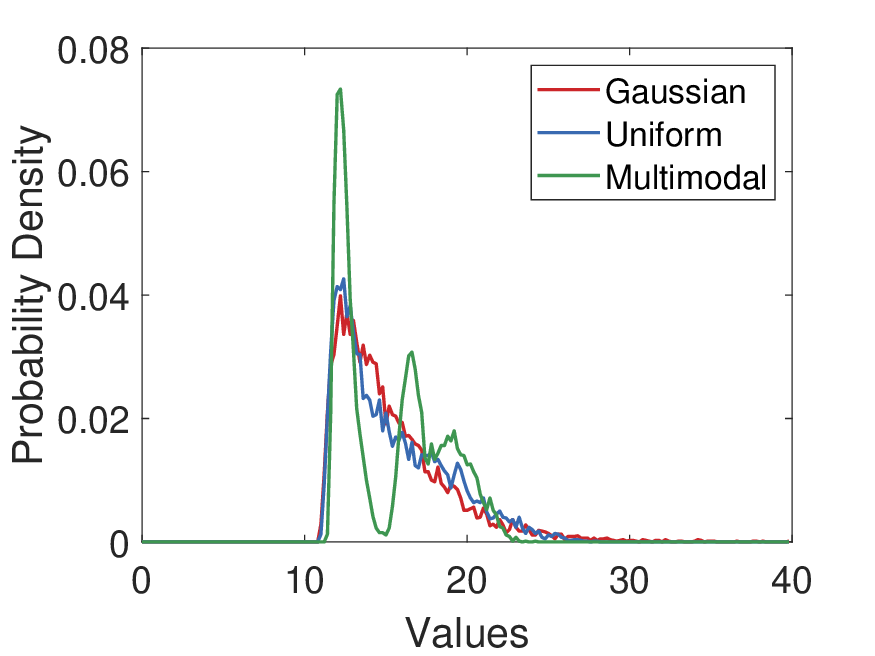}
        \caption{Probability density values of the random cost $G^{\pi}(x)$ induced by three disturbances.}
        \label{fig:m:f2}
    \end{subfigure}
    \caption{The PDFs of three types of disturbance and of their corresponding random costs in LQR. The PDFs of the random costs are generated by Algorithm~\ref{alg:algorithm_MFPE} in this paper. } \label{fig:motivation}
\end{figure}

This paper addresses the following research problems. We first analytically characterise the random return that fulfils the random variable Bellman equation for LQR. Subsequently, we explore the fundamental properties of the random return and its distribution: variance bound, distributional sensitivity under perturbations, and model-based and model-free distribution approximations. Finally, we extend our investigation to encompass partially observable linear systems.

\section{Main Results on Distributional LQR}\label{sec:LQR}
This section focuses on the random return for the LQR problem.
\subsection{Characterisation of the Random Return}
In this section, we precisely characterise the distribution of the random return  that satisfies the distributional  Bellman equation \eqref{eq:rv:Bellman}. 
Given a static linear policy $\pi(x_t)=K x_t$, we denote by $G^K(x)$ the random return $G^{\pi}(x)$  under the policy $\pi(x_t)$ from the initial state $x_0=x$ , which is defined as
\begin{align}\label{eq:Gk:def:original}
    G^{K}(x) =  \sum_{t=0}^{\infty} \gamma^t x_t^{\rm{T}} (Q+K^{\rm{T}} R K) x_t  , \quad x_0 = x.
\end{align}
The random return  $G^{K}(x)$ satisfies the following random variable Bellman equation  
\begin{align}\label{eq:rv:bellman}
    G^{K}(x) & \mathop{=}^{D} x^{\rm{T}}Q_Kx + \gamma G^{K}(x'),\quad x'=A_K x+ v_0,
\end{align}
where $A_K:=A+BK$ and $Q_K := Q+ K^{\rm{T}} R K$. 
In the following theorem, we provide an explicit expression of the random return $G^K(x)$. The proof can be found in Appendix~\ref{app:thm:exact:dist}.

\begin{theorem}\label{Thm:exact:dist}
Suppose that the feedback gain  $K$ is stabilizing, i.e., $A_K=A+BK$ is stable. Let  \begin{align}\label{eq:dist_func}
    &G^{K}(x) = x^{\rm{T}} P x  +  2 \sum_{k = 0 }^{\infty} \gamma^{k+1} w_k^{\rm{T}} P A_K^{k+1}x \nonumber \\
    &+  \sum_{k = 0 }^{\infty}  \gamma^{k+1}  w_k^{\rm{T}} P w_k+ 2  \sum_{k = 1 }^{\infty} \gamma^{k+1} w_k^{\rm{T}} P \sum_{\tau=0}^{k-1} A_K^{k-\tau}w_{\tau},
\end{align}
where  $P$ is obtained from the Lyapunov equation $P = Q+ K^{\rm{T}} R K + \gamma A_K^{\rm{T}} P A_K$, and the random variables $w_k \sim \mathcal{D} $ are independent from each other for all $k\in\mathbb{N}$. Then, the random variable $G^{K}(x)$ defined in 
    \eqref{eq:dist_func} is a fixed point solution to the random variable Bellman equation \eqref{eq:rv:bellman}.
\end{theorem}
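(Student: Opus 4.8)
The plan is to verify directly that the proposed $G^{K}(x)$ satisfies the distributional fixed-point identity \eqref{eq:rv:bellman} by substitution, rather than to re-derive the expression from scratch. The structural fact I would rely on is that the Lyapunov equation $P = Q_K + \gamma A_K^{\rm{T}} P A_K$ iterates to $P = \sum_{t=0}^{\infty}\gamma^t (A_K^{\rm{T}})^t Q_K A_K^t$; stability of $A_K$ (so that $\|A_K^k\|$ decays geometrically) together with $\gamma\in(0,1)$ and the bounded-moment assumption on $\mathcal{D}$ guarantees that $P$ is well defined and that every series in \eqref{eq:dist_func} converges (e.g.\ in $L^2$), which in turn licenses the re-indexing of the infinite sums performed below.

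The key idea is a coupling that exploits the i.i.d.\ structure of the noise. On the right-hand side of \eqref{eq:rv:bellman}, write $x' = A_K x + v_0$ with $v_0\sim\mathcal{D}$ and build $G^{K}(x')$ from a fresh i.i.d.\ sequence $\{w_k'\}_{k\ge 0}$ that is independent of $v_0$. Since $v_0, w_0', w_1',\dots$ are i.i.d.\ $\mathcal{D}$, the sequence $\{w_k\}_{k\ge 0}$ appearing in $G^{K}(x)$ has exactly the same joint law as the shifted family given by $w_0 = v_0$ and $w_{k+1} = w_k'$ for $k\ge 0$. I would therefore substitute this identification into expression \eqref{eq:dist_func} and show that, as an algebraic identity between these coupled random variables, the result equals $x^{\rm{T}} Q_K x + \gamma G^{K}(x')$; the desired equality in distribution follows immediately.

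Carrying out the substitution, I would shift the summation index by one in each of the four terms of \eqref{eq:dist_func}. The term $x^{\rm{T}} P x$ is untouched, while the linear-in-$x$ term, the diagonal quadratic term $\sum_k \gamma^{k+1} w_k^{\rm{T}} P w_k$, and the cross term each split into a $k=0$ contribution involving $v_0$ and a remainder indexed by $\{w_k'\}$ with both the discount power and the power of $A_K$ raised by one. On the other side I would expand $\gamma (x')^{\rm{T}} P x' = \gamma x^{\rm{T}} A_K^{\rm{T}} P A_K x + 2\gamma v_0^{\rm{T}} P A_K x + \gamma v_0^{\rm{T}} P v_0$ and apply the Lyapunov equation in the form $\gamma A_K^{\rm{T}} P A_K = P - Q_K$ to collapse $x^{\rm{T}} Q_K x + \gamma x^{\rm{T}} A_K^{\rm{T}} P A_K x$ into $x^{\rm{T}} P x$. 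Matching the remaining pieces term by term against the shifted expansion then completes the verification.

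I expect the main obstacle to be the cross term $2\sum_{k\ge 1}\gamma^{k+1} w_k^{\rm{T}} P \sum_{\tau=0}^{k-1} A_K^{k-\tau} w_\tau$, whose re-indexing is the most delicate. Under the identification $w_0 = v_0$, the $\tau=0$ summand of the inner sum produces a genuinely new family of $v_0$–$w_k'$ cross terms, and these must be shown to coincide exactly with the $v_0$-part of the linear-in-$x'$ contribution $2\sum_k \gamma^{k+1}(w_k')^{\rm{T}} P A_K^{k+1} x'$ generated on the right-hand side by $x' = A_K x + v_0$, while the $\tau\ge 1$ summands reproduce the cross term at the shifted indices. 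Keeping the index bookkeeping consistent here, and confirming that the absolute convergence established at the outset justifies each rearrangement, is where care is needed; the remaining algebra is routine. Note that I do not need uniqueness for the stated claim, since the theorem only asserts that \eqref{eq:dist_func} is \emph{a} fixed point.
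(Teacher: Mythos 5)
Your proposal is correct and takes essentially the same route as the paper's proof: the paper also verifies the fixed-point property by direct substitution, using exactly your shift identification (it defines $\xi_0 := v_0$, $\xi_t := w_{t-1}$ for $t\geq 1$), re-indexes the three noise-dependent sums, and collapses $x^{\rm{T}}Q_K x + \gamma x^{\rm{T}}A_K^{\rm{T}} P A_K x$ into $x^{\rm{T}} P x$ via the Lyapunov equation before invoking the i.i.d.\ property to pass to equality in distribution. The only cosmetic difference is direction — the paper expands the right-hand side $x^{\rm{T}}Q_K x + \gamma G^{K}(x')$ and shows it has the form \eqref{eq:dist_func} in the relabeled noise, whereas you couple the noise on the left-hand side and match it to the right — which is the same computation read in reverse.
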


We note that the expression of the random return is meaningful only when the system is stable. When ensuring stability, this analytical expression applies to arbitrary exogenous disturbances including non-Gaussian, uniform noises and noises with non-zero means, as long as the disturbances are i.i.d..

If we assume $\mathbb{E}[w_k]=0$, $\mathbb{E}[w_k w_k^{\rm{T}}] = \sigma^2 I$, and the disturbances $w_k$ are i.i.d., we have that the expected value of the random return is $x^{\rm{T}} P x + \sigma^2 \frac{\gamma}{1-\gamma} {\rm{Tr}}(P)$, which aligns with the classical result in LQR. This observation to some degree validates our characterisation of the random return.

\begin{remark}
Recall that the PDF of the sum of two independent random variables is the convolution of their two PDFs.
Computing the accurate probability distribution function of $G^{K}(x)$ in \eqref{eq:dist_func} is a challenging task due to the potential need for an infinite number of convolution operations.
However, we can discuss the approximate shape of this distribution under different conditions.
Suppose that the random variable $w_k$ follows a normal distribution.
When the initial state is significantly large, the random variable $\sum_{k = 0 }^{\infty} \gamma^{k+1} w_k^{\rm{T}} P A_K^{k+1}x$ dominates the random return.
This sum follows a Gaussian distribution, and as a result, the overall distribution of $G^{K}(x)$ tends to resemble a Gaussian distribution.
Conversely, when the value of $x$ is small, the term $\sum_{k = 0 }^{\infty} \gamma^{k+1} w_k^{\rm{T}} P w_k$ becomes dominant. This sum follows a chi-square distribution, and consequently, the entire distribution of $G^{K}(x)$ takes on a chi-square-like shape. More details can be found in Section~\ref{Sec:simulation}.
\end{remark}

\subsection{Bounded Variance of the Random Return} 
In this section, we analyse the variance of the random return $G^K(x)$, which is presented in the following theorem. The proof can be found in Appendix~\ref{app:thm:variance}.
\begin{theorem}\label{thm:variance}
Assume that $\mathbb{E}[w_k]=0$ and $\mathbb{E}[\left\|w_k \right\|^4] \leq \sigma_4^4$, for all $k\in\mathbb{N}$.
Suppose that the feedback gain $K$ is stabilizing such that $\left\|A_K\right\| = \rho_K <1$.
Then, the variance of the random variable $G^K(x)$ is bounded.
\end{theorem}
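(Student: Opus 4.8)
The plan is to exploit the additive structure of the closed-form return in \eqref{eq:dist_func}. Since $x^{\rm{T}}Px$ is deterministic it contributes nothing to the variance, so I would write $G^K(x) = x^{\rm{T}}Px + S_1 + S_2 + S_3$, where $S_1 = 2\sum_{k\geq 0}\gamma^{k+1} w_k^{\rm{T}} P A_K^{k+1} x$ is the linear term, $S_2 = \sum_{k\geq 0}\gamma^{k+1} w_k^{\rm{T}} P w_k$ the diagonal quadratic term, and $S_3 = 2\sum_{k\geq 1}\gamma^{k+1} w_k^{\rm{T}} P\sum_{\tau=0}^{k-1}A_K^{k-\tau}w_\tau$ the cross term. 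Applying the elementary bound $(a+b+c)^2 \leq 3(a^2+b^2+c^2)$ to the centred variables gives ${\rm{Var}}(G^K(x)) \leq 3\big({\rm{Var}}(S_1)+{\rm{Var}}(S_2)+{\rm{Var}}(S_3)\big)$, so it suffices to show each of the three variances is finite.

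For $S_1$ and $S_2$ the summands are mutually independent across $k$ (and zero-mean for $S_1$), so the variance of the sum equals the sum of per-term variances. For $S_1$ I would bound ${\rm{Var}}(w_k^{\rm{T}} P A_K^{k+1}x) = (PA_K^{k+1}x)^{\rm{T}}\mathbb{E}[w_kw_k^{\rm{T}}](PA_K^{k+1}x) \leq \sigma_4^2 \|P\|^2 \rho_K^{2(k+1)}\|x\|^2$, using $\mathbb{E}[\|w_k\|^2]\leq \sigma_4^2$ (Jensen) and $\|A_K^{k+1}\|\leq\rho_K^{k+1}$; summing the geometric series in $\gamma^2\rho_K^2 < 1$ gives a finite bound. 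For $S_2$ I would use ${\rm{Var}}(w_k^{\rm{T}} P w_k)\leq \mathbb{E}[(w_k^{\rm{T}} P w_k)^2]\leq \|P\|^2\mathbb{E}[\|w_k\|^4]\leq \|P\|^2\sigma_4^4$ and sum the geometric series in $\gamma^2$. This is the first place the fourth-moment hypothesis is genuinely needed.

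The crux is $S_3$. I would first argue that its summands $Y_k := w_k^{\rm{T}} P\sum_{\tau=0}^{k-1}A_K^{k-\tau}w_\tau$ are zero-mean and pairwise uncorrelated: for $k<k'$ the factor $w_{k'}$ enters $Y_{k'}$ linearly and is independent of every variable indexed $\leq k < k'$, so $\mathbb{E}[Y_kY_{k'}]=0$. Hence ${\rm{Var}}(S_3)=4\sum_{k\geq 1}\gamma^{2(k+1)}\mathbb{E}[Y_k^2]$. The same independence argument annihilates the off-diagonal ($\tau\neq\tau'$) contributions within each $Y_k$, leaving $\mathbb{E}[Y_k^2]=\sum_{\tau=0}^{k-1}\mathbb{E}[(w_k^{\rm{T}} P A_K^{k-\tau}w_\tau)^2]$; conditioning on $w_\tau$ and using $\|\mathbb{E}[w_kw_k^{\rm{T}}]\|\leq\sigma_4^2$ together with $\mathbb{E}[\|w_\tau\|^2]\leq\sigma_4^2$ yields $\mathbb{E}[(w_k^{\rm{T}} P A_K^{k-\tau}w_\tau)^2]\leq \sigma_4^4\|P\|^2\rho_K^{2(k-\tau)}$. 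Summing the geometric series in $\rho_K^2$ over $\tau$ bounds $\mathbb{E}[Y_k^2]$ by the \emph{$k$-independent} constant $\sigma_4^4\|P\|^2\rho_K^2/(1-\rho_K^2)$, after which summing $\gamma^{2(k+1)}$ over $k$ gives ${\rm{Var}}(S_3)<\infty$.

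I expect the cross term $S_3$ to be the main obstacle: squaring it produces fourth-order mixtures of the form $w_kw_kw_\tau w_\tau$, which is exactly why only a fourth-moment bound, rather than a second-moment bound, suffices, and one must track carefully which summands are uncorrelated so that the double sum collapses to a convergent single geometric series. A secondary technical point, which I would handle by a standard Cauchy-sequence argument, is justifying the interchange of expectation and the infinite sums; the finite per-term variance bounds established above show that the partial sums of each $S_i$ form a Cauchy sequence in $L^2$, so the limits are genuine $L^2$ random variables and all the term-by-term variance computations are legitimate.
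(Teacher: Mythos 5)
Your proposal is correct, and it establishes the theorem with the same essential ingredients as the paper: the decomposition of $G^K(x)$ into the four terms of \eqref{eq:dist_func}, the bounds $\|A_K^k\|\le\rho_K^k$, the Jensen-derived moment bounds $\mathbb{E}[\|w_k\|^2]\le\sigma_4^2$, and---crucially---the observation that independence plus zero mean annihilates almost all mixed fourth-order terms in the cross sum, leaving only diagonal contributions controlled by the fourth moment. The difference is in the bookkeeping. The paper bounds the second moment $\mathbb{E}[G^K(x)^2]$ via $(a+b+c+d)^2\le 4(a^2+b^2+c^2+d^2)$ and then expands each squared series with the Cauchy product $\bigl(\sum_k a_k\bigr)^2=\sum_k\sum_{l=0}^k a_l a_{k-l}$, taking absolute values inside for the linear and diagonal-quadratic terms (so no cancellation is used there) and invoking zero-mean cancellation only for the cross term, where the Cauchy-product index collapses to $k=2l$. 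You instead work with the variance directly and exploit orthogonality throughout: independence makes the variance of $S_1$ and $S_2$ additive across $k$, and your pairwise-uncorrelatedness argument for the $Y_k$ (together with the vanishing of $\tau\neq\tau'$ terms inside each $Y_k$) is exactly the same cancellation the paper obtains through the Cauchy product, just expressed without that device. Your route buys slightly tighter constants (e.g., a geometric series in $\gamma^2\rho_K^2$ rather than the paper's $(1-\gamma\rho_K)^{-2}$ for the linear term, since you exploit cancellation there too) and a cleaner identification of where the fourth moment is genuinely needed; it also addresses a point the paper passes over silently, namely the $L^2$-convergence of the partial sums that legitimizes term-by-term variance computations for the infinite series. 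The paper's route, conversely, needs no discussion of uncorrelatedness for the first two terms because it brute-forces them with absolute values, at the cost of heavier series manipulations.
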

Although $G^K(x)$ in \eqref{eq:dist_func} is composed of infinitely many random variables, Theorem~\ref{thm:variance} shows that its variance is bounded if the fourth moment of the disturbance is bounded. 
The fourth moment qualitatively is a measure of the tail of a probability distribution. To ensure a finite variance for the random cost $G^K(x)$, 
we thus require that the tail of the disturbance distribution is not heavy. This condition seems indispensable, since $G^K(x)$ includes a term $w_k^{\rm{T}} P w_k$.

\subsection{Sensitivity Analysis of the Return Distribution} 

In this section, we investigate how perturbations on matrices influence the distribution of the random return $G^{K}(x)$. 
Suppose that we perturb the matrices $A,B$ by an amount $\Delta A$, $\Delta B$, respectively. 
Let
\begin{align*}
    &\tilde{A}=A+\Delta A, \quad \tilde{B}=B + \Delta B, \quad A_K = A+BK,\nonumber \\
    &  \tilde{A}_K = \tilde{A} + \tilde{B} K, \quad \Delta A_K = \tilde{A}_K - A_K,
\end{align*}
and let $P$ and $\tilde{P}$ be the solutions to 
\begin{align*}
    & P - Q-K^{\rm{T}}R K = \gamma A_K^{\rm{T}} P A_K, \\
    &\tilde{P}- Q-K^{\rm{T}}R K = \gamma \tilde{A}_K^{\rm{T}} \tilde{P} \tilde{A}_K,
\end{align*}
respectively.
With the introduction of perturbations on matrices, we define the perturbed random variable 
\begin{align}\label{eq:dist_func_perturbed}
    &\tilde{G}^{K}(x) = x^{\rm{T}} \tilde{P} x +  2 \sum_{k = 0 }^{\infty} \gamma^{k+1} w_k^{\rm{T}} \tilde{P} \tilde{A}_K^{k+1}x\nonumber \\
    & +  \sum_{k = 0}^{\infty}  \gamma^{k+1}  w_k^{\rm{T}} \tilde{P} w_k 
    + 2  \sum_{k = 1 }^{\infty} \gamma^{k+1} w_k^{\rm{T}} \tilde{P} \sum_{\tau=0}^{k-1} \tilde{A}_K^{k-\tau}w_{\tau}.
\end{align}
Let $F^{K}_x$ and $\tilde{F}^{K}_{x}$ denote the cumulative distribution function (CDF) of $G^{K}(x)$ and $\tilde{G}^{K}(x)$, respectively.
In the following theorem, we show that the sup difference between $F^{K}_x$ and $\tilde{F}^{K}_{x}$ is bounded when the perturbation is reasonably small.
The proof can be found in Appendix~\ref{app:thm:perturb}.
\begin{theorem}\label{thm:perturb}
Assume that the PDF of $w_k$ is bounded, and satisfies $\mathbb{E}[w_k^{\rm{T}} w_k] \leq \sigma^2$, for all $k\in \mathbb{N}$.
Suppose that the feedback gain $K$ is stabilizing such that $ \max \{ \big\|A_K\big\|,\big\| \tilde{A}_K\big\|\} =  \rho_K <1$.
Suppose that $l>2 \epsilon$, where $l = \left\| H^{-1}\right\|^{-1}$, $H = I \otimes I - \gamma A_K^{\rm{T}} \otimes A_K^{\rm{T}}$ and $\epsilon = \gamma \left\| A_K \right\|_F \left\| \Delta A_K \right\|_F + \frac{\gamma}{2} \left\| \Delta A_K \right\|_F^2$. Then, we have
\begin{align}\label{eq:pert:bound}
     &\sup_{z}|{F}^{K}_x(z)-\tilde{F}^{K}_{x}(z) | \leq \tilde{c}_1 \left\|\Delta A_K \right\| + \tilde{c}_2 \left\| \Delta A_K \right\|^2,
\end{align}
where the constants $\tilde{c}_1, \tilde{c}_2$ (made explicit in the proof) depend on the system matrices, the initial state value $x$, and the parameters $\gamma,\rho_K,\sigma$.
\end{theorem}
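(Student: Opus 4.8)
The plan is to build a single coupling in which $G^{K}(x)$ and $\tilde G^{K}(x)$ are driven by the \emph{same} disturbance sequence $\{w_k\}$, so that their difference $D := \tilde G^{K}(x) - G^{K}(x)$ lives on one probability space, and then to convert control of $D$ into a uniform bound on the CDF gap $\sup_z|F^{K}_x(z)-\tilde F^{K}_x(z)|$. I would interpolate along the segment $A_K(s) = A_K + s\,\Delta A_K$, $s\in[0,1]$, with $P(s)$ the associated Lyapunov solution, so that $s=0,1$ recover the nominal and perturbed returns and the whole problem becomes one of tracking how the return distribution moves as the matrices vary by $O(\|\Delta A_K\|)$. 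Three ingredients are needed: (i) a perturbation estimate for $\tilde P - P$; (ii) a moment bound on $D$ that is linear-plus-quadratic in $\|\Delta A_K\|$; and (iii) an anti-concentration (bounded-density) property of the return, where the bounded-PDF assumption enters.

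For (i) I would vectorise both Lyapunov equations. Since $Q_K = Q + K^{\rm T}RK$ is unchanged by the perturbation, $\mathrm{vec}(P)$ and $\mathrm{vec}(\tilde P)$ solve $H\,\mathrm{vec}(P) = \tilde H\,\mathrm{vec}(\tilde P) = \mathrm{vec}(Q_K)$ with $H = I\otimes I - \gamma A_K^{\rm T}\otimes A_K^{\rm T}$ and $\tilde H = I\otimes I - \gamma \tilde A_K^{\rm T}\otimes \tilde A_K^{\rm T}$. Expanding $\tilde A_K = A_K + \Delta A_K$ gives $\|\tilde H - H\| \le \gamma\big(2\|A_K\|_F\|\Delta A_K\|_F + \|\Delta A_K\|_F^2\big) = 2\epsilon$, so the hypothesis $l > 2\epsilon$ (with $l = \|H^{-1}\|^{-1}$) makes $\tilde H$ invertible with $\|\tilde H^{-1}\| \le (l-2\epsilon)^{-1}$ by a Neumann argument; then $\mathrm{vec}(\tilde P) - \mathrm{vec}(P) = -\tilde H^{-1}(\tilde H - H)\mathrm{vec}(P)$ yields $\|\tilde P - P\|_F \le \tfrac{2\epsilon}{\,l-2\epsilon\,}\|P\|_F$, which is $O(\|\Delta A_K\|)$ and already carries the linear-plus-quadratic structure through $\epsilon$. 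I would pair this with the telescoping bound $\|\tilde A_K^{k}-A_K^{k}\| \le k\rho_K^{k-1}\|\Delta A_K\|$, whose geometric weight $\gamma^{k+1}$ in \eqref{eq:dist_func} guarantees summability since $\gamma<1$ and $\rho_K<1$.

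For (ii) I would subtract \eqref{eq:dist_func} from \eqref{eq:dist_func_perturbed} group by group: each term differs only through $\tilde P - P$ and $\tilde A_K^{k}-A_K^{k}$. Using the step-(i) estimates, the independence of the $w_k$, and $\mathbb E[w_k^{\rm T}w_k]\le\sigma^2$ to bound the quadratic terms $w_k^{\rm T}(\cdot)w_k$ and the cross terms $w_k^{\rm T}(\cdot)w_\tau$ in expectation, the convergent geometric series in $\gamma,\rho_K$ give $\mathbb E|D| \le a_1\|\Delta A_K\| + a_2\|\Delta A_K\|^2$ with explicit $a_1,a_2$ depending on $x,\gamma,\rho_K,\sigma$ and the system matrices.

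The decisive step is (iii): upgrading control of $D$ to a uniform CDF bound with a \emph{linear} (not merely sub-linear) rate. The generic coupling inequality $\sup_z|F^{K}_x(z)-\tilde F^{K}_x(z)| \le L\delta + \mathbb P(|D|>\delta)$, with $L$ a density bound for $G^{K}(x)$, only yields a $\sqrt{\|\Delta A_K\|}$ rate after optimising $\delta$ via Markov/Chebyshev, so the linear rate must exploit that the perturbation acts as a \emph{smooth reparametrisation driven by the same noise}, not an arbitrary small move in distribution. Concretely, I would differentiate $s\mapsto F^{K}_x(z;A_K(s),P(s))$ along the interpolation and factor the derivative through the return's density, $\tfrac{d}{ds}F^{K}_x(z) = -f_{Y_s}(z)\,\mathbb E\!\left[\dot\phi_s \mid Y_s=z\right]$, where $Y_s$ is the return at parameter $s$ and $\dot\phi_s = O(\|\Delta A_K\|)$ is its pathwise derivative; integrating in $s$ bounds the gap by $\sup_{s,z} f_{Y_s}(z)\,\mathbb E[|\dot\phi_s|\mid Y_s=z]$ times $\|\Delta A_K\|$, the $\|\Delta A_K\|^2$ term arising from the curvature of the path (equivalently the quadratic part of $\epsilon$). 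The main obstacle is precisely to show this density-weighted conditional quantity is finite and uniform along the path: I would first establish that $G^{K}(x)$ has a bounded density $L$ by isolating, through the convolution structure, an independent smoothing contribution built from the $w_k$ (whose PDFs are assumed bounded), and then argue that $z\mapsto\mathbb E[\,|\dot\phi_s|\,\mathbf 1\{Y_s\le z\}\,]$ is Lipschitz uniformly in $s$. Making this rigorous — interchanging differentiation with expectation through an indicator, and taming the conditional expectation in the tails where the density decays — is the crux; the moment and bounded-PDF hypotheses are exactly what close it, and reading off the explicit $\tilde c_1,\tilde c_2$ then completes the proof.
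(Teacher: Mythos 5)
Your steps (i) and (ii) coincide with the paper's proof. The paper obtains the Lyapunov sensitivity bound $\left\| \tilde{P} - P\right\|_F \leq \frac{2\left\| P\right\|_F \epsilon}{l-2\epsilon}$ by citing a known result (Lemma~\ref{lemma:perturbation_bound}) rather than re-deriving it via a Neumann series, but the content is identical to your vectorisation argument. It then defines $\tilde{Y}:= G^K(x)-\tilde{G}^K(x)$ under the same-noise coupling and bounds $\mathbb{E}\big[|\tilde{Y}|\big]$ term by term using exactly your decompositions: $\tilde{P}\tilde{A}_K^{k+1} - P A_K^{k+1} = (\tilde{P}-P)\tilde{A}_K^{k+1} + P(\tilde{A}_K^{k+1}-A_K^{k+1})$, the telescoping estimate on $\tilde{A}_K^{k}-A_K^{k}$, independence of the $w_k$, and geometric series in $\gamma,\rho_K$, arriving at $\mathbb{E}\big[|\tilde{Y}|\big] \leq \tilde{c}_3 \left\|\tilde{P}-P \right\| + \tilde{c}_4 \left\|\Delta A_K \right\|$.

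The genuine gap is your step (iii): as submitted it is a program, not a proof. The pathwise-derivative identity $\frac{d}{ds}F_s(z) = -f_{Y_s}(z)\,\mathbb{E}[\dot\phi_s \mid Y_s = z]$, the interchange of differentiation and expectation through an indicator, and above all a uniform-in-$(s,z)$ bound on the density-weighted conditional expectation are precisely what would need to be proved, and you explicitly leave them open, so the theorem is not established by your argument. For comparison, the paper closes this step in a few lines: it writes $\mathbb{P}(\tilde{G}^{K}(x)+\tilde{Y}\leq z) = \int_{-\infty}^{\infty}\mathbb{P}(\tilde{G}^{K}(x)\leq z-t)\,\mathbb{P}(\tilde{Y}=t)\,dt$ and applies the mean value theorem to conclude $\sup_z |F^K_x(z)-\tilde{F}^K_x(z)| \leq \tilde{f}_{\max}\,\mathbb{E}\big[|\tilde{Y}|\big]$, which gives the linear rate immediately. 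Note, however, that this factorisation is a convolution identity: it treats $\tilde{Y}$ as independent of $\tilde{G}^K(x)$, which is false under the same-noise coupling (both are functions of the same $\{w_k\}$; for instance $w_k^{\rm{T}}(P-\tilde{P})w_k$ and $w_k^{\rm{T}}\tilde{P}w_k$ are correlated). So your diagnosis of where the difficulty lies is exactly right — a rigorous coupling argument that ignores the dependence structure yields only an $O(\sqrt{\left\|\Delta A_K\right\|})$ rate, and the paper reaches the linear rate through an implicit independence assumption rather than by resolving the issue you identified. Your instinct is more careful than the paper's own treatment, but your proposal does not close the gap either.
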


Traditional sensitivity analysis investigates the impact of perturbations on solutions to the Lyapunov equation, see, e.g., \cite{laub1990sensitivity}. Building on this result, Theorem~\ref{thm:perturb} shows that we can also bound changes in the perturbed return distribution.

%

\subsection{Model-Based Approximation of the Return Distribution}\label{Sec:Approxreturn}
The expression of the random return $G^{K}(x)$ defined in \eqref{eq:dist_func} is composed of infinitely many random variables.
In this section, we investigate how to approximate the distribution of this random return  using a finite number of random variables. A natural idea is to consider only the first $N$ terms in the summations in the expression  \eqref{eq:dist_func}  and disregard the terms for $k$ larger than $N$, which yields the following:   
\begin{align}\label{eq:approxreturn}
    &{G}^{K}_N(x) =  x^{\rm{T}} P x  +  2 \sum_{k = 0 }^{N-1} \gamma^{k+1} w_k^{\rm{T}} P A_K^{k+1}x \nonumber \\
    &+  \sum_{k = 0 }^{N-1}  \gamma^{k+1}  w_k^{\rm{T}} P w_k + 
 2 \sum_{k = 1 }^{N-1} \gamma^{k+1} w_k^{\rm{T}} P \sum_{\tau=0}^{k-1} A_K^{k-\tau}w_{\tau}.
\end{align}
Let ${F}^{K}_{x,N}$ denote the CDF of ${G}_N^{K}(x)$. The following theorem provides an upper bound on the difference between $F^{K}_x$ and ${F}^{K}_{x,N}$, and shows that  the sequence $\{{G}^{K}_N(x)\}_{N\in \mathbb{N}}$ converges pointwise in distribution to $G^{K}(x)$,  $\forall x\in\mathbb{R}^n$. 
The proof can be found in Appendix~\ref{app:thm:modelappro}.

\begin{theorem}\label{The:ranretromodelapprox}
Assume that the PDF of $w_k$ is bounded, and satisfies $\mathbb{E}[w_k^{\rm{T}} w_k] \leq \sigma^2$, for all $k\in \mathbb{N}$.
Suppose that the feedback gain  $K$ is stabilizing such that $\left\|A_K\right\| = \rho_K <1$. Then, the sup difference between the CDFs $F^K_x$ and ${F}^{K}_{x,N}$ is bounded by
\begin{align}\label{eq:approximat:bound}
     \sup_{z}|{F}^{K}_x(z)-{F}^{K}_{x,N}(z) | \leq c_0 \gamma^N,
\end{align}
where $c_0$ is a constant (again, made explicit in the proof) that depends on the system matrices, the initial state value $x$, and the parameters $\gamma, \rho_K, \sigma$. 
\end{theorem}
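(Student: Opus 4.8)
The plan is to realise $G^{K}(x)$ and its truncation $G^{K}_N(x)$ on one probability space by driving both with the same noise sequence $\{w_k\}$, so that subtracting \eqref{eq:approxreturn} from \eqref{eq:dist_func} exhibits their difference as the explicit tail
\[
R_N := G^{K}(x)-G^{K}_N(x) = 2\!\sum_{k\ge N}\gamma^{k+1} w_k^{\rm T} P A_K^{k+1} x + \sum_{k\ge N}\gamma^{k+1} w_k^{\rm T} P w_k + 2\!\sum_{k\ge N}\gamma^{k+1} w_k^{\rm T} P \sum_{\tau=0}^{k-1} A_K^{k-\tau} w_\tau .
\]
The elementary fact I would build on is that, under this coupling and for every $z$,
\[
|{F}^{K}_x(z)-{F}^{K}_{x,N}(z)| = \big|\mathbb{E}\big[\mathbf{1}\{G^{K}_N(x)+R_N\le z\}-\mathbf{1}\{G^{K}_N(x)\le z\}\big]\big| \le \mathbb{P}\big(z-|R_N|\le G^{K}_N(x)\le z+|R_N|\big),
\]
since the two indicators differ only when $G^K_N(x)$ lies in a band of half-width $|R_N|$ around $z$. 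This reduces the theorem to (i) showing the tail $R_N$ is small in expectation and (ii) an anti-concentration estimate bounding the probability that $G^K_N(x)$ falls in a thin band.

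For (i) I would bound $\mathbb{E}|R_N|$ termwise. Using $\|A_K\|=\rho_K<1$ and $\|P\|<\infty$ together with $\mathbb{E}\|w_k\|\le\sigma$ and $\mathbb{E}[\|w_k\|\,\|w_\tau\|]\le\sigma^2$ (Cauchy--Schwarz with the second-moment bound), each of the three sums defining $R_N$ is dominated by a geometric series in $\gamma$ and $\rho_K$ whose first index is $N$; hence $\mathbb{E}|R_N|\le C\gamma^{N}$ for an explicit $C$ depending on $\|P\|,\|x\|,\gamma,\rho_K,\sigma$. I would stress that this step uses only the assumed second moment of $w_k$, consistent with the fact that Theorem~\ref{The:ranretromodelapprox} does not require the fourth-moment hypothesis of Theorem~\ref{thm:variance}.

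For (ii) I would invoke the boundedness of the PDF of $w_k$ to establish a Lipschitz-type anti-concentration bound of the form $\mathbb{P}(a\le G^{K}_N(x)\le a+t)\le L\,t$, uniformly in $a$ and $N$. Granting such an $L$, taking expectations over the random half-width $|R_N|$ inside the band estimate yields $\sup_z|{F}^{K}_x-{F}^{K}_{x,N}|\le 2L\,\mathbb{E}|R_N|\le c_0\gamma^{N}$ with $c_0=2LC$, which is exactly the claimed rate. To produce $L$, I would condition on all noise terms other than a single one, say $w_0$: conditionally, $G^{K}_N(x)$ is the image of the bounded density of $w_0$ under the nondegenerate (because $P>0$) quadratic-plus-linear map $w_0\mapsto \gamma w_0^{\rm T}Pw_0+w_0^{\rm T}b+c$, and bounding the Lebesgue measure of the preimage of an interval of length $t$ gives the band bound after averaging out the remaining noise.

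The hard part is precisely this anti-concentration step, for two intertwined reasons. First, $G^{K}_N(x)$ contains the quadratic forms $w_k^{\rm T}Pw_k$, whose laws are singular near their minimum (the $\chi^2$-type blow-up already visible for scalar systems), so a globally bounded density cannot simply be asserted and the bounded-PDF hypothesis must be used through the volume-of-preimage argument above. Second, under the shared-noise coupling $G^{K}_N(x)$ and $R_N$ are \emph{dependent} through the cross terms, so the band inequality must be made conditional before the density bound is applied; here I would exploit the structural observation that, conditioned on the remaining noise, $R_N$ is \emph{linear} in the isolated variable while $G^{K}_N(x)$ is quadratic in it, which decouples the width $|R_N|$ from the dominant curvature of $G^{K}_N(x)$ enough to carry the estimate through. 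Once (i) and (ii) are in hand, collecting the constants gives the explicit $c_0$.
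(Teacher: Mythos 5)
Your skeleton coincides with the paper's: both proofs realise $G^{K}(x)$ and $G^{K}_N(x)$ on the same noise sequence, identify the CDF gap with the explicit tail (your $R_N$ is the paper's $Y_N$), and reduce the claim to the product of a bounded-density (anti-concentration) constant for $G^{K}_N(x)$ and $\mathbb{E}\big[|R_N|\big]$; your part (i) is exactly the paper's computation and is correct under the stated second-moment assumptions. One point in your favour: your indicator/band inequality requires no independence, whereas the paper's identity $\mathbb{P}(G^K_N(x)+Y_N\le z)=\int_{-\infty}^{\infty}\mathbb{P}(G^K_N(x)\le z-t)\,\mathbb{P}(Y_N=t)\,dt$ tacitly treats $Y_N$ as independent of $G^K_N(x)$, which is not the case because of the cross terms $w_k^{\rm{T}}PA_K^{k-\tau}w_\tau$ with $k\ge N$ and $\tau<N$; you correctly flag this dependence.

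The genuine gap is in your part (ii). The paper does not prove anti-concentration from first principles: it applies the mean value theorem to $F^K_{x,N}$ and invokes a bound $f_{\max}$ on the PDF of $G^K_N(x)$ (taken to follow from the bounded PDF of $w_k$), and $f_{\max}$ is simply absorbed into $c_0$. Your substitute --- condition on all noise except $w_0$ and bound the Lebesgue measure of the preimage of an interval under $w_0\mapsto\gamma w_0^{\rm{T}}Pw_0+w_0^{\rm{T}}b+c$ --- cannot produce a bound linear in the interval length: when the interval sits at the minimum of the conditional quadratic, the preimage of an interval of length $t$ has measure of order $t^{n/2}$; for a scalar system it consists of two intervals of total length about $2\sqrt{t/(\gamma P)}$. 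This is exactly the $\chi^2$ singularity you yourself point out, and it is not removed by the affine-in-$w_0$ structure of $R_N$. Through your route the band probability is only $O\big(\sqrt{\mathbb{E}|R_N|}\big)=O\big(\gamma^{N/2}\big)$, so you would prove a rate $\gamma^{N/2}$, not the claimed $c_0\gamma^N$. A density bound for $G^K_N(x)$ that is uniform in $N$ cannot be extracted by isolating a single noise vector; it needs the smoothing obtained by combining at least two independent noise contributions (for instance, the convolution of two densities with $u^{-1/2}$ singularities is already bounded), or it must be invoked at the level of $G^K_N(x)$ itself, which is effectively what the paper does with $f_{\max}$. As written, your argument does not close to the stated bound.
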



\begin{remark}
The bound on the distribution approximation in \eqref{eq:approximat:bound} relies on the conditions of Theorem~\ref{The:ranretromodelapprox}, which ensure that the PDF of $G^K_N$ is continuous and bounded. Note that these conditions are not strict, and indeed hold for many  noise distributions commonly used in linear control systems, including the Gaussian and uniform ones.  
\end{remark}

\subsection{Model-Free Approximation of the Return Distribution}
When the matrices $A,B$ are unknown, one cannot use the exact form of the random return to compute the distribution. In this section, we propose a model-free method to estimate the distribution of the random return.

In the absence of information about the system matrices A and B, one costly yet straightforward approach to estimate the distribution is by directly sampling the random return $G^{K}(x)$ as defined in \eqref{eq:Gk:def:original}.
This random return represents the sum of discounted rewards over an infinite time horizon.
To make the computation practically manageable, we truncate the time horizon and disregard rewards occurring after time step $T$.
Accordingly, we define the random variable
\begin{align}
    G^{K,T}(x) =  \sum_{t=0}^{T} \gamma^t x_t^{\rm{T}} (Q+K^{\rm{T}} R K) x_t  , \quad x_0 = x.
\end{align}
We denote by $F^{K,T}_x(z)$ the CDF of $G^{K,T}(x)$ and recall that $F^{K}_x(z)$ is the CDF of $G^K(x)$.
Intuitively, $F^{K,T}_x(z)$ closely approximates $F^{K}_x(z)$ when $T$ is sufficiently large.
This is due to the fact that every term beyond time step $T$ becomes negligible after being discounted by $\gamma^t$.
Therefore, we sample the random return $G^{K,T}(x)$ to estimate the distribution of $G^{K}(x)$.

\begin{algorithm}[t]
\caption{Model-free Distributional Policy Evaluation}
\begin{algorithmic}[1]
\REQUIRE  initial values $x$, controller $K$
    \FOR {${\rm{iteration}} \; m=1,\ldots,M$}
        \STATE Initial state $x_{m,0}=x$;
        \FOR {$ {\rm{time}} \; t=0,1,\ldots,T-1$}
            \STATE Implement controller $u_{m,t} = K x_{m,t}$;
            \STATE Observe $x_{m,t+1}=A x_{m,t} +B u_{m,t} + v_t$;
        \ENDFOR
        \STATE Obtain ${G}^{K,T}_m(x) =  \mathop\sum\limits_{t=0}^{T} \gamma^t x_{m,t}^{\rm{T}} (Q+K^{\rm{T}} R K) x_{m,t}$;
    \ENDFOR
    \STATE Construct EDF $\hat{F}^{K,T}_{x,M}(z)= \frac{1}{M} \mathop\sum\limits_{m=1}^M \textbf{1}\{ {G}^{K,T}_m(x) \leq z\}$.
\end{algorithmic}\label{alg:algorithm_MFPE}
\end{algorithm}

The detailed model-free distributional policy evaluation is presented in Algorithm~\ref{alg:algorithm_MFPE}.
Specifically, at each iteration $m$, starting from the initial state $x_{m,0} = x$, we repeatedly implement the static controller $u_{m,t} = K x_{m,t}$ and generate a total of $M$ trajectories.
Given the $m$-th trajectory $\{ x_{m,t}\}_{t=0:T}$ with $x_{m,0}=x$, define the sampling cost
\begin{align}
    {G}^{K,T}_m(x) =  \sum_{t=0}^{T} \gamma^t x_{m,t}^{\rm{T}} (Q+K^{\rm{T}} R K) x_{m,t}.
\end{align}
Using ${G}^{K,T}_m(x)$, $m=1,\ldots,M$, the empirical distribution function (EDF) is constructed by
\begin{align}
    \hat{F}^{K,T}_{x,M}(z) = \frac{1}{M} \sum_{m=1}^M \textbf{1}\{ {G}^{K,T}_m(x) \leq z\},
\end{align}
where $\textbf{1}\{ \cdot\}$ denotes the indicator function.
Intuitively, the empirical distribution $\hat{F}^{K,T}_{x,M}(z)$ is close to the distribution $F^{K,T}_x(z)$ when $M$ is sufficiently large and to the distribution $F^K_x(z)$ when $T$ is also large.
The following theorem provides an upper bound on the difference between the distributions $\hat{F}^{K,T}_{x,M}(z)$ and $F^K_x(z)$.
The proof can be found in Appendix~\ref{app:thm:modelfreeappro}.

\begin{theorem}\label{The:ranretromodelfreeapprox}
Assume that the PDF of $w_k$ is bounded, and satisfies  $\mathbb{E}[w_k]=0$ and $\mathbb{E}[w_k^{\rm{T}} w_k] \leq \sigma^2$, for all $k\in \mathbb{N}$. Suppose that the feedback gain $K$ is stabilizing such that $\left\|A_K\right\| = \rho_K <1$. Then, with probability at least $1-\delta$, we have 
\begin{align}\label{eq:model_free_appro_error}
    \sup_z| \hat{F}_{x,M}^{K,T}(z) - F^K_x(z)| \leq \sqrt{\frac{\ln(1 /\delta)}{2M}} \nonumber \\+ f_{\max}\left\|Q_K \right\| \gamma^{T+1}(c_1 \rho_K^{2(T+1)}  + c_2 \rho_K^{T+1}  + c_3),
\end{align}
where 
$f_{\max}$ is the maximum of the PDF of the random variable $G^{K,T}(x)$, $ Q_K =  Q+ K^{\rm{T}}RK $,
$c_1= \frac{\left\| x\right\|^2}{1-\gamma \rho_K^2}$, $c_2 = \frac{2 \left\| x\right\| \sigma}{(1-\rho_K)(1- \gamma \rho_K)}$, $c_3 = \frac{\sigma^2}{(1-\gamma)(1-\gamma \rho_K) }$.
\end{theorem}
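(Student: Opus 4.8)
The plan is to control the target Kolmogorov distance $\sup_z|\hat F^{K,T}_{x,M}(z)-F^K_x(z)|$ by inserting the intermediate CDF $F^{K,T}_x$ and splitting, via the triangle inequality, into a \emph{statistical} error $\sup_z|\hat F^{K,T}_{x,M}(z)-F^{K,T}_x(z)|$ (from using only $M$ sampled trajectories) and a deterministic \emph{truncation} error $\sup_z|F^{K,T}_x(z)-F^K_x(z)|$ (from cutting the infinite horizon at $T$). The two pieces are bounded separately and recombined, the first giving the $\sqrt{\ln(1/\delta)/(2M)}$ summand and the second giving the $f_{\max}\|Q_K\|\gamma^{T+1}(\cdots)$ summand of \eqref{eq:model_free_appro_error}.

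For the statistical term, the key observation is that the $M$ trajectories are generated with independent disturbance realizations, so the sampled costs ${G}^{K,T}_1(x),\dots,{G}^{K,T}_M(x)$ are i.i.d.\ draws from the law of $G^{K,T}(x)$ and $\hat F^{K,T}_{x,M}$ is exactly their empirical CDF. I would then invoke the Dvoretzky--Kiefer--Wolfowitz inequality in the form $\Pr(\sup_z|\hat F^{K,T}_{x,M}(z)-F^{K,T}_x(z)|>\epsilon)\le e^{-2M\epsilon^2}$, set the right-hand side equal to $\delta$, and solve for $\epsilon$, which yields the bound $\sqrt{\ln(1/\delta)/(2M)}$ holding with probability at least $1-\delta$.

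For the truncation term, I would write $G^K(x)=G^{K,T}(x)+\Delta_T$ with $\Delta_T=\sum_{t=T+1}^{\infty}\gamma^t x_t^{\rm T}Q_K x_t\ge 0$, so that $G^{K}(x)\ge G^{K,T}(x)$ pointwise and hence $0\le F^{K,T}_x(z)-F^K_x(z)=\Pr(z-\Delta_T<G^{K,T}(x)\le z)$ for every $z$. Since the PDF of $G^{K,T}(x)$ is bounded by $f_{\max}$ its CDF is $f_{\max}$-Lipschitz, and I would use this to bound the last probability by $f_{\max}\,\mathbb{E}[\Delta_T]$. The remaining purely deterministic task is to estimate $\mathbb{E}[\Delta_T]\le\|Q_K\|\sum_{t=T+1}^{\infty}\gamma^t\,\mathbb{E}\|x_t\|^2$; inserting $x_t=A_K^t x+\sum_{j=0}^{t-1}A_K^{t-1-j}w_j$, applying the triangle-inequality bound $\|x_t\|\le\rho_K^t\|x\|+\sum_{j=0}^{t-1}\rho_K^{t-1-j}\|w_j\|$, squaring, and taking expectations with $\mathbb{E}\|w_k\|\le\sigma$ and $\mathbb{E}\|w_k\|^2\le\sigma^2$ leaves three groups of terms. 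Summing the resulting geometric series in $\gamma\rho_K^2$ and $\gamma\rho_K$ produces the deterministic initial-state contribution $c_1\rho_K^{2(T+1)}$, the cross contribution $c_2\rho_K^{T+1}$ (which survives precisely because the crude triangle bound does not exploit the orthogonality of the zero-mean disturbances), and the pure-noise contribution $c_3$, each carrying the common factor $f_{\max}\|Q_K\|\gamma^{T+1}$.

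The step I expect to be the main obstacle is the bounded-density estimate $F^{K,T}_x(z)-F^K_x(z)\le f_{\max}\mathbb{E}[\Delta_T]$: the naive ``condition on $\Delta_T$ and use Lipschitzness of $F^{K,T}_x$'' argument is not valid as stated, because $G^{K,T}(x)$ and the tail $\Delta_T$ are dependent (they share the disturbances $w_0,\dots,w_{T-1}$), and for dependent increments the linear-in-$\mathbb{E}[\Delta_T]$ bound can fail. The clean route is the fresh-noise decomposition $G^K(x)=G^{K,T}(x)+\gamma^{T+1}G^K(x_{T+1})$, in which, conditioned on $x_{T+1}$, the tail is driven solely by the independent future disturbances $w_{T+1},w_{T+2},\dots$; this conditional independence, combined with the $f_{\max}$-Lipschitz CDF of $G^{K,T}(x)$, is what legitimizes passing to $\mathbb{E}[\Delta_T]$. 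A secondary, more tedious point is keeping the geometric bookkeeping tight enough to land exactly on the stated constants $c_1,c_2,c_3$.
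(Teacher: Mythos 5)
Your proposal follows essentially the same route as the paper's proof: insert the intermediate CDF $F^{K,T}_x$, split by the triangle inequality, bound the statistical term via the Dvoretzky--Kiefer--Wolfowitz inequality, bound the truncation term by $f_{\max}\,\mathbb{E}[|Z_T|]$ with $Z_T=G^K(x)-G^{K,T}(x)$, and then expand $x_t=A_K^t x+\sum_{\tau=0}^{t-1}A_K^{t-1-\tau}w_\tau$ and sum geometric series. Two points of comparison are worth recording.

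First, the dependence issue you flag as the ``main obstacle'' is genuine, but the paper does not resolve it: its proof writes
\begin{align*}
\mathbb{P}\big(G^{K,T}(x)+Z_T\le z\big)=\int_{-\infty}^{\infty}\mathbb{P}\big(G^{K,T}(x)\le z-t\big)\,\mathbb{P}(Z_T=t)\,dt,
\end{align*}
i.e., it applies the convolution formula to $G^{K,T}(x)$ and $Z_T$ as if they were independent, even though both are driven by $w_0,\dots,w_{T-1}$ (the same silent assumption appears in the proofs of Theorem~\ref{thm:perturb} and Theorem~\ref{The:ranretromodelapprox}). Your fresh-noise conditioning on $x_{T+1}$ is the right direction --- given $x_{T+1}$, the tail $\gamma^{T+1}G^K(x_{T+1})$ depends only on future disturbances and is indeed conditionally independent of $G^{K,T}(x)$ --- but it is itself incomplete: the Lipschitz constant you then need is that of the \emph{conditional} CDF of $G^{K,T}(x)$ given $x_{T+1}$, which is not controlled by the unconditional $f_{\max}$ appearing in the theorem statement (the sup of an average of conditional densities can be much smaller than the average of their sups). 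So neither your argument nor the paper's is fully rigorous at this step; a clean linear-in-$\mathbb{E}[Z_T]$ bound requires an additional hypothesis such as a uniform bound on these conditional densities, whereas without any independence one can only get a square-root rate of order $\sqrt{f_{\max}\,\mathbb{E}[Z_T]}$ by combining Lipschitzness with Markov's inequality.

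Second, your bookkeeping for $\mathbb{E}\|x_t\|^2$ is cruder than the paper's: by scalarizing with the triangle inequality before squaring, your pure-noise term comes out as $\sigma^2/(1-\rho_K)^2$, whereas the paper keeps the vector form and uses zero-mean independence to cancel the cross terms in $\mathbb{E}\big\|\sum_{\tau=0}^{t-1}A_K^{t-1-\tau}w_\tau\big\|^2$, obtaining $\sigma^2/(1-\rho_K^2)$. Consequently your $c_3$ is strictly larger than the paper's derived constant, while your $c_1$ and $c_2$ match exactly. This is cosmetic rather than structural; note, in fact, that the paper's stated $c_3=\sigma^2/\big((1-\gamma)(1-\gamma\rho_K)\big)$ already disagrees with the $\sigma^2/\big((1-\gamma)(1-\rho_K^2)\big)$ produced by its own proof.
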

Theorem~\ref{The:ranretromodelfreeapprox} shows that the accuracy of the distribution estimate depends on the choice of two key parameters: the time horizon $T$ and the number of generated trajectories $M$.
When both $M$ and $T$ are sufficiently large, $\hat{F}_{x,M}^{K,T}(z)$ can serve as a reliable approximation for $F^K_x(z)$.
Supported by this result, we consider the return distribution learned by Algorithm~\ref{alg:algorithm_MFPE} with sufficiently large values of $M,T$ as the true return distribution in the simulation part.

\begin{table}
    \centering
    \begin{tabular}{ccccc}
        \toprule
         $\gamma$ & UB &  $N$  & $T,M(1-\delta=95\%)$ & $T,M(1-\delta=99\%)$  \\
        \midrule
         0.6& 0.02 & 11 &   (100,4000) &(100,6000)    \\
         0.6& 0.01 & 12 &   (100,15000) &(100,23000)    \\
         0.8& 0.02 & 24 &   (100,4000) &(100,6000)    \\
         0.8& 0.01 & 27 &   (100,15000) &(100,23000)    \\
         \bottomrule
    \end{tabular}
    \caption{ Comparison of model-based (left) and model-free (right) approximation methods. Here, $\gamma$ is the discount parameter; UB is the bound in the right hand side of \eqref{eq:approximat:bound} and \eqref{eq:model_free_appro_error} for model-based and model-free methods, respectively; $N$ is the smallest integer such that $c_0 \gamma^N \leq {\rm{UB}}$; and $T,M(1-\delta)$ denote the pair comprising the horizon length $T$ and the number of trajectories $M$ required so that the approximation error in \eqref{eq:model_free_appro_error} is bounded by UB with probability at least $1-\delta$. }
    \label{tab:modelfree}
\end{table}

\begin{remark}
We note that the random variables $G^K_N(x)$ and $G^{K,T}(x)$ serve as the approximations to the true random return $G^K(x)$ by truncating the number of random variables and the time horizon, respectively. As shown in Theorems~\ref{The:ranretromodelapprox} and~\ref{The:ranretromodelfreeapprox}, increasing the number of random variables or extending the truncated horizon definitely enhance the approximation accuracy for model-based and model-free methods, respectively. However, the associated costs one needs to pay to obtain their distributions are usually different. 
As shown in Table~\ref{tab:modelfree}, the model-free method requires a sufficiently large value of $M$ to achieve a reliable distribution estimate with a high probability. In contrast,
the model-based method can attain the same level of accuracy with probability $1$ using a significantly smaller number of random variables.
Hence, when the system matrices and the disturbances are known, the computation of the distribution of $G^K_N(x)$ incurs less costs. It is also worth noting that the model-free method is not sensitive to the discount parameter $\gamma$ while the model-based method is.
\end{remark}

\section{Extension to Partially Observable Systems}\label{sec:LQG}
In this section, we analyse the case when the state is not fully observable. We show that most of the results for LQR can be extended to this partially observable case.

Consider a partially observable discrete-time linear control system:
\begin{align}
    x_{t+1}&=A x_t + B u_t + v_t,  \nonumber \\
    y_t &= C x_t + s_t, \nonumber 
\end{align}
where $x_t \in \mathbb{R}^n$, $u_t\in \mathbb{R}^p$, $y_t\in \mathbb{R}^l$,  $v_t \in \mathbb{R}^n$, and $s_t\in \mathbb{R}^l$ are the system state, control input, system output,  process noise,  and observation noise, respectively. We assume that the system is observable and controllable.
By introducing the feedback gain $K$ and the observer gain $L$, we define the estimated state  and controller 
\begin{align}
    \hat{x}_{t+1} &= A \hat{x}_t + B u_t + L(y_t - C  \hat{x}_t), \nonumber \\
     u_t& = K \hat{x}_t. \nonumber
\end{align}
By defining $\tilde{x}_t = x_t - \hat{x}_t$, $\bar{x}_t = [x_t^{\rm{T}}, \tilde{x}_t^{\rm{T}}]^{\rm{T}}$, we get the augmented system
\begin{align}\label{eq:dynamics:augmented}
    \bar{x}_{t+1} = \bar{A}_{KL} \bar{x}_t +  \bar{v}_t,
\end{align}
where
\begin{align*}
    \bar{A}_{KL} &= \left[ \begin{array}{cc}
         A+BK& -BK  \\
         0& A-LC 
    \end{array} \right],
    \bar{v}_t = F \left[ \begin{array}{c} v_t \\ s_t \end{array} \right],\\
    F &= \left[ \begin{array}{cc}
         I& 0  \\
         I& -L 
    \end{array} \right].
\end{align*}
If $\mathbb{E}[v_t]=\mathbb{E}[s_t]=0$, and the collection of $v_t$ and $s_t$ is i.i.d., it is easy to verify that $\mathbb{E}[\bar{v}_t]=0$ and the collection of $\bar{v}_t$ is i.i.d.. 
We denote the distribution of $\bar{v}_t$ by $\bar{\mathcal{D}}$. 
Define $\bar{Q}_K:=\left[ \begin{array}{cc} Q + K^{\rm{T}} R K & -K^{\rm{T}} R K  \\ -K^{\rm{T}} R K & K^{\rm{T}} R K  \end{array} \right]$ and the random return 
\begin{align}\label{eq:dist_func_partial}
    G^{KL}(\bar{x}) &=  \sum_{t=0}^{\infty} \gamma^t (x_t^{\rm{T}} Q x_t + u_t^{\rm{T}} R u_t ) \nonumber \\
    &= \sum_{t=0}^{\infty} \gamma^t
    (x_t^{\rm{T}} Q x_t + \hat{x}_t^{\rm{T}} K^{\rm{T}} R K \hat{x}_t ) \nonumber \\
    &= \sum_{t=0}^{\infty} \gamma^t \bar{x}_t^{\rm{T}} \bar{Q}_K \bar{x}_t , \quad \bar{x}_0 = \bar{x},
\end{align}
where $\bar{x}=[x^{\rm{T}},\tilde{x}_0^{\rm{T}}]^{\rm{T}}$.
The random return  $G^{KL}(\bar{x})$ satisfies the following random variable Bellman equation  
\begin{align}\label{eq:rv:bellman:LQG}
    G^{KL}(\bar{x}) & \mathop{=}^{D} \bar{x}^{\rm{T}} \bar{Q}_K \bar{x} + \gamma G^{KL}({X}'),\quad {X}'=\bar{A}_{KL} \bar{x}+ \bar{v}_0.
\end{align}


In the following corollary, we provide an explicit expression of the random return $G^{KL}(\bar{x})$. The proof can be obtained by applying the results in Theorem~\ref{Thm:exact:dist} to the augmented system \eqref{eq:dynamics:augmented} and is omitted.
\begin{corollary}\label{The:LQGchara}
Suppose that the feedback gain $K$ and observer gain $L$ are chosen such that $\bar{A}_{KL}$ is stable. Let  
\begin{align}\label{eq:dist_func:LQG}
    &G^{KL}(\bar{x}) = \bar{x}^{\rm{T}} \bar{P} \bar{x} +  2 \sum_{k = 0 }^{\infty} \gamma^{k+1} \bar{w}_k^{\rm{T}} \bar{P} \bar{A}_{KL}^{k+1}\bar{x} \nonumber \\
    &+  \sum_{k = 0 }^{\infty}  \gamma^{k+1}  \bar{w}_k^{\rm{T}} \bar{P} \bar{w}_k + 
 2  \sum_{k = 1 }^{\infty} \gamma^{k+1} \bar{w}_k^{\rm{T}} \bar{P} \sum_{\tau=0}^{k-1} \bar{A}_{KL}^{k-\tau}\bar{w}_{\tau},
\end{align}
where $\bar{P}$ is obtained from the Lyapunov equation $\bar{P} = \bar{Q}_K + \gamma \bar{A}_{KL}^{\rm{T}} \bar{P} \bar{A}_{KL}$, and the random variables $\bar{w}_k \sim \bar{\mathcal{D}} $ are independent from each other for all $k\in\mathbb{N}$. Then, the random variable $G^{KL}(x)$ defined in \eqref{eq:dist_func_partial} is a fixed point solution to the random variable Bellman equation \eqref{eq:rv:bellman:LQG}.
\end{corollary}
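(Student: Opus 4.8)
The plan is to obtain this result as a direct specialisation of Theorem~\ref{Thm:exact:dist} applied to the augmented closed-loop system \eqref{eq:dynamics:augmented}, under the dictionary that replaces the fully-observed quantities $(x, A_K, Q_K, v_t, w_k, P)$ by their augmented counterparts $(\bar{x}, \bar{A}_{KL}, \bar{Q}_K, \bar{v}_t, \bar{w}_k, \bar{P})$. First I would observe that the augmented dynamics \eqref{eq:dynamics:augmented} is exactly of the autonomous linear-plus-i.i.d.-noise form $\bar{x}_{t+1} = \bar{A}_{KL}\bar{x}_t + \bar{v}_t$ required by Theorem~\ref{Thm:exact:dist}, with state $\bar{x}_t$, system matrix $\bar{A}_{KL}$, and driving disturbance $\bar{v}_t$. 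The text preceding the corollary already records that $\mathbb{E}[\bar{v}_t]=0$ and that the collection $\{\bar{v}_t\}$ is i.i.d.\ with distribution $\bar{\mathcal{D}}$, inherited from the i.i.d.\ assumption on $(v_t, s_t)$ through the fixed linear map $\bar{v}_t = F[v_t^{\rm{T}}, s_t^{\rm{T}}]^{\rm{T}}$; this is precisely the hypothesis that Theorem~\ref{Thm:exact:dist} imposes on the noise.

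Next I would verify that the running cost is in the quadratic-accumulation form $\sum_{t=0}^{\infty}\gamma^t \bar{x}_t^{\rm{T}}\bar{Q}_K\bar{x}_t$ demanded by the theorem. The chain of equalities in \eqref{eq:dist_func_partial} already rewrites the physical cost $\sum_t \gamma^t(x_t^{\rm{T}}Qx_t + u_t^{\rm{T}}Ru_t)$ into this form using $u_t = K\hat{x}_t$ and the block structure of $\bar{x}_t$; I would simply confirm the algebra, for instance by checking $\bar{x}_t^{\rm{T}}\bar{Q}_K\bar{x}_t = x_t^{\rm{T}}Qx_t + \hat{x}_t^{\rm{T}}K^{\rm{T}}RK\hat{x}_t$ via $\hat{x}_t = x_t - \tilde{x}_t$. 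With the stage-cost matrix $\bar{Q}_K$ and the stable matrix $\bar{A}_{KL}$ in hand, the discounted Lyapunov equation $\bar{P} = \bar{Q}_K + \gamma\bar{A}_{KL}^{\rm{T}}\bar{P}\bar{A}_{KL}$ admits a unique solution $\bar{P}$, because stability of $\bar{A}_{KL}$ together with $\gamma\in(0,1)$ renders $I\otimes I - \gamma\bar{A}_{KL}^{\rm{T}}\otimes\bar{A}_{KL}^{\rm{T}}$ invertible. Applying Theorem~\ref{Thm:exact:dist} verbatim to this data then yields the expression \eqref{eq:dist_func:LQG} and certifies that it is a fixed point of the distributional Bellman equation \eqref{eq:rv:bellman:LQG}, which is exactly \eqref{eq:rv:bellman} read under the same dictionary.

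Since the argument is a reduction, I expect no substantive obstacle; the only points deserving care are bookkeeping ones. The first is confirming that the i.i.d.\ property genuinely transfers through $F$ — it does, since an i.i.d.\ sequence pushed forward by a fixed time-invariant deterministic matrix remains i.i.d. The second, slightly more delicate point, is that $\bar{Q}_K$ is in general only positive semidefinite rather than positive definite, because its dependence on $\tilde{x}_t$ enters solely through $K^{\rm{T}}RK$. I would therefore note that Theorem~\ref{Thm:exact:dist} places no positive-definiteness requirement on the stage-cost matrix in the first place: its only standing hypothesis is stability of the system matrix, and the construction uses $\gamma\in(0,1)$ together with that stability merely to guarantee solvability of the Lyapunov equation and convergence of the infinite series. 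Hence semidefiniteness of $\bar{Q}_K$ is harmless, and once these two checks are in place the corollary follows immediately.
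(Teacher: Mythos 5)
Your proposal is correct and takes essentially the same approach as the paper: the paper proves this corollary precisely by applying Theorem~\ref{Thm:exact:dist} to the augmented system \eqref{eq:dynamics:augmented} under the substitution $(x, A_K, Q_K, v_t, w_k, P) \to (\bar{x}, \bar{A}_{KL}, \bar{Q}_K, \bar{v}_t, \bar{w}_k, \bar{P})$, which is exactly your reduction. Your bookkeeping checks --- that the i.i.d.\ property of $(v_t,s_t)$ transfers through the fixed matrix $F$, that $\bar{x}_t^{\rm{T}}\bar{Q}_K\bar{x}_t = x_t^{\rm{T}}Qx_t + \hat{x}_t^{\rm{T}}K^{\rm{T}}RK\hat{x}_t$, and that Theorem~\ref{Thm:exact:dist} never requires positive definiteness of the stage-cost matrix, so the mere positive semidefiniteness of $\bar{Q}_K$ is harmless --- are all sound and simply make explicit what the paper leaves implicit.
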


The variance bound part is similar to that of the fully observable case, and is presented in the following corollary. The proof can be obtained by following a similar methodology to that employed for Theorem~\ref{thm:variance} and is omitted.
\begin{corollary}\label{The:LQGvariance}
Assume that $\mathbb{E}[\bar{w}_k] = 0$ and $\mathbb{E}[\left\| \bar{w}_k \right\|^4] \leq \bar{\sigma}_4$, for all $k\in \mathbb{N}$. Suppose that the feedback gain  $K$ and observer gain $L$ are chosen such that $\left\|\bar{A}_{KL}\right\| = \bar{\rho}_K <1$. Then, the variance of the random variable $G^{KL}(x)$ is bounded.
\end{corollary}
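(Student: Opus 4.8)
The plan is to reduce Corollary~\ref{The:LQGvariance} directly to Theorem~\ref{thm:variance} by recognizing that the augmented system \eqref{eq:dynamics:augmented} is itself an instance of the fully observable setting treated earlier. Concretely, the dynamics $\bar{x}_{t+1} = \bar{A}_{KL}\bar{x}_t + \bar{v}_t$ have exactly the same structure as the closed-loop LQR dynamics $x_{t+1}=A_K x_t + v_t$, with the substitutions $A_K \mapsto \bar{A}_{KL}$, $v_t \mapsto \bar{v}_t$, $Q_K \mapsto \bar{Q}_K$, and $P \mapsto \bar{P}$. The random return \eqref{eq:dist_func:LQG} then has the identical algebraic form as \eqref{eq:dist_func}. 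Since Theorem~\ref{thm:variance} applies to any stable closed-loop matrix with zero-mean i.i.d.\ disturbance having bounded fourth moment, the whole argument transfers verbatim once the hypotheses are checked for the augmented quantities.

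The key steps I would carry out are the following. First I would verify that the augmented disturbance $\bar{v}_t = F[v_t^{\rm{T}}, s_t^{\rm{T}}]^{\rm{T}}$ (equivalently $\bar{w}_k$) inherits the required moment conditions: the text already notes $\mathbb{E}[\bar{v}_t]=0$, and the corollary assumes $\mathbb{E}[\|\bar{w}_k\|^4]\leq\bar{\sigma}_4$, so the fourth-moment hypothesis of Theorem~\ref{thm:variance} holds directly. Second, I would note that the stability hypothesis $\|\bar{A}_{KL}\|=\bar{\rho}_K<1$ is exactly the stabilizing condition $\|A_K\|=\rho_K<1$ required by Theorem~\ref{thm:variance}, now imposed on the augmented closed-loop matrix. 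Third, I would confirm that $\bar{P}$ solving $\bar{P}=\bar{Q}_K+\gamma\bar{A}_{KL}^{\rm{T}}\bar{P}\bar{A}_{KL}$ plays precisely the role of $P$ in \eqref{eq:dist_func}, so that $G^{KL}(\bar{x})$ is the same function of $(\bar{A}_{KL},\bar{P},\{\bar{w}_k\})$ that $G^K(x)$ is of $(A_K,P,\{w_k\})$. With these three correspondences in place, the conclusion that $\mathrm{Var}[G^{KL}(\bar{x})]<\infty$ follows immediately from Theorem~\ref{thm:variance} applied to the augmented system.

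The only substantive point that is not completely automatic is the independence and identical-distribution structure of the augmented noise $\bar{w}_k$. In the fully observable case the $w_k$ are genuinely i.i.d.\ copies of $\mathcal{D}$; here $\bar{w}_k\sim\bar{\mathcal{D}}$ is a linear image under $F$ of the stacked primitive noise $[v_t^{\rm{T}},s_t^{\rm{T}}]^{\rm{T}}$. I would therefore check that since $v_t$ and $s_t$ are collectively i.i.d.\ across time, their fixed linear transformation $\bar{v}_t$ is likewise i.i.d.\ with a well-defined common distribution $\bar{\mathcal{D}}$ — a fact the text asserts just before the corollary. This guarantees the cross-terms $\mathbb{E}[\bar{w}_k^{\rm{T}}\bar{P}\bar{w}_\tau]$ and the higher mixed moments decouple in exactly the way Theorem~\ref{thm:variance}'s proof exploits.

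I expect the main (and essentially only) obstacle to be bookkeeping rather than conceptual: one must ensure that the block structure of $\bar{A}_{KL}$, $\bar{Q}_K$, and $F$ does not introduce any correlation or moment pathology that would violate the hypotheses of Theorem~\ref{thm:variance}. Since $F$ is a fixed deterministic matrix and the primitive noises are i.i.d.\ with zero mean and bounded fourth moment, the transformed noise $\bar{w}_k$ automatically satisfies $\mathbb{E}[\bar{w}_k]=0$ and $\mathbb{E}[\|\bar{w}_k\|^4]\leq\|F\|^4\,\mathbb{E}[\|[v_k^{\rm{T}},s_k^{\rm{T}}]^{\rm{T}}\|^4]<\infty$, so no new difficulty arises. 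The proof is thus a clean specialization, which justifies omitting the detailed calculation as the authors do.
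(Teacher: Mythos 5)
Your proposal is correct and matches the paper's intent exactly: the paper omits the proof precisely because, as you argue, the augmented system \eqref{eq:dynamics:augmented} with i.i.d.\ zero-mean noise $\bar{w}_k$, stable matrix $\bar{A}_{KL}$, and Lyapunov solution $\bar{P}$ is a verbatim instance of the setting of Theorem~\ref{thm:variance}. Your extra care in verifying that the fixed linear map $F$ preserves the i.i.d., zero-mean, and bounded-fourth-moment properties of the primitive noises is exactly the bookkeeping the authors leave implicit.
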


The sensitivity analysis for LQG is similar to that of the fully observable case.
Suppose that we perturb the matrix $\bar{A}_{KL}$ by an amount $\Delta \bar{A}_{KL}$. Define the matrix $\check{{A}}_{KL} = \bar{A}_{KL} + \Delta \bar{A}_{KL}$ and the perturbed random variable 
\begin{align}\label{eq:dist_func_perturbed:LQG}
    &\tilde{G}^{KL}(\bar{x}) = \bar{x}^{\rm{T}} \check{{P}} \bar{x} +  2 \sum_{k = 0 }^{\infty} \gamma^{k+1} \bar{w}_k^{\rm{T}} \check{{P}} \check{{A}}_K^{k+1}x\nonumber \\
    & +  \sum_{k = 0}^{\infty}  \gamma^{k+1}  \bar{w}_k^{\rm{T}} \check{{P}} \bar{w}_k 
    + 2  \sum_{k = 1 }^{\infty} \gamma^{k+1} \bar{w}_k^{\rm{T}} \check{{P}} \sum_{\tau=0}^{k-1} \check{{A}}_K^{k-\tau} \bar{w}_{\tau}.
\end{align}
where $\check{{P}} = \bar{Q}_K + \gamma \check{{A}}_{KL}^{\rm{T}} \check{{P}} \check{{A}}_{KL}$.
Let $F^{KL}_x$ and $\tilde{F}^{KL}_{x}$ denote the CDF of $G^{KL}(x)$ and $\tilde{G}^{KL}(x)$, respectively.
We obtain the perturbation for partially observable case in the following corollary. The proof can be adapted from that of Theorem~\ref{thm:perturb} and is omitted.
\begin{corollary}\label{The:LQGrobustness}
Assume that the PDF of $\bar{w}_k$ is bounded, and satisfy $\mathbb{E}[\bar{w}_k^{\rm{T}} \bar{w}_k] \leq \bar{\sigma}^2$, for all $k\in \mathbb{N}$.
Suppose that the feedback gain $K$ and the observer $L$ are chosen such that $ \max \{ \left\|\bar{A}_{KL}\right\|,\left\| \check{{A}}_{KL} \right\|\} =  \bar{\rho}_K <1$.
Suppose that $\bar{l}>2 \bar{\epsilon}$, where $\bar{l} = \left\| \bar{H}^{-1}\right\|^{-1}$, $\bar{H} = I \otimes I - \gamma \bar{A}_{KL}^{\rm{T}} \otimes \bar{A}_{KL}^{\rm{T}}$ and $\bar{\epsilon} = \gamma \left\| \bar{A}_{KL} \right\|_F \left\| \Delta \bar{A}_{KL} \right\|_F + \frac{\gamma}{2} \left\| \Delta \bar{A}_{KL} \right\|_F^2$. Then, we have
\begin{align*}
     &\sup_{z}|{F}^{KL}_x(z)-\tilde{F}^{KL}_{x}(z) | \leq \bar{c}_1 \left\|\Delta \bar{A}_{KL} \right\| + \bar{c}_2 \left\| \Delta \bar{A}_{KL} \right\|^2,
\end{align*}
where the constants $\bar{c}_1, \bar{c}_2$ depend on the system matrices, the initial state value $\bar{x}$, and the parameters $\gamma,\bar{\rho}_K,\bar{\sigma}$.
\end{corollary}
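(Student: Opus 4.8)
The plan is to obtain Corollary~\ref{The:LQGrobustness} as a direct instance of Theorem~\ref{thm:perturb} applied to the augmented closed-loop system \eqref{eq:dynamics:augmented}, rather than re-deriving the sensitivity bound from scratch. The key observation is that \eqref{eq:dynamics:augmented} already has exactly the autonomous form $\bar{x}_{t+1} = \bar{A}_{KL}\bar{x}_t + \bar{v}_t$ treated in the fully observable analysis, under the identification $x \leftrightarrow \bar{x}$, $A_K \leftrightarrow \bar{A}_{KL}$, $Q_K \leftrightarrow \bar{Q}_K$, $w_k \leftrightarrow \bar{w}_k$, and $P, \tilde{P} \leftrightarrow \bar{P}, \check{P}$. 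Under this dictionary the random return \eqref{eq:dist_func:LQG} is literally the instance \eqref{eq:dist_func} of the fully observable return, and its perturbed counterpart \eqref{eq:dist_func_perturbed:LQG} is literally \eqref{eq:dist_func_perturbed}. Moreover the perturbation enters only through the combined closed-loop matrix, so replacing $\Delta A_K$ by $\Delta\bar{A}_{KL}$ is legitimate: since the bound \eqref{eq:pert:bound} and the quantities $H$, $l$, $\epsilon$ are all stated purely in terms of $A_K$ and $\Delta A_K$, the proof of Theorem~\ref{thm:perturb} never uses the factorisation $\Delta A_K = \Delta A + \Delta B K$.

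First I would check that each hypothesis transfers verbatim. The stability assumption $\max\{\|\bar{A}_{KL}\|, \|\check{A}_{KL}\|\} = \bar{\rho}_K < 1$ is the renaming of $\max\{\|A_K\|, \|\tilde{A}_K\|\} = \rho_K < 1$; the second-moment bound $\mathbb{E}[\bar{w}_k^{\rm{T}}\bar{w}_k] \le \bar{\sigma}^2$ replaces $\mathbb{E}[w_k^{\rm{T}} w_k] \le \sigma^2$; and the well-posedness condition $\bar{l} > 2\bar{\epsilon}$, with $\bar{H}$, $\bar{l}$, $\bar{\epsilon}$ defined exactly as $H$, $l$, $\epsilon$ in the barred variables, is the renaming that guarantees the perturbed Lyapunov solution $\check{P}$ stays close to $\bar{P}$ with the sensitivity estimate $\|\check{P} - \bar{P}\| = O(\|\Delta\bar{A}_{KL}\|)$. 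Feeding these into \eqref{eq:pert:bound} yields the stated bound, with $\bar{c}_1, \bar{c}_2$ obtained from $\tilde{c}_1, \tilde{c}_2$ under the same substitution.

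The one step that genuinely requires care, and the main obstacle, is the density hypothesis. The proof of Theorem~\ref{thm:perturb} converts closeness of the coupled scalar returns $G^{KL}(\bar{x})$ and $\tilde{G}^{KL}(\bar{x})$ (they share the same noise sequence $\{\bar{w}_k\}$) into Kolmogorov closeness of their CDFs, and this conversion relies on a uniform bound on the \emph{scalar} density of the return. In the LQG case the augmented noise $\bar{w}_k = F[v_k^{\rm{T}}, s_k^{\rm{T}}]^{\rm{T}}$ is a rank-deficient linear image of the stacked process and observation noise, so it does not possess a density on $\mathbb{R}^{2n}$; the hypothesis that ``the PDF of $\bar{w}_k$ is bounded'' must therefore be read as a bound on the density of the scalar random variable $G^{KL}(\bar{x})$, which is what the argument actually uses. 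I would verify that this scalar density is bounded by noting that the linear-in-$\bar{w}_k$ part $2\sum_k \gamma^{k+1}\bar{w}_k^{\rm{T}}\bar{P}\bar{A}_{KL}^{k+1}\bar{x}$, together with the quadratic part, is a convergent sum of independent contributions whose projection through $\bar{P}\bar{A}_{KL}^{k+1}\bar{x}$ stays non-degenerate in the directions carrying density, so the smoothing from infinitely many independent terms yields a bounded density. With the density requirement correctly placed at the scalar level, the reduction to Theorem~\ref{thm:perturb} is complete.
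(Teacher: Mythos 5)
Your reduction is precisely the route the paper takes: its own proof of Corollary~\ref{The:LQGrobustness} is declared to be ``adapted from that of Theorem~\ref{thm:perturb}'' and omitted, i.e., the fully observable sensitivity argument is applied verbatim to the augmented system \eqref{eq:dynamics:augmented} under the dictionary $x\leftrightarrow\bar{x}$, $A_K\leftrightarrow\bar{A}_{KL}$, $w_k\leftrightarrow\bar{w}_k$, $P,\tilde{P}\leftrightarrow\bar{P},\check{P}$; and your observation that the proof of Theorem~\ref{thm:perturb} never uses the factorisation $\Delta A_K=\Delta A+\Delta B\,K$ is exactly the right justification for substituting $\Delta A_K\mapsto\Delta\bar{A}_{KL}$.

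Where you go beyond the paper is the density hypothesis, and your concern is well founded. The proof of Theorem~\ref{thm:perturb} only ever uses a uniform bound $\tilde{f}_{\max}$ on the density of the \emph{scalar} perturbed return (via the mean value theorem in \eqref{eq:pf:pert:1}); the paper infers the existence of such a bound from boundedness of the noise PDF. In the partially observable case, $\bar{w}_k=F[v_k^{\rm{T}},s_k^{\rm{T}}]^{\rm{T}}$ with $F\in\mathbb{R}^{2n\times(n+l)}$ of rank $n+\mathrm{rank}(L)$, so whenever $l<n$ (as in the paper's own experiment, where $l=2<n=3$) or $\mathrm{rank}(L)<n$, the augmented noise is supported on a proper subspace of $\mathbb{R}^{2n}$ and possesses no density at all; the hypothesis ``the PDF of $\bar{w}_k$ is bounded'' is then vacuous as literally stated. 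Your repair, reading the assumption at the level of the scalar random variable $\tilde{G}^{KL}(\bar{x})$, is what the argument actually requires, and it fixes a gap the paper's omitted proof glosses over. The one caveat is that your closing non-degeneracy argument is itself only a sketch: to make it airtight you should either state boundedness of the density of $\tilde{G}^{KL}(\bar{x})$ as an explicit standing assumption, or verify for the noise class at hand that the smoothing terms (e.g., the Gaussian linear part $2\sum_{k}\gamma^{k+1}\bar{w}_k^{\rm{T}}\check{P}\check{A}_{KL}^{k+1}\bar{x}$, or the quadratic part restricted to the range of $F$) indeed produce a bounded density.
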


The approximation part is similar to that of the fully observable case.
Let
\begin{align}\label{eq:dist_func:appro:LQG}
    &G^{KL}_N(\bar{x}) = \bar{x}^{\rm{T}} \bar{P} \bar{x} +  2 \sum_{k = 0 }^{N} \gamma^{k+1} \bar{w}_k^{\rm{T}} \bar{P} \bar{A}_{KL}^{k+1}\bar{x} \nonumber \\
    &+  \sum_{k = 0 }^{N}  \gamma^{k+1}  \bar{w}_k^{\rm{T}} \bar{P} \bar{w}_k + 
 2  \sum_{k = 1 }^{N} \gamma^{k+1} \bar{w}_k^{\rm{T}} \bar{P} \sum_{\tau=0}^{k-1} \bar{A}_{KL}^{k-\tau}\bar{w}_{\tau}.
\end{align}
Let $F^{KL}_x$ and ${F}^{KL}_{x,N}$ denote the CDF of $G^{KL}(x)$ and $G^{KL}_N(x)$, respectively.
In the following theorem, we show that the approximation error with a finite number of random variables can be bounded in the partially observable case. The proof can be adapted from that of Theorem~\ref{The:ranretromodelapprox} and is omitted.
\begin{corollary}\label{The:LQGapprox}
Assume that the PDF of $\bar{w}_k$ is bounded, and satisfy $\mathbb{E}[\bar{w}_k^{\rm{T}} \bar{w}_k] \leq \bar{\sigma}^2$, for all $k\in \mathbb{N}$.
Suppose that the feedback gain  $K$ and observer gain $L$ are chosen such that $\left\| \bar{A}_{KL}\right\| = \bar{\rho}_K <1$.
Then, the sup difference between the CDFs $F^{KL}_x$ and ${F}^{KL}_{x,N}$ is bounded by
\begin{align}\label{eq:approximat:bound:LQG}
     \sup_{z}|{F}^{KL}_x(z)-{F}^{KL}_{x,N}(z) | \leq \bar{c}_0 \gamma^N,
\end{align}
where $\bar{c}_0$ is a constant that depends on the system matrices, the initial state value $\bar{x}$, and the parameters $\gamma, \bar{\rho}_K, \bar{\sigma}$.
\end{corollary}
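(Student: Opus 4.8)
The plan is to obtain Corollary~\ref{The:LQGapprox} as a direct instantiation of Theorem~\ref{The:ranretromodelapprox} applied to the augmented closed-loop system \eqref{eq:dynamics:augmented}. The augmented dynamics $\bar{x}_{t+1} = \bar{A}_{KL}\bar{x}_t + \bar{v}_t$ have exactly the same structure as the fully observable closed loop $x' = A_K x + v_0$ under the dictionary $x \mapsto \bar{x}$, $A_K \mapsto \bar{A}_{KL}$, $Q_K \mapsto \bar{Q}_K$, $P \mapsto \bar{P}$, $w_k \mapsto \bar{w}_k$, $\mathcal{D} \mapsto \bar{\mathcal{D}}$, $\rho_K \mapsto \bar{\rho}_K$, $\sigma \mapsto \bar{\sigma}$. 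Under this dictionary the exact return \eqref{eq:dist_func:LQG} (already established in Corollary~\ref{The:LQGchara}) and its truncation \eqref{eq:dist_func:appro:LQG} are precisely the images of \eqref{eq:dist_func} and \eqref{eq:approxreturn}, and the hypotheses of Corollary~\ref{The:LQGapprox}, namely $\|\bar{A}_{KL}\| = \bar{\rho}_K < 1$, bounded density of $\bar{w}_k$, and $\mathbb{E}[\bar{w}_k^{\rm{T}}\bar{w}_k] \le \bar{\sigma}^2$, are the images of the hypotheses of Theorem~\ref{The:ranretromodelapprox}. So I would first verify this correspondence line by line and then transport the conclusion verbatim to obtain $\sup_z |F^{KL}_x(z) - F^{KL}_{x,N}(z)| \le \bar{c}_0 \gamma^N$.

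First I would recall the mechanism of Theorem~\ref{The:ranretromodelapprox} that I intend to reuse, so that the adaptation is more than a black-box citation. The approximation error is governed by the tail random variable $\bar{D}_N := G^{KL}(\bar{x}) - G^{KL}_N(\bar{x})$, which collects exactly the summands of index $k>N$ in \eqref{eq:dist_func:LQG}. Each such summand carries a factor $\gamma^{k+1}$ together with a power $\bar{A}_{KL}^{k+1}$ or $\bar{A}_{KL}^{k-\tau}$, whose operator norm is bounded by $\bar{\rho}_K^{k+1}$ or $\bar{\rho}_K^{k-\tau}$. Summing the resulting geometric series against $\|\bar{x}\|$ and $\mathbb{E}[\bar{w}_k^{\rm{T}}\bar{w}_k]\le\bar{\sigma}^2$ bounds a suitable moment of $\bar{D}_N$ by a constant multiple of $\gamma^N$; the bounded-density hypothesis then renders the CDF of the return Lipschitz, converting this tail estimate into the uniform CDF bound \eqref{eq:approximat:bound:LQG}. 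The constant $c_0$ of the fully observable analysis thereby reappears as $\bar{c}_0$, with $\rho_K,\sigma,x$ replaced by $\bar{\rho}_K,\bar{\sigma},\bar{x}$.

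Before declaring the proof complete I would flag one piece of bookkeeping: the summations in \eqref{eq:dist_func:appro:LQG} run to $k=N$, whereas those in \eqref{eq:approxreturn} run to $k=N-1$, so the partially observable truncation retains one extra term. This merely shifts $N \mapsto N+1$ in the tail estimate, which is absorbed into $\bar{c}_0$ (equivalently, it replaces $\gamma^N$ by $\gamma^{N+1}$, at the cost of an extra factor $\gamma^{-1}$ in the constant). This is routine and does not affect the form of the bound.

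The hard part will be the bounded-density hypothesis on $\bar{w}_k$, which I would check most carefully. Because $\bar{v}_t = F[v_t^{\rm{T}}, s_t^{\rm{T}}]^{\rm{T}}$ with the rank-structured matrix $F = \left[\begin{smallmatrix} I & 0 \\ I & -L \end{smallmatrix}\right]$, the image of $(v_t,s_t)\in\mathbb{R}^{n+l}$ has dimension $n+\mathrm{rank}(L)\le n+l$, so whenever $l<n$ the augmented noise $\bar{w}_k$ lives on a proper subspace of $\mathbb{R}^{2n}$ and admits no density with respect to Lebesgue measure there. Hence the assumption in Corollary~\ref{The:LQGapprox} cannot be read literally as absolute continuity of $\bar{w}_k$ on $\mathbb{R}^{2n}$; it must instead be understood as a statement about the distribution induced on the relevant quadratic forms, equivalently about the scalar return $G^{KL}(\bar{x})$. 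I would therefore confirm that the Lipschitz-CDF step in the proof of Theorem~\ref{The:ranretromodelapprox} only ever uses boundedness of the density of the \emph{scalar} return, so that the degeneracy of $F$ does not invalidate the argument; once this is settled, the bound \eqref{eq:approximat:bound:LQG} follows by the same computation as in the fully observable case.
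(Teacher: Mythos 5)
Your proposal takes exactly the route the paper intends: the paper omits the proof of Corollary~\ref{The:LQGapprox}, stating only that it ``can be adapted from that of Theorem~\ref{The:ranretromodelapprox}'' applied to the augmented system \eqref{eq:dynamics:augmented}, which is precisely your instantiation-by-dictionary argument, so the proposal is correct and matches the paper's approach. Your two side remarks go beyond what the paper records and are both valid: the truncation in \eqref{eq:dist_func:appro:LQG} indeed runs to $k=N$ rather than $k=N-1$ as in \eqref{eq:approxreturn} (harmlessly absorbed into $\bar{c}_0$), and since ${\rm rank}(F)=n+{\rm rank}(L)\leq n+l$, the augmented noise $\bar{w}_k$ has no Lebesgue density on $\mathbb{R}^{2n}$ whenever $l<n$ (including the paper's own experiment with $n=3$, $l=2$), so the bounded-PDF hypothesis must indeed be reinterpreted as boundedness of the density of the scalar return --- which, as you note, is all the mean-value-theorem step in the proof of Theorem~\ref{The:ranretromodelapprox} actually uses.
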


We remark that the model-free approximation (Theorem~\ref{The:ranretromodelfreeapprox}) is not applicable for partially observable systems. Unlike the distributional LQR where the state is directly measurable, it is nontrivial to achieve an accurate estimation of the states such that the cumulative estimation error can be controlled arbitrarily small using only the observation sequence $\{y_t\}$ and control sequence $\{u_t\}$ (when the system model is unknown). Thus, we put this problem as a future direction.

\section{Experiments}\label{Sec:simulation}
In this section, we consider an idealized example of data center cooling with three sources coupled to their own cooling devices \cite{yaghmaie2022linear,recht2019tour,dean2020sample} with the dynamics 
$ x_{t+1} = A x_t + B u_t +v_t$, where  
\begin{align*}
    &A = \left[ \begin{array}{ccc}
         1.01&0.01&0  \\
         0.01&1.01&0.01 \\
         0&0.01& 1.01 
    \end{array} \right],
    B =  \left[ \begin{array}{ccc} 1 &0&0 \\ 0&1&0 \\ 0&0&1 \end{array} \right].
\end{align*}
We select $Q=I$ and $R=I$. The exogenous disturbances have standard normal distributions with zero mean. 

Even for this linear system, it is impossible to simplify the expression of the exact return distribution, which still depends on an infinite number of random variables. 
Thus, as a baseline for the return distribution, we generate an empirical distribution by Algorithm~\ref{alg:algorithm_MFPE} with a sufficiently large amount of samples that approximates the true distribution of the random return.
Specifically, we run Algorithm 1 with the parameters $T=3000$ and $M=30000$. By Theorem~\ref{The:ranretromodelfreeapprox}, the maximal difference between the generated empirical distribution and the true one is bounded by $0.0088$ with probability at least $99\%$, which means that the generated empirical distribution is reliably close to the true one.
We use the sample frequency over evenly-divided regions as an approximation of the PDF.

\subsection{LQR}
We first consider the fully observable case. 
We select different values of $\gamma$ and $x_0$, and fix the optimal controller gain  $K=-\gamma(R+\gamma B^TPB)^{-1}PA$, where $P$ is the solution to the classic Riccati equation $P = \gamma A^T P A - \gamma^2 A^T P B (R+\gamma B^T P B)^{-1} B^T P A+Q$. The controller is given by
$$K=-0.01\left[\begin{array}{ccc} 56.19&0.7692&0.0027\\0.7692&56.20&0.7692\\0.0027&0.7692&56.19\end{array}\right].$$

In what follows, we verify the results in Theorem~\ref{The:ranretromodelapprox} by evaluating the quality of the approximation of the return distribution using different numbers of random variables. 
We denote here by $f_N$ the distribution of the approximated random return $G^K_N(x_0)$ in \eqref{eq:approxreturn} obtained considering $N$ random variables.
We compute the constant $c_0$ in equation \eqref{eq:approximat:bound} and the required number of random variables that guarantees $\sup_{z}|{F}^{K}_x(z)-{F}^{K}_{x,N}(z) | \leq 0.01$, meaning that the estimate distribution is sufficiently close to the true distribution. As shown in Table~\ref{tab:my_label}, an increasing number of random variables is needed when dealing with larger values of $\gamma$ and/or $x_0$.
The simulation results are shown in Fig.~\ref{fig:approximation}.
Specifically, Fig.~\ref{fig:approximation} (a) and (c) show that when $\gamma$ is small, the return distribution can be well approximated using only a few random variables ($N=7$ works well). 
However, when $\gamma$ approaches $1$, more random variables are needed for an accurate approximation: as shown in Fig.~\ref{fig:approximation} (b) and (d), we need $N=15$ to have a good approximation of the return distribution in the case of $\gamma=0.8$.

\begin{table}
    \centering
    \begin{tabular}{cccc}
        \toprule
        $\gamma$ & $x_0$ & $c_0$ & $N_0$\\
        \midrule
         0.6& [1;1;1] & 0.5447  & 8 \\
         0.8& [1;1;1] & 0.5917  & 19 \\
         0.6& [6;6;6] & 1.7550  & 11 \\
         0.8& [6;6;6] & 2.6134  & 25 \\
         \bottomrule
    \end{tabular}
    \caption{ Constant $c_0$ in \eqref{eq:approximat:bound} and required number $N_0$ to obtain a good estimate for different values of $\gamma$ and $x_0$ in LQR, where $N_0$ is the smallest  integer such that $\sup_{z}|{F}^{K}_x(z)-{F}^{K}_{x,N_0}(z) | \leq c_0 \gamma^{N_0} \leq 0.01$.} 
    \label{tab:my_label}
\end{table}

Moreover, the value of the initial state $x_0$ has an influence on the shape of the return distribution. 
When $x_0$ is large, the random variable $w_k^T P A_K^{k+1}x_0$ dominates and, therefore, its distribution is close to a Gaussian distribution, as shown in Fig.~\ref{fig:approximation} (c) and (d).
If instead $x_0$ is  small, then the random variable $w_k^T P w_k$ plays a leading role, so the overall distribution is close to the chi-square one, as shown in Fig.~\ref{fig:approximation} (a) and (b). 

\begin{figure}[t]
    \centering
    \begin{subfigure}[b]{0.24\textwidth}
        \centering
        \includegraphics[width=\textwidth]{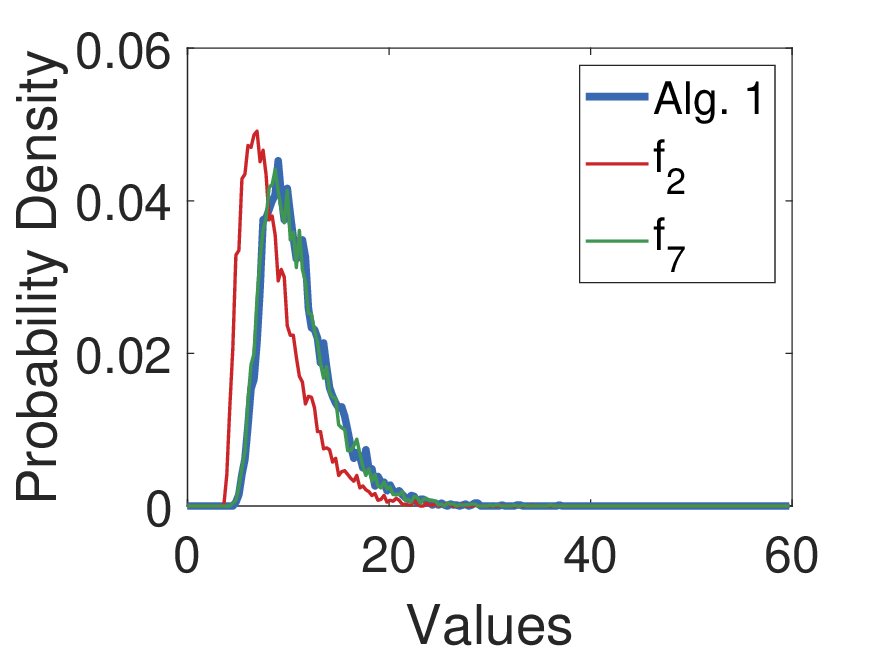}
        \caption{$\gamma=0.6$, $x_0=[1;1;1]$.}
        \label{fig:appro:f1}
    \end{subfigure}
    \hfill
    \begin{subfigure}[b]{0.24\textwidth}
        \centering
        \includegraphics[width=\textwidth]{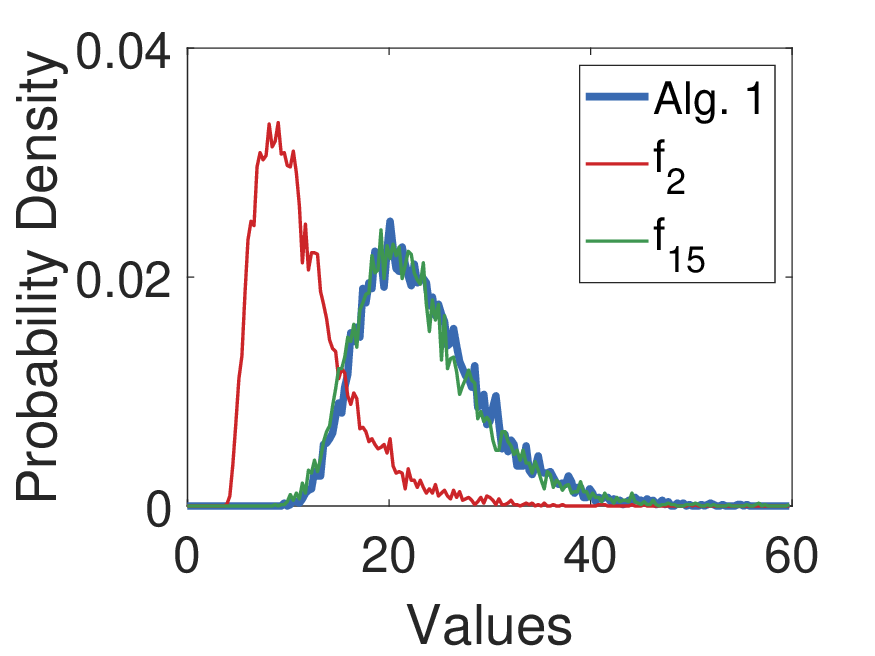}
        \caption{$\gamma=0.8$, $x_0=[1;1;1]$.}
        \label{fig:appro:f2}
    \end{subfigure}
    \hfill
    \begin{subfigure}[b]{0.24\textwidth}
        \centering
        \includegraphics[width=\textwidth]{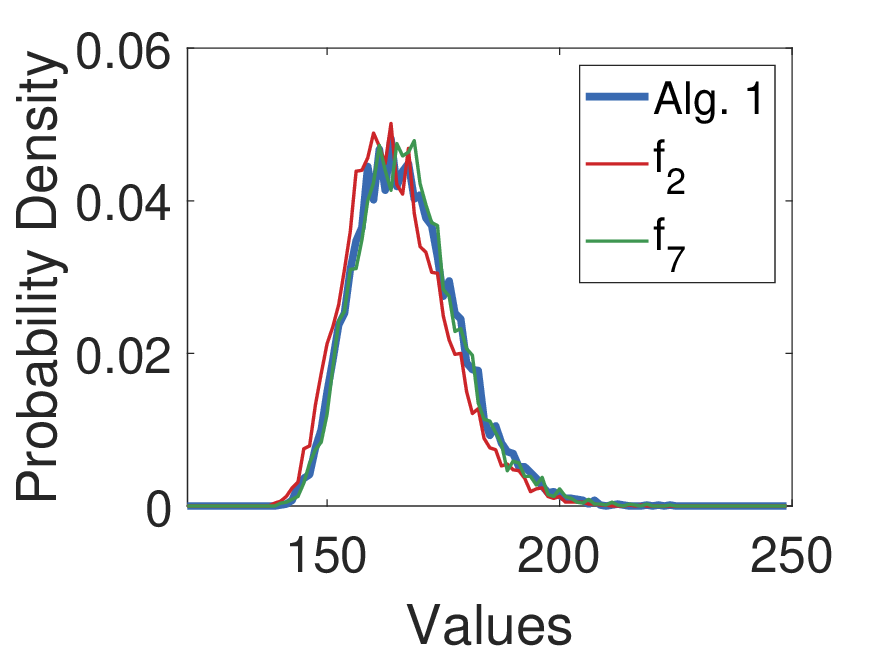}
        \caption{$\gamma=0.6$, $x_0=[6;6;6]$.}
        \label{fig:appro:f3}
    \end{subfigure}
    \hfill
    \begin{subfigure}[b]{0.24\textwidth}
        \centering
        \includegraphics[width=\textwidth]{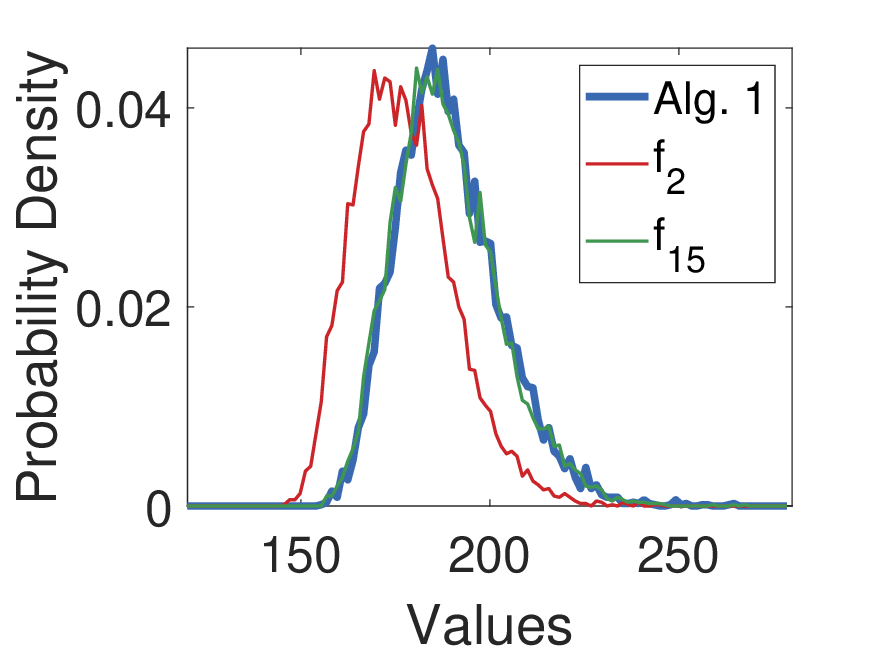}
        \caption{$\gamma=0.8$, $x_0=[6;6;6]$.}
        \label{fig:appro:f4}
    \end{subfigure}
    \caption{Return distribution and its approximation with finite number of random variables for different values of $\gamma$ and $x_0$ in LQR. Alg. 1 denotes the distribution returned by Algorithm~\ref{alg:algorithm_MFPE} and $f_N$ denotes the distribution of the approximated random return $G^K_N(x_0)$.} \label{fig:approximation}
\end{figure}

Next we perturb the matrices $A$, $B$ by an amount $\epsilon_A A$ and $\epsilon_B B$, respectively.  We select $x_0=[1;1;1]$. 
We compute the constants of $\tilde{c}_1$, $\tilde{c}_2$, the true sup difference between original and perturbed distributions, and the upper bounds in \eqref{eq:pert:bound}. The results are shown in Table~\ref{tab:2}. We observe that the perturbed distribution becomes significantly distinct from the original distribution when $\gamma$, $\epsilon_A$, and $\epsilon_B$ take on larger values. We also note that our computational upper bound becomes conservative when $\gamma$ is close to $1$.
The perturbed return distributions for different values of $\epsilon_A$ and $\epsilon_B$ are shown in Fig.~\ref{fig:perturbation}. We observe that large perturbations change the distributions dramatically.
\begin{figure}[t]
    \centering
    \begin{subfigure}[b]{0.24\textwidth}
        \centering
        \includegraphics[width=\textwidth]{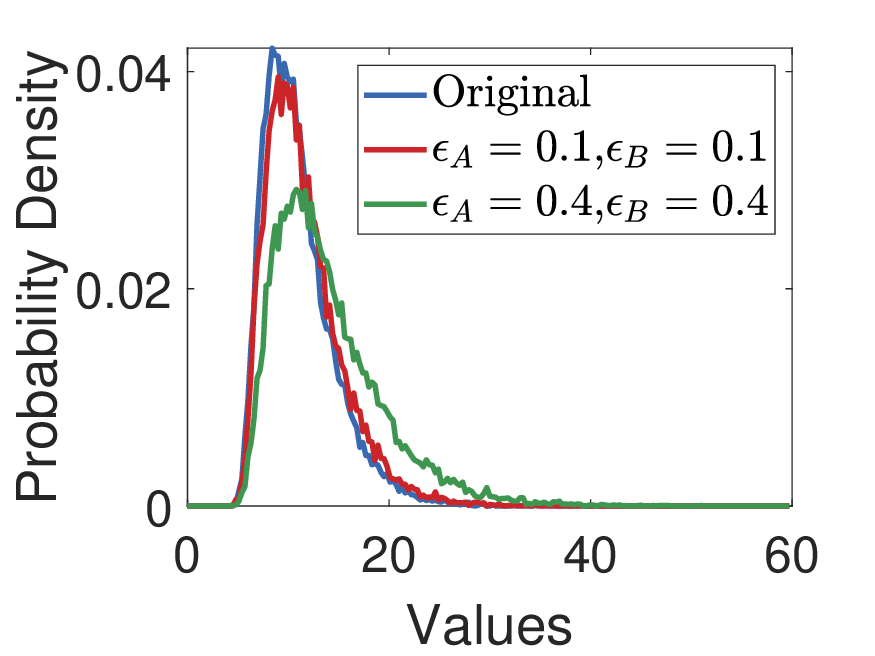}
        \caption{$\gamma=0.6$.}
        \label{fig:pert:f1}
    \end{subfigure}
    \hfill
    \begin{subfigure}[b]{0.24\textwidth}
        \centering
        \includegraphics[width=\textwidth]{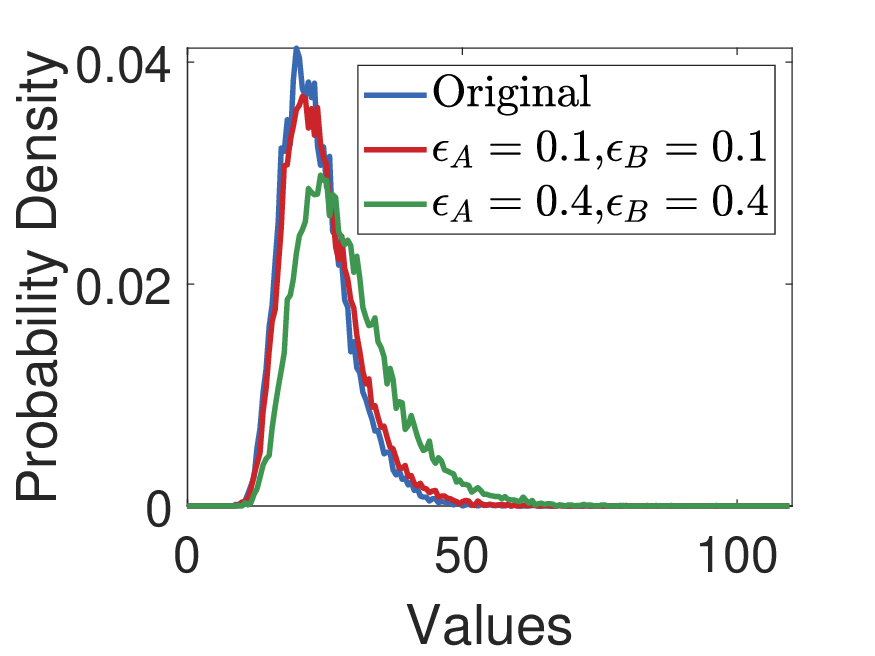}
        \caption{$\gamma=0.8$.}
        \label{fig:pert:f2}
    \end{subfigure}
    \caption{Original and perturbed return distributions for different values of $\gamma$, $\epsilon_A$ and $\epsilon_B$ in LQR.} \label{fig:perturbation}
\end{figure}

\begin{table}
    \centering
    \begin{tabular}{ccccccc}
        \toprule
        $\gamma$ & $\epsilon_A$ & $\epsilon_B$ & $\tilde{c}_1$ & $\tilde{c}_2$ & Sup difference & UB   \\
        \midrule
         0.6& 0.1 & 0.1 & 6.5  & 4.6 & 0.051 & 0.33\\
         0.6& 0.4 & 0.4 & 20.3  & 11.9 & 0.24 & 0.52\\
         0.8& 0.1 & 0.1 & 12.4  & 9.9 & 0.056 &  0.53\\
         0.8& 0.4 & 0.4 & 30.5  & 20.9 & 0.26 &  0.80\\
         \bottomrule
    \end{tabular}
    \caption{ Computation of the actual maximal difference between the perturbed and the original distributions, and computational upper bound (UB) for different values of $\gamma$, $\epsilon_A$ and $\epsilon_B$ in LQR. The constants $\tilde{c}_1$ and $\tilde{c}_2$ are those in \eqref{eq:pert:bound}. Sup difference is the value of $\sup_{z}|{F}^{K}_x(z)-\tilde{F}^{K}_{x}(z) |$ while UB is the value of $\tilde{c}_1 \left\|\Delta A_K \right\| + \tilde{c}_2 \left\| \Delta A_K \right\|^2$. }
    \label{tab:2}
\end{table}

\subsection{LQG}
In this section, we assume that the system is partially observable and we have the observation $y_t = C x_t + s_t$, where $C=[1,0,0;0,1,0]$. We assume that the disturbance $s_t$ is normally distributed with zero mean. 
We design the state estimator and controller
\begin{align}
    & \hat{x}_{t+1} = A \hat{x}_t + B u_t + L(y_t - C  \hat{x}_t), \nonumber \\
    & u_t = K \hat{x}_t. \nonumber
\end{align}
where the controller is selected the same as that in LQR and the observer is selected as $L=[0.21,0.01;0.01,0.32;0,2.32]$.
We set $x_0 = [1;1;1]$, $\hat{x}_0=[0;0;0]$. The simulation results for LQG are presented in Fig.~\ref{fig:LQG}.
Similarly, we denote by $f_N$ the distribution of the approximated random return $G^{KL}_N(\bar{x})$ in \eqref{eq:dist_func:appro:LQG} obtained based on $N$ random variables.
We use the Monte Carlo (MC) method with sufficiently many data to construct an empirical distribution that serves as the baseline distribution for comparison. 
As shown in Fig.~\ref{fig:LQG}, when $\gamma=0.6$, we need $N=8$ number of random variables to obtain a good approximation of the return distribution. When $\gamma = 0.8$, a greater number $N=17$ is needed to achieve reliable approximation of the return distribution.
\begin{figure}[t]
    \centering
    \begin{subfigure}[b]{0.24\textwidth}
        \centering
        \includegraphics[width=\textwidth]{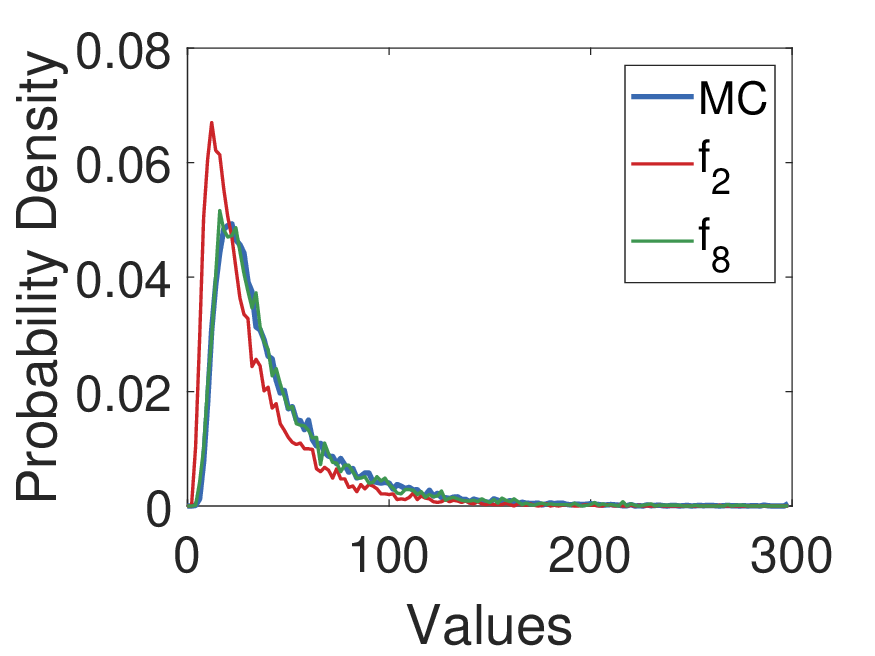}
        \caption{$\gamma=0.6$.}
        \label{fig:lqg:f1}
    \end{subfigure}
    \hfill
    \begin{subfigure}[b]{0.24\textwidth}
        \centering
        \includegraphics[width=\textwidth]{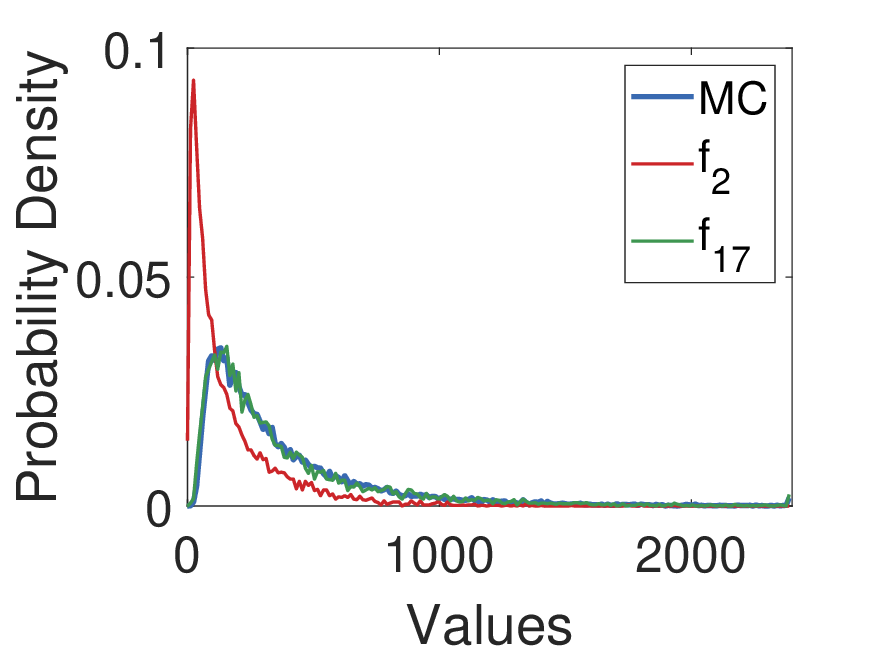}
        \caption{$\gamma=0.8$.}
        \label{fig:lqg:f2}
    \end{subfigure}
    \caption{Return distribution and its approximation with finite number of random variables for different values of $\gamma$ in LQG. MC denotes the distribution estimated using the Monte Carlo method and $f_N$ denotes the distribution of the approximated random return $G^{KL}_N(\bar{x})$.} \label{fig:LQG}
\end{figure}

\section{Conclusions}\label{sec:conclusion}
We have proposed a new distributional approach to the classic discounted LQR problem. Specifically, we have first provided an analytic expression for the exact random return  that depends on infinitely many random variables. 
In this context, we have shown that the variance remains bounded if the fourth moment of the disturbance is bounded. Furthermore, we have conducted an analysis of distribution sensitivity.  
Besides, we have proposed a model-free method for evaluating the return distribution, with theoretical analysis of its sample complexity.
Since the computation of this expression is difficult in practice, we have also proposed an approximate expression for the distribution of the random return that only depends on a finite number of random variables, and have further characterised the approximation error. 
Moreover, we have extended most of the above results for LQR to the partially observable case.

This work provides a framework for distributional LQR: it inherits the advantages of DRL methods compared to standard RL ones that rely on the expected return to evaluate a given policy, but it also provides an analytic expression for the return distribution, an aspect where current DRL methods significantly lack. 
Our framework provides richer information for linear control systems, i.e., the whole distribution of the random return, and enables us to consider more general objectives, e.g., risk-averse control. 
Future research includes exploring policy improvement for risk-averse control using the learned return distribution.

\appendix\label{sec:appendix}
\subsection{Proof of Theorem~\ref{Thm:exact:dist}}\label{app:thm:exact:dist}
Recall that $x' = A_K x + v_0$, where $v_0$ is a  random variable sampled from the distribution $\mathcal{D}$ and is independent from $w_k$, $k\in \mathbb{N}$, in \eqref{eq:dist_func}. 
Substituting \eqref{eq:dist_func} into the right hand side of the equation \eqref{eq:rv:bellman}, we have that
\begin{align*}
    & x^{\rm{T}}(Q+K^{\rm{T}} R K)x + \gamma G^{K}(x') \\
    =  & x^{\rm{T}}Q_Kx + \gamma x'^{\rm{T}} P x' +  \sum_{t=0}^{\infty} \gamma^{t+2} w_t^{\rm{T}} P w_t \nonumber \\
    & + 2 \sum_{t=0}^{\infty} \gamma^{t+2} w_t^{\rm{T}}  P A_K^{t+1}  x' \\
    &+ 2  \sum_{t=1}^{\infty} \gamma^{t+2} w_t^{\rm{T}} P A_K \sum_{i=0}^{t-1} A_K^{t-1-i} w_i  \\
    = & x^{\rm{T}}Q_Kx + \gamma (A_K x + v_0)^{\rm{T}} P (A_K x + v_0) \\
    &+ \gamma^2 \sum_{t=0}^{\infty} \gamma^t w_t^{\rm{T}} P w_t  + 2 \gamma^2 \sum_{t=1}^{\infty} \gamma^t w_t^{\rm{T}} P \sum_{i=0}^{t-1} A_K^{t-i} w_i \\
    &+ 2\gamma^2 \sum_{t=0}^{\infty} \gamma^t w_t^{\rm{T}}  P A_K^{t+1}  (A_K x + v_0) \\
    = & x^{\rm{T}}(Q_K + \gamma A_K^{\rm{T}} P A_K)x + \underbrace{\gamma v_0^{\rm{T}} P v_0 + \gamma^2 \sum_{t=0}^{\infty} \gamma^t w_t^{\rm{T}} P w_t }_{:=T_1} \\
    & +\underbrace{ 2\gamma v_0^{\rm{T}} PA_K x + 2\gamma^2 \sum_{t=0}^{\infty} \gamma^t w_t^{\rm{T}}  P A_K^{t+2} x}_{:=T_2} \\
    & + \underbrace{2 \gamma^2 \sum_{t=1}^{\infty} \gamma^t w_t^{\rm{T}} P  \sum_{i=0}^{t-1} A_K^{t-i} w_i + 2\gamma^2 \sum_{t=0}^{\infty} \gamma^t w_t^{\rm{T}}  P  A_K^{t+1} v_0}_{:=T_3}.
\end{align*}
Define $ \xi_0 :=v_0$, $\xi_t = w_{t-1}$, $t=1,2,\ldots$. 
From the definition of the term $T_1$, we have that
\begin{align*}
    T_1 & = \gamma v_0^{\rm{T}} P v_0 + \gamma^2 \sum_{t=0}^{\infty} \gamma^t w_t^{\rm{T}} P w_t \\
     & \mathop{=}^{k=t+1}  \gamma \xi_0^{\rm{T}} P \xi_0 + \gamma \sum_{k=1}^{\infty} \gamma^k \xi_{k}^{\rm{T}} P \xi_{k} 
    =  \gamma \sum_{k=0}^{\infty} \gamma^k \xi_{k}^{\rm{T}} P \xi_{k}.
\end{align*}
For the term $T_2$, we have that
\begin{align*}
    T_2 & = 2\gamma v_0^{\rm{T}} PA_K x + 2\gamma^2 \sum_{t=0}^{\infty} \gamma^t w_t^{\rm{T}}  P A_K^{t+2} x \\
    &= 2\gamma \xi_0^{\rm{T}} P A_K x + 2\gamma^2 \sum_{t=0}^{\infty} \gamma^t \xi_{t+1}^{\rm{T}}  P A_K^{t+2} x \\
    &\mathop{=}^{k=t+1}  2\gamma \xi_0^{\rm{T}} P A_K x + 2\gamma \sum_{k=1}^{\infty} \gamma^k \xi_{k}^{\rm{T}}  P A_K^{k+1} x \\
    & =  2\gamma  \sum_{k=0}^{\infty} \gamma^k \xi_k^{\rm{T}} P A_K^{k+1} x.
\end{align*}
Using similar techniques for the next term, we obtain that
$T_3 = 2 \gamma \sum_{k=1}^{\infty} \gamma^{k} \xi_{k}^{\rm{T}} P A_K  \sum_{i=0}^{k-1} A_K^{k-1-i} \xi_i.$ 
Due to the fact that $P =Q+ K^{\rm{T}} R K + \gamma A_K^{\rm{T}} P A_K$, we have 
\begin{align}\label{eq:proof:P1:temp1}
    & x^{\rm{T}}Q_Kx + \gamma G^{K}(x') 
    =  x^{\rm{T}} P x + T_1 +T_2 +T_3 \nonumber \\
    = & x^{\rm{T}} P x + \gamma \sum_{k=0}^{\infty} \gamma^k \xi_{k}^{\rm{T}} P \xi_{k} + 2\gamma  \sum_{k=0}^{\infty} \gamma^k x^{\rm{T}} P A_K^{k+1} \xi_k \nonumber \\
    & + 2 \gamma \sum_{k=1}^{\infty} \gamma^{k} \xi_{k}^{\rm{T}} P A_K  \sum_{i=0}^{k-1} A_K^{k-1-i} \xi_i,
\end{align}
which is in the same form as in \eqref{eq:dist_func}.
Since $\{ \xi_k\}_{k=0}^{\infty}$ and $\{ w_k\}_{k=0}^{\infty}$ are i.i.d.,
we have that the two random variables \eqref{eq:dist_func} and \eqref{eq:proof:P1:temp1} have the same distribution, i.e.,
$G^{K}(x)  \mathop{=}\limits^{D}  x^{\rm{T}}Q_Kx + \gamma G^{K}(x').$

\subsection{Proof of Theorem \ref{thm:variance}}\label{app:thm:variance}

By virtue of Jensen's Inequality, we have $\mathbb{E}^2[\left\| w_k\right\|^2] \leq \mathbb{E}[\left\| w_k \right\|^4]$ and $\mathbb{E}^2[\left\| w_k\right\|] \leq \mathbb{E}[\left\| w_k \right\|^2]$. Therefore, we have $\mathbb{E}[\left\| w_k\right\|^2]\leq \sigma_4^2$ and $\mathbb{E}[\left\| w_k\right\|]\leq \sigma_4$.
Since $(a+b+c+d)^2 \leq 4 (a^2 +b^2 +c^2 +d^2)$, we have
\begin{align}\label{eq:pf:var:1}
    &\mathbb{E}\Big[G^K(x) G^K(x)\Big] \nonumber \\
    &\leq 4\mathbb{E}\Big[  (x^{\rm{T}} P x)^2 + \big(\sum_{k = 0 }^{\infty}  \gamma^{k+1}  w_k^{\rm{T}} P w_k\big)^2 \nonumber \\
    &\quad +  \big(2 \sum_{k = 0 }^{\infty} \gamma^{k+1} w_k^{\rm{T}} P A_K^{k+1}x\big)^2  \nonumber \\
    &\quad + \big( 2  \sum_{k = 1 }^{\infty} \gamma^{k+1} w_k^{\rm{T}} P \sum_{\tau=0}^{k-1} A_K^{k-\tau}w_{\tau}\big)^2 \Big].
\end{align}
We handle the terms one by one. The first term can be easily bounded by 
\begin{align}\label{eq:pf:var:2}
    (x^{\rm{T}} P x)^2 \leq \left\|x\right\|^4 \left\| P \right\|^2.
\end{align}
By virtue of the Cauchy Product that $\Big(\sum_{k=0}^{\infty} a_k \Big)^2 = \sum_{k=0}^{\infty} \sum_{l=0}^k a_l a_{k-l}$, the second term can be bounded by 
\begin{align}\label{eq:pf:var:3}
    &\mathbb{E} \Big[ \big(\sum_{k = 0 }^{\infty}  \gamma^{k+1}  w_k^{\rm{T}} P w_k\big)^2 \Big] \nonumber \\
    & \leq \mathbb{E} \Big[ \big(\sum_{k = 0 }^{\infty}  \gamma^{k+1} \left\| w_k\right\|^2 \left\| P \right\|\big)^2 \Big] \nonumber \\
    & = \left\| P \right\|^2 \mathbb{E} \Big[ \sum_{k = 0 }^{\infty} \gamma^{k+2} \sum_{l=0}^k \left\| w_l\right\|^2 \left\| w_{k-l}\right\|^2 \Big] \nonumber \\
     & = \left\| P \right\|^2  \sum_{k = 0 }^{\infty} \gamma^{k+2} \sum_{l=0}^k \mathbb{E} \Big[\left\| w_l\right\|^2 \left\| w_{k-l}\right\|^2\Big]  \nonumber \\
    &\leq \left\| P \right\|^2 \sum_{k = 0 }^{\infty} \gamma^{k+2} (k+1) \sigma_4^4 \nonumber \\
    &= \sigma_4^4  \left\| P \right\|^2 \frac{\gamma^2}{(1-\gamma)^2}.
\end{align}
The first inequality holds since $w_kPw_k \geq 0$.  The second inequality holds since $\mathbb{E}[\left\| w_l\right\|^2 \left\| w_{k-l}\right\|^2] = \mathbb{E}[\left\| w_l\right\|^2] \mathbb{E}[\left\| w_{k-l}\right\|^2] \leq \sigma_4^4$ when $l\neq k-l$ and $\mathbb{E}[\left\| w_l\right\|^2 \left\| w_{k-l}\right\|^2] = \mathbb{E}[\left\| w_l\right\|^4] \leq \sigma_4^4$ when $l=k-l$. 
The last equality holds since $\sum_{k = 0 }^{\infty} \gamma^{k+2} (k+1) = \frac{\gamma^2}{(1-\gamma)^2}$.

For the third term, we have
\begin{align}\label{eq:pf:var:4}
    &\mathbb{E}\Big[ \big(2 \sum_{k = 0 }^{\infty} \gamma^{k+1} w_k^{\rm{T}} P A_K^{k+1}x\big)^2\Big] \nonumber \\
    & = 4 \mathbb{E}\Big[ \sum_{k = 0 }^{\infty} \sum_{l=0}^k \gamma^{l+1} w_l^{\rm{T}} P A_K^{l+1}x \gamma^{k-l+1} w_{k-l}^{\rm{T}} P A_K^{k-l+1}x \Big] \nonumber \\
    & \leq 4 \left\| P\right\|^2 \left\| x\right\|^2 \mathbb{E}\Big[\sum_{k = 0 }^{\infty} \gamma^{k+2} \sum_{l=0}^k \left\| w_l \right\| \left\| A_K^{l+1}\right\| \left\|w_{k-l}\right\| \nonumber \\
    & \quad  \times \left\| A_K^{k-l+1}\right\| \Big]\nonumber \\
    &\leq 4 \left\| P\right\|^2 \left\| x\right\|^2 \sum_{k = 0 }^{\infty} (\gamma\rho)^{k+2}  \sum_{l=0}^k \mathbb{E}\Big[ \left\| w_l \right\| \left\|w_{k-l}\right\| \Big] \nonumber \\
    &\leq 4 \left\| P\right\|^2 \left\| x\right\|^2 \sigma_4^2  \sum_{k = 0 }^{\infty} (k+1) (\gamma \rho_K)^{k+2} \nonumber \\
    &= 4 \left\| P\right\|^2 \left\| x\right\|^2 \sigma_4^2 \frac{\gamma^2 \rho_K^2}{(1-\gamma \rho_K)^2}.
\end{align}
The first equality follows from the Cauchy Product. The first inequality follows from  $w_l^{\rm{T}} P A_K^{l} x \leq \left\| P\right\| \left\|  A_K^{l} \right\| \left\| w_l\right\| \left\| x\right\|$ and $w_{k-l}^{\rm{T}} P A_K^{k-l+1} x \leq \left\| P\right\| \left\|  A_K^{k-l+1} \right\| \left\| w_{k-l}\right\| \left\| x\right\|$. 
The second inequality holds since $\left\| A_K^{l} \right\| \leq \left\| A_K\right\|^{l}\leq \rho_K^{l}$. The third inequality holds since $\mathbb{E}[ \left\| w_l \right\| \left\|w_{k-l}\right\|] \leq \sigma_4^2$ when $l=k-l$ and $l\neq k-l$. The last equality holds since $\sum_{k = 0 }^{\infty} (\gamma\rho)^{k+2} (k+1) = \frac{\gamma^2\rho^2}{(1-\gamma\rho)^2}$. 

For the fourth term, by virtue of the Cauchy Product, we have
\begin{align}\label{eq:pf:var:5}
    &\mathbb{E}\Big[ \big( 2  \sum_{k = 1 }^{\infty} \gamma^{k+1} w_k^{\rm{T}} P \sum_{\tau=0}^{k-1} A_K^{k-\tau}w_{\tau}\big)^2\Big] \nonumber \\
      &= \mathbb{E}\Big[ \big( 2  \sum_{k = 0 }^{\infty} \gamma^{k+2} w_{k+1}^{\rm{T}} P \sum_{\tau=0}^{k} A_K^{k+1-\tau}w_{\tau}\big)^2\Big] \nonumber \\
    &=4 \mathbb{E}\Big[ \sum_{k=0}^{\infty} \gamma^{k+4} \sum_{l=0}^k w_{l+1}^{\rm{T}} P(\sum_{\tau=0}^{l} A_K^{l+1-\tau} w_{\tau}) \nonumber \\
    &\quad \times w_{k-l+1}^{\rm{T}} P (\sum_{\tau=0}^{k-l} A_K^{k-l+1-\tau} w_{\tau}) \Big]. 
\end{align}

Let $\xi :=  w_{l+1}^{\rm{T}} P(\sum_{\tau=0}^{l} A_K^{l+1-\tau} w_{\tau})  \times w_{k-l+1}^{\rm{T}} P (\sum_{\tau=0}^{k-l} A_K^{k-l+1-\tau} w_{\tau})$. Recall that  the random variables $w_k$ are independent from each other  and $\mathbb{E}[w_{k}]=0$ for all $k\in\mathbb{N}$. It yields that  when $l>k-l$, $\mathbb{E}[\xi]=0$, and when $l<k-l$,  $\mathbb{E}[\xi]=0$.  Thus, \eqref{eq:pf:var:5} can be simplified to be with the items when $k=2l$, i.e., 
\begin{align}\label{eq:pf:var:6}
    &\mathbb{E}\Big[ \big( 2  \sum_{k = 1 }^{\infty} \gamma^{k+1} w_k^{\rm{T}} P \sum_{\tau=0}^{k-1} A_K^{k-\tau}w_{\tau}\big)^2\Big] \nonumber \\
    & =4 \mathbb{E}\Big[ \sum_{k=0}^{\infty} \gamma^{k+4} \sum_{l=0}^k w_{l+1}^{\rm{T}} P(\sum_{\tau=0}^{l} A_K^{l+1-\tau} w_{\tau}) \nonumber \\
    &\quad \times w_{k-l+1}^{\rm{T}} P (\sum_{\tau=0}^{k-l} A_K^{k-l+1-\tau} w_{\tau}) \Big] \nonumber \\
     & =4 \mathbb{E}\Big[ \sum_{l=0}^{\infty} \gamma^{2l+4} \big(w_{l+1}^{\rm{T}} P(\sum_{\tau=0}^{l} A_K^{l+1-\tau} w_{\tau}) \big)^2\Big] \nonumber \\ 
    & \leq 4 \left\| P\right\|^2 \sum_{l=0}^{\infty} \gamma^{2l+4} \mathbb{E}\Big[ \left\| w_{l+1}\right\|^2\Big]   \mathbb{E}\Big[\left\| \sum_{\tau=0}^{l} A_K^{l+1-\tau} w_{\tau}\right\|^2 \Big] \nonumber \\
     & \leq 4 \left\| P\right\|^2 \sigma_4^2 \sum_{l=0}^{\infty} \gamma^{2l+4}  \mathbb{E}\Big[ \left\| \sum_{\tau=0}^{l} A_K^{l+1-\tau} w_{\tau}\right\|^2 \Big] \nonumber \\
          & \leq 4 \left\| P\right\|^2 \sigma_4^2 \sum_{l=0}^{\infty} \gamma^{2l+4}  \mathbb{E}\Big[\big( \sum_{\tau=0}^{l} w_{\tau}^{\rm{T}}(A_K^{l+1-\tau})^{\rm{T}} \big) \nonumber \\ 
          &\quad \times  \big( \sum_{\tau=0}^{l} A_K^{l+1-\tau} w_{\tau}\big) \Big] \nonumber \\ 
           & = 4 \left\| P\right\|^2 \sigma_4^2 \mathbb{E}\Big[\sum_{l=0}^{\infty} \gamma^{2l+4}  \sum_{\tau=0}^{l} \sum_{\kappa=0}^{l}\big( w_{\tau}^{\rm{T}}(A_K^{l+1-\tau})^{\rm{T}}  A_K^{l+1-\kappa} w_{\kappa}\big) \Big] \nonumber \\ 
           & = 4 \left\| P\right\|^2 \sigma_4^2 \sum_{l=0}^{\infty} \gamma^{2l+4}  \sum_{\tau=0}^{l} \sum_{\kappa=0}^{l}\mathbb{E}\Big[\big( w_{\tau}^{\rm{T}}(A_K^{l+1-\tau})^{\rm{T}}  A_K^{l+1-\kappa} w_{\kappa}\big) \Big] \nonumber \\ 
              & = 4 \left\| P\right\|^2 \sigma_4^2 \sum_{l=0}^{\infty} \gamma^{2l+4}  \sum_{\tau=0}^{l} \mathbb{E}\Big[\big( w_{\tau}^{\rm{T}}(A_K^{l+1-\tau})^{\rm{T}}  \times A_K^{l+1-\tau} w_{\tau}\big) \Big] \nonumber \\ 
           & \leq  4 \left\| P\right\|^2 \sigma_4^2 \sum_{l=0}^{\infty} \gamma^{2l+4}  \sum_{\tau=0}^{l} \rho_K^{2(l+1-\tau)}\mathbb{E} \Big[ \left\|w_{\tau}\right\|^2 \Big] \nonumber \\ 
       &\leq 4 \left\| P\right\|^2 \sigma_4^4 \sum_{l=0}^{\infty} \gamma^{2l+4}  \sum_{\tau=0}^{l} \rho_K^{2(l+1-\tau)} \nonumber \\ 
    &\leq 4 \left\| P\right\|^2 \sigma_4^4 \sum_{l=0}^{\infty} \gamma^{2l+4} \frac{\rho_K^2}{1-\rho_K^2} \nonumber \\
    &\leq \frac{4 \left\| P\right\|^2 \sigma_4^4 \rho_K^2 \gamma^4}{(1-\rho_K^2)(1-\gamma^2)}.
\end{align}
The first inequality holds since $w_{l+1}$ and $w_{\tau}$ are independent for all $\tau=0,\ldots,l$. The last equality holds since 
$w_{\tau}$ and $w_{\kappa}$ are independent for $\tau \neq \kappa$ and $\mathbb{E}[w_{\tau}]=0$ for all $\tau=0,\ldots,l$. 
The second to last inequality holds since $\sum_{\tau=0}^{l} \rho_K^{2(l+1-\tau)} \leq \frac{\rho_K^2}{1-\rho_K^2}$.

Combining \eqref{eq:pf:var:2}, \eqref{eq:pf:var:3}, \eqref{eq:pf:var:4}, \eqref{eq:pf:var:6} and \eqref{eq:pf:var:1}, we have
\begin{align*}
    &\mathbb{E}\Big[G^K(x) G^K(x)\Big] \nonumber \\
    &\leq 4\left\|x\right\|^4 \left\| P \right\|^2 + \frac{4 \sigma_4^4  \left\| P \right\|^2 \gamma^2}{(1-\gamma)^2} +  \frac{16 \left\| P\right\|^2 \left\| x\right\|^2 \sigma_4^2\gamma^2 \rho_K^2}{(1-\gamma \rho_K)^2} \nonumber \\
    &\quad +\frac{16 \left\| P\right\|^2 \sigma_4^4 \rho_K^2 \gamma^4}{(1-\rho_K^2)(1-\gamma^2)}.
\end{align*}
Since $ \mathbb{E}\big[G^K(x)\big]$ is bounded, the variance of $G^K(x)$ is bounded.
The proof is complete.

\subsection{Proof of Theorem~\ref{thm:perturb}}\label{app:thm:perturb}
Before analyzing the effect of perturbations on the return distribution, it is necessary to investigate how perturbations affect the solution to the Lyapunov equation. The following lemma presents the well-known sensitivity result for LQR.
\begin{lemma}\label{lemma:perturbation_bound} \cite{laub1990sensitivity}
Let $X$ be the unique solution of the Lyapunov equation $X = Q +A^{\rm{T}} X A$ for a stable matrix $A$. 
Let $\tilde{X}$ be the unique solution of the perturbed Lyapunov equation $\tilde{X}=Q+\tilde{A}^{\rm{T}} \tilde{X} \tilde{A}$ for a stable matrix $\tilde{A}=A+ \Delta A$. Then, when $l_0> 2\epsilon_0$, where $l_0= \left\| H_0^{-1}\right\|^{-1}$, $H_0 = I \otimes I - A^{\rm{T}} \otimes A^{\rm{T}}$, and $\epsilon_0 = \left\|A \right\|_F \left\| \Delta A \right\|_F + \frac{1}{2} \left\| \Delta A \right\|_F^2 $,
we have 
\begin{align*}
    \left\| X - \tilde{X}\right\|_F \leq \frac{2  \left\| X\right\|_F \epsilon_0 }{l_0-2\epsilon_0}.
\end{align*}
\end{lemma}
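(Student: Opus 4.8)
The plan is to recast both Lyapunov equations as linear systems in vectorised form and then treat $\tilde{X}-X$ as a first-order perturbation of a linear solve, controlled by a Neumann-series argument. First I would apply the identity $\operatorname{vec}(MYN)=(N^{\rm{T}}\otimes M)\operatorname{vec}(Y)$ to rewrite $X=Q+A^{\rm{T}}XA$ as $H_0\operatorname{vec}(X)=\operatorname{vec}(Q)$ with $H_0=I\otimes I-A^{\rm{T}}\otimes A^{\rm{T}}$, and likewise the perturbed equation as $\tilde{H}_0\operatorname{vec}(\tilde{X})=\operatorname{vec}(Q)$ with $\tilde{H}_0=I\otimes I-\tilde{A}^{\rm{T}}\otimes\tilde{A}^{\rm{T}}$. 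Since $A$ and $\tilde{A}$ are stable, the eigenvalues of $A^{\rm{T}}\otimes A^{\rm{T}}$ are the products $\lambda_i\lambda_j$ of eigenvalues of $A$, all of modulus less than one, so $1-\lambda_i\lambda_j\neq 0$; hence $H_0$ (and similarly $\tilde{H}_0$) is invertible, and $\operatorname{vec}(X)=H_0^{-1}\operatorname{vec}(Q)$, $\operatorname{vec}(\tilde{X})=\tilde{H}_0^{-1}\operatorname{vec}(Q)$ are well defined.

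Next I would expand $\Delta H:=\tilde{H}_0-H_0$. Writing $\tilde{A}^{\rm{T}}=A^{\rm{T}}+\Delta A^{\rm{T}}$ and using bilinearity of the Kronecker product gives $\Delta H=-\bigl(A^{\rm{T}}\otimes\Delta A^{\rm{T}}+\Delta A^{\rm{T}}\otimes A^{\rm{T}}+\Delta A^{\rm{T}}\otimes\Delta A^{\rm{T}}\bigr)$. The key numerical estimate is then $\left\|\Delta H\right\|_F\le 2\left\|A\right\|_F\left\|\Delta A\right\|_F+\left\|\Delta A\right\|_F^2=2\epsilon_0$, which follows from the multiplicativity $\left\|M\otimes N\right\|_F=\left\|M\right\|_F\left\|N\right\|_F$ together with $\left\|A^{\rm{T}}\right\|_F=\left\|A\right\|_F$. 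Since the spectral norm is dominated by the Frobenius norm, $\left\|\Delta H\right\|\le 2\epsilon_0$ holds as well.

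I would then control $\tilde{H}_0^{-1}$ via the resolvent identity $\tilde{H}_0^{-1}-H_0^{-1}=-\tilde{H}_0^{-1}\,\Delta H\,H_0^{-1}$, which upon right-multiplying by $\operatorname{vec}(Q)$ yields $\operatorname{vec}(\tilde{X}-X)=-\tilde{H}_0^{-1}\,\Delta H\,\operatorname{vec}(X)$. Factoring $\tilde{H}_0=H_0\bigl(I+H_0^{-1}\Delta H\bigr)$ and invoking the Neumann bound $\left\|(I+M)^{-1}\right\|\le(1-\left\|M\right\|)^{-1}$, valid when $\left\|M\right\|<1$, together with $\left\|H_0^{-1}\right\|=1/l_0$ and $\left\|H_0^{-1}\Delta H\right\|\le 2\epsilon_0/l_0<1$ (guaranteed precisely by the hypothesis $l_0>2\epsilon_0$), gives $\left\|\tilde{H}_0^{-1}\right\|\le(l_0-2\epsilon_0)^{-1}$. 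Combining the bounds with $\left\|\operatorname{vec}(X-\tilde{X})\right\|_2=\left\|X-\tilde{X}\right\|_F$ and $\left\|\operatorname{vec}(X)\right\|_2=\left\|X\right\|_F$ produces the claimed estimate $\left\|X-\tilde{X}\right\|_F\le 2\epsilon_0\left\|X\right\|_F/(l_0-2\epsilon_0)$.

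The only delicate point is the bookkeeping between the Frobenius and spectral norms: $\epsilon_0$ is defined through Frobenius norms whereas $l_0$ is defined through the spectral norm of $H_0^{-1}$, so I must ensure each inequality uses the compatible norm — spectral norm for operator-norm submultiplicativity in the Neumann step, Frobenius norm for bounding $\left\|\Delta H\right\|_F$ and for the isometry $\left\|\operatorname{vec}(\cdot)\right\|_2=\left\|\cdot\right\|_F$ — and bridge the mismatch through the elementary inequality $\left\|\cdot\right\|\le\left\|\cdot\right\|_F$. Everything else is the standard first-order perturbation analysis of a linear system. I note that this is the classical sensitivity result of \cite{laub1990sensitivity}, so the argument above is essentially a self-contained reconstruction rather than a new proof.
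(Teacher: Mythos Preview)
Your argument is correct: the vectorisation, the expansion of $\Delta H$ with the Frobenius-norm bound $\left\|\Delta H\right\|_F\le 2\epsilon_0$, the resolvent identity, and the Neumann-series control of $\tilde{H}_0^{-1}$ all go through as you describe, and the norm bookkeeping between $\left\|\cdot\right\|$ and $\left\|\cdot\right\|_F$ is handled properly. The paper, however, does not prove this lemma at all --- it is stated with a citation to \cite{laub1990sensitivity} and used as a black box in the proof of Theorem~\ref{thm:perturb}. So your write-up is a self-contained reconstruction of the classical sensitivity estimate, which is more than the paper provides; you have already noted this yourself in the final sentence.
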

Lemma~\ref{lemma:perturbation_bound} analyzes the sensitivity of the Lyapunov equation for the canonical form of LQR. 
For discounted LQR, the sensitivity analysis of the Lyapunov equation is presented in the following lemma.
\begin{lemma}
Let $l= \left\| H^{-1}\right\|^{-1}$, $H = I \otimes I - \gamma A_K^{\rm{T}} \otimes A_K^{\rm{T}}$, $\epsilon =  \gamma \left\| A_K \right\|_F \left\| \Delta A_K \right\|_F + \frac{\gamma}{2} \left\| \Delta A_K \right\|_F^2$. 
 If $l>2\epsilon$,  we have 
\begin{align}\label{eq:perturb:bound:P}
    \left\| P - \tilde{P}\right\| \leq \left\| P - \tilde{P}\right\|_F \leq \frac{2  \left\| P \right\|_F \epsilon}{l - 2 \epsilon}. 
\end{align}
\end{lemma}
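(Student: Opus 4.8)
The plan is to reduce the two discounted Lyapunov equations for $P$ and $\tilde{P}$ to the standard (undiscounted) form already treated in Lemma~\ref{lemma:perturbation_bound}, via the rescaling $\hat{A}_K := \sqrt{\gamma}\, A_K$ and $\hat{\tilde{A}}_K := \sqrt{\gamma}\, \tilde{A}_K$. Writing $\bar{Q} := Q + K^{\rm{T}} R K$, the identity $\gamma A_K^{\rm{T}} P A_K = (\sqrt{\gamma} A_K)^{\rm{T}} P (\sqrt{\gamma} A_K) = \hat{A}_K^{\rm{T}} P \hat{A}_K$ turns the defining equation $P = \bar{Q} + \gamma A_K^{\rm{T}} P A_K$ into the ordinary Lyapunov equation $P = \bar{Q} + \hat{A}_K^{\rm{T}} P \hat{A}_K$, and likewise $\tilde{P} = \bar{Q} + \hat{\tilde{A}}_K^{\rm{T}} \tilde{P} \hat{\tilde{A}}_K$. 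Since $\gamma \in (0,1)$ and $A_K$, $\tilde{A}_K$ are stable, their eigenvalues are scaled by $\sqrt{\gamma} < 1$, so $\hat{A}_K$ and $\hat{\tilde{A}}_K$ remain stable and each equation has a unique solution; the induced perturbation of the rescaled matrix is $\Delta \hat{A}_K = \hat{\tilde{A}}_K - \hat{A}_K = \sqrt{\gamma}\, \Delta A_K$.

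I would then apply Lemma~\ref{lemma:perturbation_bound} with $A = \hat{A}_K$, $X = P$, $\tilde{X} = \tilde{P}$, and $\Delta A = \sqrt{\gamma}\, \Delta A_K$, after checking that the lemma's constants coincide with those in the statement. For the Kronecker term, $\hat{A}_K^{\rm{T}} \otimes \hat{A}_K^{\rm{T}} = \gamma\, (A_K^{\rm{T}} \otimes A_K^{\rm{T}})$, so that $H_0 = I \otimes I - \hat{A}_K^{\rm{T}} \otimes \hat{A}_K^{\rm{T}} = H$ and hence $l_0 = \|H_0^{-1}\|^{-1} = l$. For the perturbation magnitude, $\|\hat{A}_K\|_F = \sqrt{\gamma}\|A_K\|_F$ and $\|\Delta \hat{A}_K\|_F = \sqrt{\gamma}\|\Delta A_K\|_F$ give $\epsilon_0 = \gamma \|A_K\|_F \|\Delta A_K\|_F + \tfrac{\gamma}{2}\|\Delta A_K\|_F^2 = \epsilon$. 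Thus the lemma's hypothesis $l_0 > 2\epsilon_0$ is exactly the assumed condition $l > 2\epsilon$.

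With these identifications in place, Lemma~\ref{lemma:perturbation_bound} immediately yields $\|P - \tilde{P}\|_F \le 2\|P\|_F \epsilon / (l - 2\epsilon)$, and the elementary inequality $\|P - \tilde{P}\| \le \|P - \tilde{P}\|_F$ between the spectral and Frobenius norms completes the chain in \eqref{eq:perturb:bound:P}. I do not anticipate a genuine obstacle here: the only nonroutine step is recognizing the $\sqrt{\gamma}$ rescaling that absorbs the discount factor into the system matrix, after which the argument collapses to a mechanical verification that $H_0 = H$ and $\epsilon_0 = \epsilon$, followed by the cited sensitivity bound and a standard norm comparison.
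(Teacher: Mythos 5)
Your proposal is correct and follows essentially the same route as the paper: the paper's own proof simply states that the bound ``directly follows'' from Lemma~\ref{lemma:perturbation_bound} together with the norm inequality $\left\|M\right\| \leq \left\|M\right\|_F$, and your $\sqrt{\gamma}$-rescaling argument (with the verifications $H_0 = H$, $\epsilon_0 = \epsilon$, and stability of $\sqrt{\gamma}A_K$) is precisely the implicit reduction that justifies that invocation. If anything, your write-up is more complete than the paper's one-line proof, since it makes explicit why the undiscounted sensitivity result applies to the discounted Lyapunov equation.
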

\begin{proof}
It directly follows from Lemma~\ref{lemma:perturbation_bound} and 
 $\left\|M\right\|_2 \leq \left\|M\right\|_F \leq \sqrt{n}\left\|M\right\|_2$, for any matrix $M\in \mathbb{R}^{n \times n}$. 
\end{proof}

Back to the sensitivity of perturbations on the return distribution, we define $\tilde{Y}:= G^K(x) - \tilde{G}^{K}(x)$, we have
\begin{align}\label{eq:pf:pert:1}
     &\sup_{z}|{F}^{K}_x(z)-\tilde{F}^{K}_{x}(z) | \nonumber \\
     &=\sup_{z}|\mathbb{P}(\tilde{G}^{K}(x) \leq z) -\mathbb{P}(G^K(x)\leq z) | \nonumber \\
     &= \sup_{z}|\mathbb{P}( \tilde{G}^{K}(x) \leq z) -\mathbb{P}( \tilde{G}^{K}(x) +\tilde{Y}\leq z) | \nonumber \\
     &= \sup_{z}\Big|\mathbb{P}(\tilde{G}^{K}(x)\leq z) \int_{-\infty}^{\infty} \mathbb{P}(\tilde{Y} = t)dt \nonumber \\
     &\quad -\int_{-\infty}^{\infty} \mathbb{P}( \tilde{G}^{K}(x) \leq z-t) \mathbb{P}(\tilde{Y} = t) dt \Big| \nonumber \\
     &= \sup_{z}\Big| \int_{-\infty}^{\infty} \mathbb{P}(\tilde{Y} = t) \big(  \tilde{F}^K_{x}(z) - \tilde{F}^K_{x}(z-t) \big) dt \Big| \nonumber \\
     &\leq \sup_{z}\Big| \int_{-\infty}^{\infty} \mathbb{P}(\tilde{Y} = t) \tilde{f}_{\max} |t| dt \Big| \nonumber \\
     & = \tilde{f}_{\max} \mathbb{E}\big[ |\tilde{Y}|\big],
\end{align}
where $\tilde{f}_{\max}$ is an upper bound of the PDF of $\tilde{G}^{K}(x)$ and the last inequality follows from the mean value theorem.

From the definition of $\tilde{Y}$, we have 
\begin{align}\label{eq:pf:pert:2}
    &\mathbb{E}|\tilde{Y}| = \mathbb{E}\Big[ \Big|x^{\rm{T}} (P - \tilde{P}) x +  \sum_{k = 0}^{\infty}  \gamma^{k+1}  w_k^{\rm{T}} (P-\tilde{P}) w_k \nonumber \\
    &\quad +  2 \sum_{k = 0 }^{\infty} \gamma^{k+1} w_k^{\rm{T}} ( P A_K^{k+1} - \tilde{P} \tilde{A}_K^{k+1} )x \nonumber \\
    &\quad  + 2  \sum_{k = 1 }^{\infty} \gamma^{k+1} w_k^{\rm{T}} \big(P \sum_{\tau=0}^{k-1} {A}_K^{k-\tau}w_{\tau}-  \tilde{P} \sum_{\tau=0}^{k-1} \tilde{A}_K^{k-\tau}w_{\tau}\big) \Big|\Big] \nonumber \\
    &\leq |x^{\rm{T}} ( \tilde{P} - P ) x| + \sum_{k = 0}^{\infty}  \gamma^{k+1}\mathbb{E} \Big[ \Big|   w_k^{\rm{T}} (\tilde{P}- P) w_k\Big| \Big]\nonumber \\
    &\quad +  2 \sum_{k = 0 }^{\infty} \gamma^{k+1} \mathbb{E}\Big[ \Big| w_k^{\rm{T}} (  \tilde{P} \tilde{A}_K^{k+1} -P A_K^{k+1} )x \Big| \Big]\nonumber \\
    &\quad  + 2  \sum_{k = 1 }^{\infty} \gamma^{k+1} \mathbb{E}\Big[ \Big| w_k^{\rm{T}} \big( \tilde{P} \sum_{\tau=0}^{k-1} \tilde{A}_K^{k-\tau}- P \sum_{\tau=0}^{k-1} {A}_K^{k-\tau}   \big)w_{\tau} \Big| \Big].
\end{align}
We handle the terms in the above inequality one by one.
By virtue of the Holder's inequality, the first term can be bounded by 
\begin{align}\label{eq:pf:pert:3}
    |x^{\rm{T}} ( \tilde{P} - P ) x| \leq \left\|x\right\|^2 \left\| \tilde{P} - P\right\|.
\end{align}
Similarly, the second term can be bounded by
\begin{align}\label{eq:pf:pert:4}
    &\sum_{k = 0}^{\infty}  \gamma^{k+1}\mathbb{E} \Big[\Big|   w_k^{\rm{T}} (\tilde{P}- P) w_k\Big|\Big] \nonumber \\
    &\leq \sum_{k = 0}^{\infty}  \gamma^{k+1}  \sigma^2 \left\| \tilde{P} - P\right\| 
    \leq  \frac{\sigma^2 \gamma }{1-\gamma} \left\| \tilde{P} - P\right\|.
\end{align}
For the third term, we have 
\begin{align}\label{eq:pf:pert:5}
    &2 \sum_{k = 0 }^{\infty} \gamma^{k+1} \mathbb{E}\Big[\Big| w_k^{\rm{T}} (  \tilde{P} \tilde{A}_K^{k+1} -P A_K^{k+1} )x \Big| \Big]\nonumber \\
    &= 2 \sum_{k = 0 }^{\infty} \gamma^{k+1} \mathbb{E}\Big[\Big| w_k^{\rm{T}} \big(  \tilde{P} -P \big)\tilde{A}_K^{k+1} x \nonumber \\
    &\quad + w_k^{\rm{T}} P  \big(\tilde{A}_K^{k+1} - A_K^{k+1} \big)x \Big|\Big] \nonumber\\
    &\leq 2 \sum_{k = 0 }^{\infty} \gamma^{k+1} \mathbb{E} \Big[ \left\| w_k \right\| \left\| \tilde{P} -P \right\| \left\|\tilde{A}_K^{k+1}  \right\| \left\| x\right\| \Big]\nonumber \\
    &\quad + 2 \sum_{k = 0 }^{\infty} \gamma^{k+1} \mathbb{E} \Big[\left\| w_k \right\| \left\| P\right\|  \left\| \tilde{A}_K^{k+1}-A_K^{k+1}\right\|  \left\| x\right\|\Big]\nonumber \\
    &\leq 2 \sigma \left\| \tilde{P} -P\right\| \left\|x\right\| \sum_{k = 0 }^{\infty} \gamma^{k+1} \left\| \tilde{A}_K^{k+1}\right\|  \nonumber \\
    & \quad + 2 \sigma \left\| P \right\| \left\| x\right\| \sum_{k = 0 }^{\infty} \gamma^{k+1} \left\|\tilde{A}_K^{k+1} -  A_K^{k+1} \right\|.
\end{align}
By decomposing the term $\tilde{A}_K^{k+1} -  A_K^{k+1}= (\tilde{A}_K - A_K) ( \sum_{i=0}^k \tilde{A}_K^{k-i} A_K^{i} )$, we have 
\begin{align}\label{eq:pf:pert:6}
    &\left\| \tilde{A}_K^{k+1} -  A_K^{k+1}\right\| \nonumber \\
    &\leq    \left\|\tilde{A}_K - A_K \right\|  \left\|\sum_{i=0}^k \tilde{A}_K^{k-i} A_K^{i} \right\| \nonumber \\
    &\leq  \left\|\tilde{A}_K - A_K \right\| ( \sum_{i=0}^k \left\| \tilde{A}_K^{k-i} A_K^{i} \right\|) \nonumber \\
    &\leq \left\|\Delta A_K \right\| (\sum_{i=0}^k  \left\| \tilde{A}_K^{k-i} \right\| \left\|A_K^{i} \right\|  ) \nonumber \\
    &\leq  \left\|\Delta A_K \right\| (k+1) \rho_K^k \leq  \left\|\Delta A_K \right\| U.
\end{align}
Define the function  $ f(k)=(k+1)\rho_K^k$, and the constant $U = \max\{1, \frac{1}{\rho_0} \rho_K^{(1-\rho_0)/\rho_0} \}$, $\rho_0 = \ln(1/\rho_K)$.
It is easy to verify that the function $ f(k)$ obtains the maximum at $k=0$ if $\rho_0\geq1$ and at $k=\frac{1-\rho_0}{\rho_0}$ if $\rho_0<1$. Therefore, the last inequality follows since $f(k)\leq  \max\{1, \frac{1}{\rho_0} \rho_K^{(1-\rho_0)/\rho_0} \} = U$ for all $k\geq0$.
Substituting \eqref{eq:pf:pert:6} into \eqref{eq:pf:pert:5}, we have
\begin{align}\label{eq:pf:pert:7}
    &2 \sum_{k = 0 }^{\infty} \gamma^{k+1} \mathbb{E}\Big[\Big| w_k^{\rm{T}} (  \tilde{P} \tilde{A}_K^{k+1} -P A_K^{k+1} )x \Big| \Big]\nonumber \\
    &\leq  \frac{2 \sigma \left\| P \right\| \left\| x\right\| U\gamma}{1-\gamma}   \left\|\Delta A_K \right\| \nonumber \\
    & \quad + 2 \sigma \left\| \tilde{P} -P\right\| \left\| x\right\| \sum_{k = 0 }^{\infty} \gamma^{k+1} \left\| \tilde{A}_K^{k+1}\right\| \nonumber \\
    & \leq  \frac{2 \sigma \left\| P \right\| \left\| x\right\| U\gamma}{1-\gamma}   \left\|\Delta A_K \right\| +  2 \sigma \left\| \tilde{P} -P\right\| \left\| x\right\| \sum_{k = 0 }^{\infty} (\gamma \rho_K)^{k+1} \nonumber \\
    &\leq \frac{2 \sigma \left\| P \right\| \left\| x\right\| U\gamma}{1-\gamma}   \left\|\Delta A_K \right\| + \frac{2 \sigma \gamma \rho_K} {1-\gamma \rho_K}  \left\| \tilde{P} -P\right\|,
\end{align}
where the second inequality follows since $\left\| \tilde{A}_K^{k+1} \right\| \leq\left\| \tilde{A}_K \right\|^{k+1} \leq \rho_K^{k+1} $.
For the fourth term, we have
\begin{align}\label{eq:pf:pert:8}
    &2  \sum_{k = 1 }^{\infty} \gamma^{k+1} \mathbb{E}\Big[\Big| w_k^{\rm{T}} \big( \tilde{P} \sum_{\tau=0}^{k-1} \tilde{A}_K^{k-\tau}w_{\tau} - P \sum_{\tau=0}^{k-1} {A}_K^{k-\tau}w_{\tau}   \big) \Big| \Big]\nonumber \\
    & = 2  \sum_{k = 1 }^{\infty} \gamma^{k+1} \mathbb{E}\Big[\Big| w_k^{\rm{T}} \Big( \tilde{P} \sum_{\tau=0}^{k-1} \tilde{A}_K^{k-\tau}w_{\tau} - P \sum_{\tau=0}^{k-1} \tilde{A}_K^{k-\tau}w_{\tau}     \nonumber \\
    &\quad +  P \sum_{\tau=0}^{k-1} \tilde{A}_K^{k-\tau}w_{\tau} - P \sum_{\tau=0}^{k-1} {A}_K^{k-\tau}w_{\tau}   \Big) \Big| \Big]\nonumber \\
    &\leq 2  \sum_{k = 1 }^{\infty} \gamma^{k+1} \mathbb{E}\Big[\Big|  w_k^{\rm{T}} \big( \tilde{P}-P \big) \sum_{\tau=0}^{k-1} \tilde{A}_K^{k-\tau}w_{\tau} \Big| \Big]\nonumber \\
    &\quad + 2 \sum_{k = 1 }^{\infty} \gamma^{k+1} \mathbb{E}\Big[\Big| w_k^{\rm{T}}  P \sum_{\tau=0}^{k-1} \big(\tilde{A}_K^{k-\tau} - {A}_K^{k-\tau} \big)w_{\tau}  \Big| \Big] \nonumber \\
    &\leq 2  \sum_{k = 1 }^{\infty} \gamma^{k+1} \mathbb{E} \Big[  \left\|w_k \right\|  \left\|\tilde{P}-P \right\| \left\| \sum_{\tau=0}^{k-1} \tilde{A}_K^{k-\tau}w_{\tau} \right\|  \Big]   \nonumber \\
    &\quad + 2 \sum_{k = 1 }^{\infty} \gamma^{k+1} \mathbb{E}\Big[ \left\|w_k \right\| \left\|P \right\|  \left\|  \sum_{\tau=0}^{k-1} \big(\tilde{A}_K^{k-\tau} - {A}_K^{k-\tau} \big)w_{\tau}  \right\| \Big]\nonumber \\
    &\leq 2 \sum_{k = 1 }^{\infty} \gamma^{k+1} \mathbb{E}\Big[ \left\|w_k \right\| \left\|{P} \right\|  \sum_{\tau=0}^{k-1} \left\| \tilde{A}_K^{k-\tau} - {A}_K^{k-\tau} \right\|   \left\|w_{\tau}  \right\| \Big] \nonumber \\
    &\quad + 2  \sum_{k = 1 }^{\infty} \gamma^{k+1} \mathbb{E} \Big[  \left\|w_k \right\|  \left\|\tilde{P}-P \right\|  \sum_{\tau=0}^{k-1} \left\|\tilde{A}_K^{k-\tau}\right\| \left\|w_{\tau} \right\| \Big] \nonumber \\
    &\leq 2 \sigma^2 \left\|{P} \right\| \sum_{k = 1 }^{\infty} \gamma^{k+1} \sum_{\tau=0}^{k-1} \left\| \tilde{A}_K^{k-\tau} - {A}_K^{k-\tau} \right\| \nonumber \\
    &\quad + 2  \sigma^2 \left\|\tilde{P}-P \right\|  \sum_{k = 1 }^{\infty} \gamma^{k+1} \sum_{\tau=0}^{k-1} \left\|\tilde{A}_K^{k-\tau}\right\|,
\end{align}
where the last inequality follows since $w_k$ and $w_{\tau}$, $\tau=0,1,\ldots,k-1$ are independent. 
By decomposing the term $\tilde{A}_K^{k-\tau} - {A}_K^{k-\tau}$, we have
\begin{align}\label{eq:pf:pert:9}
    &\sum_{\tau=0}^{k-1} \left\| \tilde{A}_K^{k-\tau} - {A}_K^{k-\tau} \right\| \nonumber \\
    &=\sum_{\tau=0}^{k-1} \left\| (\tilde{A}_K -{A}_K )  \sum_{i=0}^{k-\tau-1} \tilde{A}_K^{k-\tau-1-i} A_K^{i}  \right\| \nonumber\\
    &\leq  \sum_{\tau=0}^{k-1} \left\|  \tilde{A}_K -{A}_K\right\| \left\| \sum_{i=0}^{k-\tau-1} \tilde{A}_K^{k-\tau-1-i} A_K^{i}  \right\| \nonumber \\
    &\leq  \left\|  \tilde{A}_K -{A}_K\right\| \sum_{\tau=0}^{k-1} \sum_{i=0}^{k-\tau-1} \rho_K^{k-\tau -1} \nonumber \\
    & = \left\|  \Delta{A}_K\right\| \sum_{\tau=0}^{k-1}  (k-\tau)\rho_K^{k-\tau -1} \nonumber \\
    & = \left\|  \Delta{A}_K\right\| \sum_{i=1}^k i \rho_K^{i-1} \nonumber \\
    &\leq  \frac{\left\|  \Delta{A}_K\right\|}{(1-\rho_K)^2},
\end{align}
where the last inequality follows since $S_k :=\sum_{i=1}^k i \rho_K^{i-1} = \frac{1-\rho_K^k}{(1-\rho_K)^2} - \frac{k\rho_K^k}{1-\rho_K} \leq \frac{1}{(1-\rho_K)^2}$.
Substituting \eqref{eq:pf:pert:9} into \eqref{eq:pf:pert:8}, we have
\begin{align}\label{eq:pf:pert:10}
    &2  \sum_{k = 1 }^{\infty} \gamma^{k+1} \mathbb{E}\Big[\Big| w_k^{\rm{T}} \big( \tilde{P} \sum_{\tau=0}^{k-1} \tilde{A}_K^{k-\tau}w_{\tau} - P \sum_{\tau=0}^{k-1} {A}_K^{k-\tau}w_{\tau}   \big) \Big| \Big]\nonumber \\
    & \leq 2 \sigma^2 \left\|{P} \right\| \sum_{k = 1 }^{\infty} \gamma^{k+1} \frac{\left\|  \Delta{A}_K\right\|}{(1-\rho_K)^2} \nonumber \\
    &\quad + 2  \sigma^2 \left\|\tilde{P}-P \right\|  \sum_{k = 1 }^{\infty} \gamma^{k+1} \sum_{\tau=0}^{k-1} \left\|\tilde{A}_K^{k-\tau}\right\| \nonumber \\
    &\leq \frac{2 \sigma^2 \left\|{P} \right\| \gamma^2 }{(1-\gamma)(1-\rho_K)^2} \left\|\Delta A_K \right\| \nonumber \\
    &\quad + 2  \sigma^2 \left\|\tilde{P}-P \right\|  \sum_{k = 1 }^{\infty} \gamma^{k+1}\sum_{\tau=0}^{k-1} \rho^{k-\tau} \nonumber \\
    &\leq \frac{2 \sigma^2 \left\|{P} \right\| \gamma^2 \left\|\Delta A_K \right\|}{(1-\gamma)(1-\rho_K)^2}  + \frac{2  \sigma^2 \left\|\tilde{P}-P \right\|}{(1-\gamma)(1-\rho_K)}. 
\end{align}
Combining \eqref{eq:pf:pert:3}, \eqref{eq:pf:pert:4}, \eqref{eq:pf:pert:7}, \eqref{eq:pf:pert:10} and \eqref{eq:pf:pert:2}, we have
\begin{align}\label{eq:pf:pert:11}
    &\mathbb{E}\big[|\tilde{Y}|\big] \leq \big(\left\|x\right\|^2  + \frac{\sigma^2 \gamma }{1-\gamma} \big)\left\|  \tilde{P} - P\right\| \nonumber \\
    & \quad + \frac{2 \sigma \left\| {P} \right\| \left\| x\right\| U\gamma}{1-\gamma}   \left\|\Delta A_K \right\| + \frac{2 \sigma \gamma \rho_K} {1-\gamma \rho_K}  \left\| \tilde{P} -P\right\| \nonumber \\
    & \quad + \frac{2 \sigma^2 \left\|{P} \right\| \gamma^2 \left\|\Delta A_K \right\|}{(1-\gamma)(1-\rho_K)^2}  + \frac{2  \sigma^2 \left\|\tilde{P}-P \right\|}{(1-\gamma)(1-\rho_K)} \nonumber \\
      &:= \tilde{c}_3 \left\|\tilde{P}-P \right\| + \tilde{c}_4 \left\|\Delta A_K \right\|,
\end{align}
where 
\begin{align*}
    &\tilde{c}_3=\left\|x\right\|^2  + \frac{\sigma^2 \gamma }{1-\gamma} +\frac{2 \sigma \gamma \rho_K} {1-\gamma \rho_K} + \frac{2  \sigma^2}{(1-\gamma)(1-\rho_K)}, \\
&\tilde{c}_4=\frac{2 \sigma \left\| {P} \right\| \left\| x\right\| U\gamma}{1-\gamma}  + \frac{2 \sigma^2 \left\|{P} \right\| \gamma^2}{(1-\gamma)(1-\rho_K)^2}.
\end{align*}

Substituting  \eqref{eq:pf:pert:11} into \eqref{eq:pf:pert:1} and using \eqref{eq:perturb:bound:P}, we have 
\begin{align*}
    &\sup_{z}|{F}^{K}_x(z)-\tilde{F}^{K}_{x}(z) | \nonumber \\
    &\leq \tilde{f}_{\max}(\tilde{c}_3 \left\|\tilde{P}-P \right\| + \tilde{c}_4 \left\|\Delta A_K \right\|) \nonumber \\
    &\leq \tilde{f}_{\max} \tilde{c}_3  \frac{2 \left\| P \right\|_F }{l - 2 \epsilon} \big( \gamma \left\| A_K \right\|_F \left\| \Delta A_K \right\|_F + \frac{\gamma}{2} \left\| \Delta A_K \right\|_F^2\big) \nonumber \\
    &\quad + \tilde{f}_{\max} \tilde{c}_4  \left\|\Delta A_K \right\| \nonumber \\
    & \leq \Big(\frac{2 \tilde{f}_{\max} \tilde{c}_3 \sqrt{n}  \gamma \left\|A_K\right\|_F \left\| P \right\|_F } {l - 2 \epsilon} +\tilde{f}_{\max} \tilde{c}_4 \Big) \left\|\Delta A_K \right\| \nonumber \\
    &\quad + \frac{\tilde{f}_{\max} \tilde{c}_3 n \gamma \left\| P \right\|_F }{l - 2 \epsilon} \left\| \Delta A_K \right\|^2 \nonumber \\
    & := \tilde{c}_1 \left\|\Delta A_K \right\| + \tilde{c}_2 \left\| \Delta A_K \right\|^2,
\end{align*}
where the last inequality follows from $\left\|M\right\|_F \leq \sqrt{n}\left\|M\right\|_2$ for any matrix $M \in \mathbb{R}^{n\times n}$.
The proof is complete and also yields the expression of the constants $\tilde{c}_1,\tilde{c}_2$.

\subsection{Proof of Theorem \ref{The:ranretromodelapprox}}\label{app:thm:modelappro}
Define $Y_N:= G^K(x) - G^K_N(x)$. Then, we have
\begin{align}\label{eq:upp_bound_temp1}
     &\sup_{z}|{F}^{K}_x(z)-{F}^{K}_{x,N}(z) | \nonumber \\
     &=\sup_{z}|\mathbb{P}(G^K_N(x) \leq z) -\mathbb{P}(G^K(x)\leq z) | \nonumber \\
     &= \sup_{z}|\mathbb{P}(G^K_N(x)\leq z) -\mathbb{P}(G^K_N(x) +Y_N\leq z) | \nonumber \\
     &= \sup_{z}\Big|\mathbb{P}(G^K_N(x)\leq z) \int_{-\infty}^{\infty} \mathbb{P}(Y_N = t)dt \nonumber \\
     &\quad -\int_{-\infty}^{\infty} \mathbb{P}(G^K_N(x) \leq z-t) \mathbb{P}(Y_N = t) dt \Big| \nonumber \\
     &= \sup_{z}\Big| \int_{-\infty}^{\infty} \mathbb{P}(Y_N = t) \big(  F^K_{x,N}(z) -F^K_{x,N}(z-t) \big) dt \Big|.
\end{align}
Since the random variables $w_t$ are i.i.d for all $t>0$ and the PDF of $w_t$ exists, the function $F^K_{x,N}$ is continuous and differentiable. 
Applying the mean value theorem, when $t>0$ there exists a point $z'\in[z-t,z]$ such that $F^K_{x,N}(z) -F^K_{x,N}(z-t) = f^K_{x,N}(z') t$, where $f^K_{x,N}$ is the PDF of $G^K_N(x)$. Since the PDF of $w_t$ is bounded, it further follows  that $f^K_{x,N}$ is bounded.   Then, we have that $|F^K_{x,N}(z) -F^K_{x,N}(z-t)| = |f^K_{x,N}(z') t| \leq f_{\max} |t|$, where $f_{\max}$ is an upper bound of the probability function $f^K_{x,N}$. Following a similar argument, we can show that this inequality holds when $t\leq 0$. Substituting this inequality into \eqref{eq:upp_bound_temp1}, we obtain
\begin{align}\label{eq:upp_bound_temp2}
   &\sup_{z}|{F}^{K}_x(z)-{F}^{K}_{x,N}(z) | \nonumber \\
   \leq &\sup_{z}\Big| \int_{-\infty}^{\infty} \mathbb{P}(Y_N = t) f_{\max} |t| dt \Big| 
     = f_{\max} \mathbb{E}|Y_N|.
\end{align}
From the definition of $Y_N$, we obtain that
\begin{align*}
     Y_N =& 
     \sum_{k = N }^{\infty}  \gamma^{k+1}  w_k^{\rm{T}} P w_k +  2 \sum_{k = N }^{\infty} \gamma^{k+1} w_k^{\rm{T}} P A_K^{k+1}x \\
     &+ 2 \sum_{k = N }^{\infty} \gamma^{k+1} w_k^{\rm{T}} P \sum_{\tau=0}^{k-1} A_K^{k-\tau}w_{\tau} \nonumber \\
     \mathop{=}^{t=k-N}&    \gamma^N \Big(  \sum_{t = 0 }^{\infty} \gamma^{t+1}  w_{t+N}^{\rm{T}} P w_{t+N} \nonumber \\
     &+  2 \sum_{t = 0 }^{\infty} \gamma^{t+1} w_{t+N}^{\rm{T}} P A_K^{t+N+1}x \nonumber \\
     &+ 2 \sum_{t = 0 }^{\infty} \gamma^{t+1} w_{t+N}^{\rm{T}} P \sum_{\tau=0}^{t+N-1} A_K^{t+N-\tau}w_{\tau}  \Big).
\end{align*}
Taking the expectation of the absolute value of $Y_N$, we have
\begin{align*}
    \mathbb{E}\big[|Y_N|\big] \leq &\gamma^N \Big( \sum_{t = 0 }^{\infty} \gamma^{t+1} \mathbb{E} \big[| w_{t+N}^{\rm{T}} P w_{t+N} | \big] \\
    &+  2 \sum_{t = 0 }^{\infty} \gamma^{t+1} \mathbb{E}\big[ | w_{t+N}^{\rm{T}} P A_K^{t+N+1}x|\big]\nonumber \\
    &+ 2 \sum_{t = 0 }^{\infty} \gamma^{t+1} \mathbb{E} \big[|w_{t+N}^{\rm{T}} P \sum_{\tau=0}^{t+N-1} A_K^{t+N-\tau}w_{\tau} | \big] \Big) .
\end{align*}
We handle the terms in the above inequality one by one. For the first term, we have that
\begin{align}\label{eq:upp_bound_temp4}
    &\sum_{t = 0 }^{\infty} \gamma^{t+1} \mathbb{E} \big[| w_{t+N}^{\rm{T}} P w_{t+N} | \big]\nonumber \\
    &\leq \sum_{t = 0 }^{\infty} \gamma^{t+1}  \mathbb{E}\big[| \lambda_{\max}(P) w_{t+N}^{\rm{T}} w_{t+N} | \big]\nonumber \\
    &\leq \lambda_{\max}(P) \sigma^2 \frac{\gamma}{1-\gamma}.
\end{align}
By virtue of Jensen's Inequality, it gives $\mathbb{E}^2 [\left\| w_k \right\|] \leq \mathbb{E}[\left\| w_k \right\|^2 ] \leq \sigma^2 $ . Then, for the second term, we have
\begin{align}\label{eq:upp_bound_temp5}
    &2 \sum_{t = 0 }^{\infty} \gamma^{t+1} \mathbb{E}\big[| w_{t+N}^{\rm{T}} P A_K^{t+N+1}x| \big]\nonumber \\
    \leq & 2\sigma  \sum_{t = 0 }^{\infty} \gamma^{t+1} \left\| P \right\| \left\| A_K^{t+N+1} \right\| \left\|x \right\| \nonumber \\
    \leq& 2\sigma  \sum_{t = 0 }^{\infty} \gamma^{t+1} \left\| P\right\| \rho_K^{t+N-1} \left\|x \right\| \nonumber \\
    \leq & 
    2\sigma \left\| P\right\| |x|\frac{\gamma \rho_K^{N-1}}{1-\gamma \rho_K} \leq 2\sigma \left\| P\right\| \left\|x\right\| \frac{\gamma}{1-\gamma \rho_K},
\end{align}
where the second inequality is due to the fact that $\left\|A_K^{t+N+1} \right\|\leq (\left\|A_K \right\|)^{t+N+1} \leq \rho_K^{t+N+1}$ and the last inequality follows from the fact that $N\geq 1$.
For the third term, we have that
\begin{align}\label{eq:upp_bound_temp8}
    &2 \sum_{t = 0 }^{\infty} \gamma^{t+1} \mathbb{E} \big[|w_{t+N}^{\rm{T}} P \sum_{\tau=0}^{t+N-1} A_K^{t+N-\tau}w_{\tau} | \big]\nonumber \\
    \leq  & 2 \sum_{t = 0 }^{\infty} \gamma^{t+1} \mathbb{E} \left[ \left\| w_{t+N}^{\rm{T}}\right\|  \left\| P \right\|  \left\|\sum_{\tau=0}^{t+N-1} A_K^{t+N-\tau} w_{\tau}\right\| \right] \nonumber \\
    \leq &  2 \sigma \left\| P \right\|  \sum_{t = 0 }^{\infty} \gamma^{t+1}   \mathbb{E} \left[ \left\|\sum_{\tau=0}^{t+N-1} A_K^{t+N-\tau} w_{\tau}\right\| \right] \nonumber \\
    \leq  & 2 \sigma \left\| P \right\|  \sum_{t = 0 }^{\infty} \gamma^{t+1}   \mathbb{E} \left[ \sum_{\tau=0}^{t+N-1} \left\|A_K^{t+N-\tau} \right\|\left\|w_{\tau}\right\| \right] \nonumber\\
    \leq  &2 \sigma^2 \left\| P \right\| \sum_{t = 0 }^{\infty} \gamma^{t+1}  \sum_{\tau=0}^{t+N-1}  \rho_K^{t+N-\tau} \nonumber \\
    \leq & 2 \sigma^2 \left\| P \right\| \sum_{t = 0 }^{\infty} \gamma^{t+1} \frac{\rho_K}{1-\rho_K} \nonumber \\
    \leq & 2 \sigma^2 \left\| P \right\| \frac{\gamma \rho_K}{(1-\gamma)(1-\rho_K)},
\end{align}
where the second inequality is due to the fact that $w_{\tau}$ and $w_{t+N}$ are independent and the  second to last inequality follows from the fact that $\sum_{\tau=0}^{t+N-1}  \rho_K^{t+N-\tau} = \sum_{\tau=1}^{t+N} \rho_K^{\tau} \leq \frac{\rho_K}{1-\rho_K} $.
Combining \eqref{eq:upp_bound_temp4}, \eqref{eq:upp_bound_temp5} and \eqref{eq:upp_bound_temp8}, we have that
\begin{align*}
    &\sup_{z}|{F}^{K}_x(z)-{F}^{K}_{x,N}(z) | \leq  f_{\max} \mathbb{E}\big[|Y_N|\big] \nonumber \\
    \leq & f_{\max} \gamma^N \Big(\lambda_{\max}(P) \sigma^2 \frac{\gamma}{1-\gamma} + 2\sigma \left\| P\right\| \left\|x\right\| \frac{\gamma}{1-\gamma \rho_K} \nonumber \\
    &+  2 \sigma^2 \left\| P \right\| \frac{\gamma \rho_K}{(1-\gamma)(1-\rho_K)} \Big) 
    :=  c_0 \gamma^N . \nonumber 
\end{align*}
The proof is complete and also yields the expression of the constant $c_0$.

\subsection{Proof of Theorem \ref{The:ranretromodelfreeapprox}}\label{app:thm:modelfreeappro}

It follows that 
\begin{align}\label{eq:thm:r1}
    &\sup_z | \hat{F}^{K,T}_{x,M}(z) - F^K_x(z)| \nonumber \\
    \leq & \sup_z | \hat{F}^{K,T}_{x,M}(z) - F^{K,T}_x(z)| + \sup_z| F^{K,T}_x(z) - F^K_x(z)|. 
\end{align}
Note that $\hat{F}^{K,T}_{x,M}$ and  $F^{K,T}_x$ are the EDF and CDF of the random variable $G^{K,T}(x)$, respectively. By virtue of the Dvoretzky–Kiefer–Wolfowitz inequality, we have
\begin{align}\label{eq:thm:r2}
    \sup_z | \hat{F}^{K,T}_M(z) - F^{K,T}(z)| \leq \sqrt{\frac{\ln(1 / \delta)}{2M}},
\end{align}
with probability at least $1-\delta$.
Define the random variable $Z_T = G^K(x) - G^{K,T}(x) = \sum_{T+1}^{\infty} \gamma^t x_t^{\rm{T}} (Q+K^{\rm{T}}RK)x_t$
Further, we have
\begin{align}\label{eq:thm:r3}
    &\sup_z| F^{K,T}_x(z) - F^K_x(z)| \nonumber \\
    &=\sup_z\Big| \mathbb{P}\{G^{K,T}(x)\leq z \} -\mathbb{P}\{G^{K}(x)\leq z \}\Big| \nonumber \\
    & = \sup_z\Big| \mathbb{P}\{G^{K,T}(x)\leq z \} -\mathbb{P}\{G^{K,T}(x) +Z_T\leq z \}\Big| \nonumber \\
    &= \sup_{z}\Big|\mathbb{P}(G^{K,T}(x)\leq z) \int_{-\infty}^{\infty} \mathbb{P}(Z_T = t)dt \nonumber \\
     &\quad -\int_{-\infty}^{\infty} \mathbb{P}(G^{K,T}(x) \leq z-t) \mathbb{P}(Z_T = t) dt \Big| \nonumber \\
     &= \sup_{z}\Big| \int_{-\infty}^{\infty} \mathbb{P}(Z_T = t) \big(  F^{K,T}_{x}(z) -F^{K,T}_{x}(z-t) \big) dt \Big| \nonumber \\
     &\leq \sup_{z}\Big| \int_{-\infty}^{\infty} \mathbb{P}(Z_T = t) f_{\max} |t| dt \Big| \nonumber \\
     &= f_{\max} \mathbb{E}\big[|Z_T|\big].
\end{align}
Now we focus on $\mathbb{E}|Z_T|$. From its definition, it gives
\begin{align}\label{eq:thm:t0}
    &\mathbb{E}\big[|Z_T|\big]= \mathbb{E}\Big[ \sum_{t=T+1}^{\infty} \gamma^t x_t^{\rm{T}} (Q+K^{\rm{T}} R K) x_t \Big] \nonumber \\
    & \leq \mathbb{E}\Big[ \sum_{t=T+1}^{\infty} \gamma^t \left\|Q + K^{\rm{T}} R K \right\| \left\| x_t\right\|^2\Big] .
\end{align}
From $x_{t+1} = A_K x_t + w_t$, where $A_K = A+BK$, we have
\begin{align}
    x_t = A_K^t x + \sum_{\tau=0}^{t-1} A_K^{t-1-\tau} w_{\tau} .
\end{align}
Hence,
\begin{align}\label{eq:thm:t1}
    \mathbb{E}[ \left\| x_t\right\|^2] 
    & = \mathbb{E}\Big[ \left\|A_K^t x + \sum_{\tau=0}^{t-1} A_K^{t-1-\tau} w_{\tau}  \right\|^2\Big]\nonumber \\
    &\leq \mathbb{E}\Big[ \left\|A_K^t x\right\|^2 +  \left\|\sum_{\tau=0}^{t-1} A_K^{t-1-\tau} w_{\tau}  \right\|^2 \nonumber \\
    &\quad + 2 \left\|A_K^t x\right\|  \left\|\sum_{\tau=0}^{t-1} A_K^{t-1-\tau} w_{\tau}  \right\| \Big] \nonumber \\
    &\leq \rho_K^{2t} \left\| x\right\|^2 +\mathbb{E}\Big[ \left\|\sum_{\tau=0}^{t-1} A_K^{t-1-\tau} w_{\tau}  \right\|^2 \nonumber \\
    &\quad + 2\rho_K^t \left\| x\right\|\left\|\sum_{\tau=0}^{t-1} A_K^{t-1-\tau} w_{\tau}  \right\| \Big],
\end{align}
where the last inequality follows from $\left\| A_K^t x\right\| \leq \left\| A_K^t \right\| \left\| x \right\| \leq \rho_K^t \left\| x\right\|$.
Further, we have
\begin{align}\label{eq:thm:t1.1}
    &\mathbb{E}\left[\left\| \sum_{\tau=0}^{t-1} A_K^{t-1-\tau} w_{\tau}  \right\|\right] \leq \mathbb{E} \left[ \sum_{\tau=0}^{t-1} \left\| A_K^{t-1-\tau}\right\| \left\|w_{\tau}  \right\| \right] \nonumber \\
    &\leq \sigma \sum_{\tau=0}^{t-1} \left\| A_K^{t-1-\tau}\right\| \leq \sigma \sum_{\tau=0}^{t-1} \rho_K^{t-1-\tau} \leq \frac{\sigma}{1-\rho_K},
\end{align}
where the first inequality follows from the Cauchy–Schwarz inequality,
the second inequality follows from  $\mathbb{E}^2 [\left\| w_k \right\|] \leq \mathbb{E}[\left\| w_k \right\|^2 ] \leq \sigma^2 $, and the third inequality follows from $\left\|A_K^{t+N+1} \right\|\leq (\left\|A_K \right\|)^{t+N+1} \leq \rho_K^{t+N+1}$.
Further,
\begin{align}\label{eq:thm:t1.2}
    &\mathbb{E}\left[\left\| \sum_{\tau=0}^{t-1} A_K^{t-1-\tau} w_{\tau}  \right\|^2\right]\nonumber \\
    &=\mathbb{E}\left[ \sum_{\tau=0}^{t-1} w_{\tau}^{\rm{T}} (A_K^{t-1-\tau})^{\rm{T}} A_K^{t-1-\tau} w_{\tau}  \right]\nonumber \\
    &\leq \mathbb{E}\left[ \sum_{\tau=0}^{t-1} \left\| (A_K^{t-1-\tau})^{\rm{T}} A_K^{t-1-\tau} \right\| \left\| w_{\tau}\right\|^2 \right] \nonumber \\
    & \leq \sigma^2 \sum_{\tau=0}^{t-1} \left\| (A_K^{t-1-\tau})^{\rm{T}} A_K^{t-1-\tau} \right\| \nonumber \\
    &\leq \sigma^2 \sum_{\tau=0}^{t-1} \rho_K^{2(t-1-\tau)} \leq \frac{\sigma^2}{1-\rho_K^2},
\end{align}
where the first equality follows from the fact that the random variables $w_{\tau}$, $\tau\in \mathbb{N}$, are i.i.d. and with zero mean. The first inequality follows from the Cauchy-Schwarz inequality and the third inequality follows from $\left\| (A_K^{t-1-\tau})^{\rm{T}} A_K^{t-1-\tau} \right\| \leq \left\| (A_K^{t-1-\tau})^{\rm{T}}\right\| \left\|A_K^{t-1-\tau} \right\| \leq \rho_K^{2(t-1-\tau)} $.
Substituting \eqref{eq:thm:t1.1} and \eqref{eq:thm:t1.2} into \eqref{eq:thm:t1}, we have
\begin{align}\label{eq:thm:t2}
    &\mathbb{E}[ \left\| x_t\right\|^2] \leq \rho_K^{2t} \left\| x\right\|^2 + \frac{\sigma^2}{1-\rho_K^2} +  \frac{2\rho_K^t \left\| x\right\|\sigma}{1-\rho_K}.
\end{align}
Substituting \eqref{eq:thm:t2} into \eqref{eq:thm:t0}, we have
\begin{align}\label{eq:thm:r4}
    &\mathbb{E}\big[|Z_T|\big] \nonumber \\
    &\leq \left\|Q + K^{\rm{T}} R K \right\| \sum_{t=T+1}^{\infty} \gamma^t \Big(  \rho_K^{2t} \left\| x\right\|^2 + \frac{\sigma^2}{1-\rho_K^2} \nonumber \\
    &\quad +  \frac{2\rho_K^t \left\| x\right\|\sigma}{1-\rho_K} \Big) \nonumber \\
    &\leq \left\|Q + K^{\rm{T}} R K \right\| \Big( \frac{\gamma^{T+1} \rho_K^{2(T+1)} \left\| x\right\|^2}{1-\gamma \rho_K^2} \nonumber \\
    &\quad +  \frac{2 \gamma^{T+1} \rho_K^{T+1} \left\|x\right\| \sigma }{(1-\rho_K)(1-\gamma \rho_K)}  +  \frac{\gamma^{T+1} \sigma^2}{(1-\gamma)(1-\rho_K^2)}\Big) \nonumber \\
    &  = \left\| Q_K\right\| \gamma^{T+1}(c_1 \rho_K^{2(T+1)}  + c_2 \rho_K^{T+1}  + c_3).
\end{align}
Combining \eqref{eq:thm:r1}, \eqref{eq:thm:r2}, \eqref{eq:thm:r3} and \eqref{eq:thm:r4}, it gives
\begin{align*}
    &\sup_z | \hat{F}^{K,T}_M(z) - F^K(z)| \nonumber \\
    \leq & \sup_z | \hat{F}^{K,T}_M(z) - F^{K,T}(z)| + \sup_z| F^{K,T}(z) - F^K(z)|\nonumber \\
    \leq &  f_{\max} \left\|Q_K \right\| \gamma^{T+1}(c_1 \rho_K^{2(T+1)}  + c_2 \rho_K^{T+1}  + c_3) +\sqrt{\frac{\ln(1 /\delta)}{2M}} ,
\end{align*}
which completes the proof.



\bibliography{bibfile}
\bibliographystyle{unsrt}

\end{document}